\title[Mandelbrot cascades and nonlinear smoothing
transforms]{Mandelbrot cascades on random weighted trees and nonlinear
  smoothing transforms}
\author[Barral and Peyri\`ere]
{Julien Barral$^{*}$ \and Jacques Peyri\`ere$^{\dag\ddag}$}
\thanks{$^*$ LAGA (UMR 7539), D\'epartement de Math\'ematiques,
  Institut Galil\'ee, Universit\'e Paris~13, Sorbonne Paris Cit\'e, 99
  avenue Jean-Baptiste Cl\'ement, 93430 Villetaneuse,
  France. \texttt{barral@math.univ-paris13.fr}}
\thanks{$^\dag$ Department of Mathematics and Systems Science, Beihang
  University, XueYuan Road No.37, HaiDian District, BeiJing, P.\ R.\
  China.}%
  \thanks{$^\ddag$ Universit\'e Paris-Sud, Math\'ematique b\^at.\ 425,
    CNRS UMR 8628, 91405 Orsay Cedex,
    France. \texttt{Jacques.Peyriere@math.u-psud.fr}}
\keywords{Multiplicative cascades, Mandelbrot martingales, smoothing
  transformations, dynamical systems, central limit theorem, Gaussian
  processes, Random fractals, Wasserstein distance, Galton-Watson
  tree}
\subjclass[2000]{37C99; 60F05, 60F17; 60G15, 60G17, 60G42}
\newtheorem{theorem}{Theorem}
\newtheorem{lemma}[theorem]{Lemma}
\newtheorem{proposition}[theorem]{Proposition}
\newtheorem{corollary}[theorem]{Corollary}
\theoremstyle{definition}
\theoremstyle{remark}
\newtheorem{remark}[theorem]{Remark}
\newtheorem{remarks}[theorem]{\textbf{Remarks}}
\newcommand{\abs}[1]{\lvert #1\rvert}
\newcommand{\dif}{\mathrm{d}}
\newcommand{\moment}{\mathbf{m}}
\newcommand{\T}{\mathsf{T}}
\newcommand{\smooth}{\mathsf{S}}
\newcommand{\M}{\mathsf{M}}
\newcommand{\ind}{{\mathbf{1}}}
\newcommand{\alphabet}{\mathscr{A}}
\newcommand{\e}{\mathrm{e}}
\newcommand{\mi}{\mathrm{i}}
\DeclareMathOperator{\esp}{\mathbb{E}} 
\DeclareMathOperator{\Proba}{\mathbb{P}}
\DeclareMathOperator{\esssup}{\mathrm{ess\,sup}}
\DeclareMathOperator{\var}{\mathrm{Var}}
\begin{document}

\begin{abstract}
  We consider complex Mandelbrot multiplicative cascades on a random
  weigh\-ted tree. Under suitable assumptions, this yields a dynamics
  $\T$ on laws invariant by random weighted means (the so called fixed
  points of smoothing transformations) and which have a finite moment
  of order 2. Moreover, we can exhibit two main behaviors: If the
  weights are conservative, i.e., sum up to~1 almost surely, we find a
  domain for the initial law $\mu$ such that a non-standard
  (functional) central limit theorem is valid for the orbit
  $(\T^n\mu)_{n\ge 0}$ (this completes in a non trivial way our
  previous result in the case of non-negative Mandelbrot cascades on a
  regular tree). If the weights are non conservative, we find a domain
  for the initial law $\mu$ over which $(\T^n\mu)_{n\ge 0}$ converges
  to the law of a non trivial random variable whose law turns out to
  be a fixed point of a quadratic smoothing transformation, which
  naturally extends the usual notion of (linear) smoothing
  transformation; moreover, this limit law can be built as the limit
  of a non-negative martingale. Also, the dynamics can be modified to
  build fixed points of higher degree smoothing transformations.
\end{abstract}

\maketitle
\section{Introduction}\label{sec1}
This work is a continuation of our study of iteration of Mandelbrot
cascades~\cite{BPW} in which we brought to evidence central limit
theorems. The cascades then took place on a homogeneous tree. This
time we consider such cascades on a random weighted tree, which is a
kind of random scenery. This is not a mere generalization. This
setting leads to new and somewhat unexpected phenomena. In particular
it provides solutions to a non-linear equation whose unknown is a
probability distribution, as well as a non-standard central limit
theorem.

To be more specific, let $N$ be a non-negative integer valued random
variable and $a=(a_j)_{j\ge 1}$ be a sequence of non-negative random
variables such that $$\esp \sum_{j\ge1} a_j = 1.$$

We assume that $a$ and $N$ are independent.

Let ${\mathscr L}W$ stand for the law of the random
variable~$W$. Consider the following map
\begin{equation*}
  \mu \longmapsto {\mathscr L} \sum_{j\ge 1} a_j \prod_{0\le k\le N} W_k(j),
\end{equation*}
where $\mu$ is a probability measure on~${\mathbb C}$, all variables
$W_k(j)$ are distributed according to~$\mu$, independent, and
independent of $a$ and~$N$. Under suitable assumptions on $a$ and $N$,
we prove that this mapping has a unique non trivial fixed point in a certain
domain defined by inequalities on the second moment of~$\mu$. In other
terms this solves the equation 
\begin{equation}\label{NL}
\mu = {\mathscr L} \sum_{j\ge 1} a_j \prod_{0\le k\le N} W_k(j).
\end{equation}
If $N$ is the contant~0, $\mu$ is a fixed point of the well known linear smoothing transform associated with $a$. Otherwise,  equation \eqref{NL} is nonlinear, and in case $N$ is the constant~1, it reduces to the quadratic
equation~\eqref{WW}.  If $\sum a_j=1$ with probability~$1$, the so
called conservative case, this fixed point is the Dirac mass at~$1$,
whereas in the non conservative case the fixed point is non
trivial. Moreover, the conservative case gives rise to a non standard
central limit theorem.

Non trivial solutions to equation \eqref{NL}  are obtained by iterating Mandelbrot cascades in a random scenery. This setting is explained in Section~\ref{cascades}. Sections~3 to~6 deal with the case when $N$ is the constant ~1. Indeed we prefer to present calculations and ideas in this particular case. Then, in Section~7, we treat the
general case. In Section~\ref{sec8}, we  obtain a functional central limit theorem in case $N=1$.
 
\section{Mandelbrot cascades}\label{cascades}

Consider the tree $\mathscr{T}=\bigcup_{n\ge 0}\mathbb{N}_+^n$ whose
root is the only element $\epsilon$ of $\mathbb{N}_+^0$. Endowed with
concatenation, denoted by juxtaposition except in Section~\ref{sec8}, ${\mathscr T}$ is also
a monoid whose identity is~$\epsilon$. Its elements are considered as
words: If~$w=x_1x_2\cdots x_n$, we set $|w|=n$, $w_j=x_j$, and $w|_{k}
= x_1\cdots x_k$ (with $w|_0=\epsilon$). The set $\mathbb{N}_+^n$ will sometimes be denoted by $\mathscr{T}_n$. 

We are given once for all a sequence $a=(a_n)_{n\ge 1}$ of non-negative random
variables such that $\displaystyle \esp\sum_{j\ge 1} a_j = 1$ and
$\displaystyle \esp\Big(\sum_{j\ge 1} a_j\Big)^2 <\infty$. We exclude
that $a_j\in\{0,1\}$ for all $j\ge 1$ almost surely. The law of $a$ will be used to define a random scenery in which Mandelbrot construction will be done as explained in the next subsection.

We set
\begin{equation*}\label{notations}
b = \frac{\esp \left( \sum a_j\right)^2}{\esp \sum a_j^2} \text{\quad
  and \quad} q = \frac{1}{\esp \left( \sum a_j\right)^2}.
\end{equation*}
Observe that $b\ge 1$, $0< q \le 1$, and that $q=1$ if and only if
$\sum_{j\ge 1} a_j = 1$ with probability~1. As already said, we refer
to this last case as the \emph{conservative case}.

\subsection{Scalar cascades in a random scenery}

Now, if $W$ is an integrable complex random variable of expectation~1,
we consider two independent sequences $\bigl(W(w)\bigr)_{w\in
  {\mathscr T}}$ and $\bigl(a(w)\bigr)_{w\in {\mathscr T}}$ of
independent variables equidistributed with~$W$ or~$a$, and define
\begin{equation*}
Y_n = \sum_{w=j_1j_2\dots j_n\in {\mathscr T}_{n}} \prod_{k=0}^{n-1}
a_{j_{k+1}}(w|_{k})W(w|_{k+1}). 
\end{equation*}
This is a Mandelbrot martingale.  We have
\begin{equation}\label{recur0}
Y_{n+1} = \sum_{j\ge 1} a_jW(j)Y_n(j),
\end{equation}
where the variables $Y_{n-1}(j)$ are defined as $Y_{n-1}$ but starting
from $j$ as a root, and all the variables in the sum are
independent and independent of~$a$. Notice that $Y_n(j)$ has the same
distribution as $Y_n$ for all~$j\ge 1$.
From this relation we obtain the following equality.
\begin{equation*}
\esp |Y_{n+1}|^2 = \esp |W|^2 \esp |Y_{n}|^2\esp \sum_{j\ge 1} a_j^2 + \esp
\sum_{i\ne j} a_ia_j
\end{equation*}
which can be rewritten as
\begin{equation}\label{recur}
bq\esp |Y_{n+1}|^2 = \esp |W|^2 \esp |Y_{n}|^2+b-1.
\end{equation}
This means that this martingale is bounded in
$L^2$ if and only if 
\begin{equation}\label{non-deg}
\esp |W|^2  < bq.
\end{equation}
If it is so, which we assume from now on, let~$Y$ stand for the limit
of this martingale. It results from~\eqref{recur0} that~$Y$ fulfills
the following equation.
\begin{equation}\label{eqfonc}
Y = \sum_{j\ge 1} a_jW(j)Y(j),
\end{equation}
where the variables $Y(i)$ are equidistributed with~$Y$, and all the
variables~$W(j)$ and~$Y(j)$ in the sum are independent and independent
of~$a$. Thus $Y$ is a special fixed point of the so-called smoothing
transformation $\smooth_1$ which to a given probability distribution
$\mu$ on $\mathbb C$ associates
$$
\smooth_1(\mu) = {\mathscr L}\sum_{j\ge 1} a_jW(j)Z(j),
$$
where the $Z_j$ are independent, distributed according to $\mu$, and
independent of $a$ and the $W(j)$ (${\mathscr L}X$ stands for the
probability distribution of $X$). The fixed points of such
transformations have been studied for a long time, especially in the
context of model for turbulence and branching processes, and much is
known about their structure
\cite{mandel2,KP,Biggins1,DuLi,gu90,BiKy,liu98,Liu,BiKy3,AlBiMe,AlMe12,AlMe}.

Also, we have
\begin{equation}\label{mmnt4}
\esp |Y|^2 = \frac{b-1}{bq-\esp |W|^2}.
\end{equation}

Recall that, when $W$ is a non-negative variable, for $p>1$ we have
\begin{equation}\label{liu}
\esp Y^p<\infty\Longleftrightarrow 
\begin{cases}
\esp\Big (\sum_{j\ge 1} a_jW(j)\Big )^p < \infty,\\
\esp\Big (\sum_{j\ge 1}  a_j^p W(j)^p\Big )<1.
\end{cases}
\end{equation}
This equivalence follows from Liu's generalisation~\cite{Liu} of the
condition found in~\cite{KP} for the finiteness of moments of order
larger than 1 of non-degenerate Mandelbrot cascades on regular trees.
\medskip

We denote by~$\T$ the operator which leads from the distribution of
$W$ to the one of $Y$. From time to time we shall make the following
abuse of notation: $Y=\T W$.

We wish to iterate $\T$ and study its dynamics. In \cite{BPW}, we
considered the case where $b\ge 3$ is a fixed integer, $a_j=b^{-1}$ if
$1\le j\le b$ and 0 otherwise, and $W$ is non-negative. We proved that
if one starts with $W_0$ such that $1\le \esp W_0^2<b-1$, one can
infinitely iterate $\T$, and $\T^n W_0$ converges in law to
$\delta_1$, the unit Dirac mass at~1. Then, under additional
assumptions on $W_0$, we proved that after centering $\T^n W_0$ and
normalizing it by the resulting standard deviations one gets a
sequence of probability distributions converging to the standard
normal law; moreover, this result had a functional counterpart in
which the limit process was obtained as the limit of an ``additive''
cascade.  As we will see, in the present extended framework, the
situation exhibits new features. First, when $q=1$, i.e.  $\sum_{j\ge
  1} a_j = 1$ with probability~1, there is a more general non standard
central limit theorem: the limit distribution is that of a complex
centered Gaussian variable $\xi$ multiplied by $\sqrt{U}$, where $U$
is independent of~$\xi$ and is the limit of a non degenerate
Mandelbrot martingale built on $\bigcup_{n\ge 1}
\{0,1\}^n\times\mathbb N_+^n$ rather than on $\mathbb N_+^n$, which is
an unexpected fact (Theorems~\ref{thm13} and~\ref{thm16}). This result
has a functional counterpart too (Theorem~\ref{CLTFunc}), the limit
process being the limit of a mixture between additive and
multiplicative cascade. Also, when $q<1$, we find conditions under
which there exists a non trivial fixed point of $\T$ (in the sense
that it differs from $\delta_1$) with a non trivial basin of
attraction. It turns out that this fixed point is by construction a
fixed point of the following quadratic smoothing transformation
$\smooth_2$: for a probability measure $\mu$ on~$\mathbb C$,
\begin{equation}\label{smooth2}
\smooth_2(\mu) = {\mathscr L}\left(\sum_{j\ge 1} a_j W(j)\widetilde
  W(j)\right ),
\end{equation}
where the random variables $\{W(j),\widetilde W(j)\}_{j\ge 1}$ are
mutually independent, distributed according to $\mu$, and independent
of $a$. We will identify this fixed point as the probability
distribution of the limit of a non-negative martingale
(Theorems~\ref{firstThm} and \ref{thm-3}). 

Next subsection introduces some useful preliminary facts about the
mapping $\T$.

\subsection{Simultaneous cascades}\label{simult}

This time we are given a random vector $(W,W')$ such that $\esp W =
\esp W' = 1$, $\esp |W|^2 < bq$, and $\esp |W'|^2 < bq$. We consider a
family $\bigl\{\bigl(W(w),W'(w)\bigr)\bigr\}_{w\in \mathscr{T}}$ of
independent copies of $(W,W')$, which are independent of all the
$a(w)$, and perform the same construction as previously: one gets
variables $Y_n$ and $Y_n'$ and their limits~$Y$ and~$Y'$.

Thus $\T$ extends naturally to an operation $\T^{(2)}$ mapping the
distribution of $(W,W')$ to that of $(Y,Y')$.  
\medskip

Let us perform a few computations.
\medskip

Due to~\eqref{eqfonc}
\begin{eqnarray*}
\esp \overline{Y}Y' &=&
\sum_{i,j\ge 1} \esp \Bigl(a_ia_j\overline{W(i)}W'(j)
\overline{Y(i)}Y'(j)\Bigr)\nonumber\\ 
&=& \esp (\overline{W}W')\esp (\overline{Y}Y')\esp \sum_{j\ge 1}a_j^2 +
\sum_{i\ne j} a_ia_j
\end{eqnarray*}
hence
\begin{equation}\label{mmnt3}
\esp \overline{Y}Y' = \frac{b-1}{bq - \esp \overline{W}W'}.
\end{equation}
\medskip

Again, due to~\eqref{eqfonc}

\begin{equation*}
Y-Y' = \sum_{i\ge 1} a_i\bigl(
W(i)-W'(i)\bigr)Y(i) + \sum_{i=1} a_i\bigl(
Y(i)-Y'(i)\bigr)W'(i),\\
\end{equation*}
so
\begin{multline*}
bq \esp |Y-Y'|^2 = \esp |W-W'|^2 \esp |Y|^2 + \esp |Y-Y'|^2 \esp
|W'|^2\\
+ 2\Re \esp\bigl(|(\overline{W}-\overline{W'})W'| \bigr)\esp\bigl(
|\overline{Y} (Y-Y')|\bigr).
\end{multline*}
and
\begin{equation}\label{continu}
\sqrt{bq}\ \|Y-Y'\|_2 \le \|W-W'\|_2\|Y\|_2+\|Y-Y'\|_2\|W'\|_2.
\end{equation}

\subsection{Examples}\label{examples}

The original Mandelbrot cascades correspond to the following choice
of~$a$:
\begin{equation*}
a_j = 
\begin{cases}
b^{-1},& \text{\ if\ }1\le j\le b,\\0,& \text{\ if\ } j>b.
\end{cases}
\end{equation*}

One also can associate $a$ with a Galton-Watson process. More
precisely, let $J$ be an integer valued random variable, and define
$q = \Proba \{J>0\}$ and 
\begin{equation*}
a_j=
\begin{cases}
(qJ)^{-1},& \text{\ if\ }1\le j\le J,\\0,& \text{\ if\ } j>J.
\end{cases}
\end{equation*}
In this context, it is not difficult to see that
\begin{equation*}
 b^{-1} = \esp(J^{-1} \mid
J\ne 0).
\end{equation*}
We also use the following notation: 
\begin{equation}\label{GW}
b_k = \frac1{\esp\bigl(J^{-k} \mid J\ne0\bigr)}.
\end{equation}

Notice that
\begin{equation}\label{bs}
b_1\le b_k\le b_1^k,
\end{equation}
due to H\"older inequality.

\section{A dynamical system}\label{DYN}

We wish to iterate~$\T$. So, we have to ensure that $\esp |Y|^2 <
bq$. In view of~\eqref{mmnt4} this leads first to consider the
iterates of the homography
\begin{equation}\label{homography}\varphi(x)=\frac{b-1}{bq-x}.\end{equation}

\subsection{Study of $\varphi$}

There are two cases.
\begin{enumerate}
\item The mapping $\varphi$ has no fixed point. Then one of the
  iterates of any starting point~$x_0<bq$ is larger than $bq$.

\item The mapping $\varphi$ has two real fixed points $\alpha\le
  \beta$.
  Starting from $x_0< \alpha$ the sequence of its iterates increases
  towards $\alpha$. Starting from $x_0\in (\alpha, \beta)$ the
  sequence decreases towards $\alpha$. Starting from $x_0> \beta$
  leads, after some iterations, to values larger than $bq$.
\end{enumerate}

This means that in case~(1) there is no hope to indefinitely
iterate~$\T$.  \medskip

As we wish to start from $x_0= \esp |W_0|^2>1$, the only interesting
case is~$\beta\ge 1$.

A first way, in case~$\beta\ge 1$, to see that $\alpha\ge 1$ is to
proceed as follows: starting with~$W=W_0$ such that $\esp |W_0|^2<
\beta$ (the case $\beta=1$ presents no interest), we get~$W_1=Y$ such
that~$\esp |W_1|^2< \beta$. So, by starting with $W_1$ instead
of~$W_0$ we get~$W_2$ which still fulfills the non-degeneracy
condition. And so on \dots As previously said, $\lim_{n\to \infty}\esp
|W_n|^2= \alpha$. But, as $\esp W_n=1$, one has $\alpha\ge 1$.

This homography, $\varphi$, has two real fixed points, $\alpha\le\beta$,
the roots of the polynomial $p(x) = x^2-bqx+b-1$, when $\displaystyle
q \ge \frac{2\sqrt{b-1}}{b}$.

Since $p(0) = p(bq) > 0$, $p(1) = b(1-q) \ge 0$, and $\alpha+\beta =
bq$, either $0<\alpha\le\beta\le 1$ or $1\le \alpha\le \beta\le
bq-1$. The first case is of no interest to us because we should start
iterating from $x_0>1$. In the first case, we have $\displaystyle
2\sqrt{b-1} \le bq\le 2$, which means~$b\le 2$. In the second case, we
have $b\ge bq = \alpha+\beta\ge 2$.  \medskip

Let us now examine the behavior of~$\varphi$ under iteration when there
are real fixed points. Suppose first $\alpha< \beta$. By using the
conservation of the cross-ratio we get
\begin{equation}\label{birap0}
  \frac{\varphi(x)-\alpha}{\varphi(x)-\beta} =
  \bigl(\varphi(x),\infty,\alpha,\beta\bigr) =
  \bigl(x,bq,\alpha,\beta\bigr) =
  \frac{\alpha}{\beta}\,\frac{x-\alpha}{x-\beta},
\end{equation}
which implies that, if $x_0<\beta$ and $x_{n+1}=\varphi(x_n)$, one has
\begin{equation}\label{birap}
x_n-\alpha=\frac{(\alpha/\beta)^n}{1-(\alpha/\beta)^n}\,
\frac{\beta-\alpha}{\beta-x_0}\,(x_0-\alpha).
\end{equation}
\bigskip

Suppose now that~$\alpha=\beta$, which means $\displaystyle b =
\frac{2(1+\sqrt{1-q^2})}{q^2}$. Then
\begin{equation*}
  \frac{\alpha-\varphi(x)}{\alpha}=\bigl( \varphi(x),0,\alpha,\infty\bigr) = 
  \bigl( x,\infty,\alpha,bq\bigr) = \frac{\alpha-x}{2\alpha-x},
\end{equation*}
gives
\begin{equation*}
\frac{1}{\alpha-\varphi(x)} = \frac{1}{\alpha}+\frac{1}{\alpha-x}.
\end{equation*}
It follows that if $x_0 <\alpha$,
\begin{equation}\label{amoinsxn}
\alpha-x_n =
\frac{\alpha(\alpha-x_0)}{n(\alpha-x_0)+1} .
\end{equation}
\medskip

The case $b=2$ is of no interest. So, \emph{\bf from now on, we
  suppose $b>2$}.  \medskip

Observe that~$\alpha\ge 1$ and that $\alpha=1$ if and only if~$q=1$.

\subsection{A dynamical system}

Let $\mathcal{P}$ be the set of Borel probability measures on
$\mathbb{C}$, and $\mathcal P^{(2)}$ the set of Borel probability
measures on $\mathbb{C}^2$.

If $\mu\in\mathcal{P}$ and $p> 0$, we denote by
$\moment_p(\mu)$ the moment of order $p$ of $\mu$, i.e.,
$$
\moment_p(\mu)=\int_{{\mathbb C}}|x|^p \,\mu(\dif x).
$$
Then let $\mathcal{P}_1$ be the set of elements of~$\mathcal{P}$ with
finite first moment and expectation~1:
$$
\mathcal{P}_1=\{\mu\in\mathcal{P}\ :\ \moment_1(\mu)< \infty,\
\int_{{\mathbb C}}
z\,\mu(\dif z) =1\}.
$$

For~$\gamma\ge1$ we set
$$ {\mathscr P}_\gamma = \bigl\{\mu\in{\mathcal P}_1\ :\ 1\le
  \moment_2(\mu)\le \gamma \bigr\}.
$$

We also set
$${\mathscr P}^{(2)}_\gamma = \bigl\{\rho\in{\mathcal P}^{(2)}\ :
\rho\circ\pi_1^{-1},\rho\circ\pi_2^{-1}\in {\mathscr P}_\gamma \bigr\}.
$$

The set ${\mathscr P}_\gamma $ is endowed with
the Wasserstein distance (see~\cite{Villani}, p.~77 sqq)
$$
\dif_{W,2}(\mu,\mu')^2 = \inf\left\{\int_{{\mathbb C}^2}|x-y|^2\,
  \dif\rho: \rho\in \mathcal P^{(2)},\ \rho\circ\pi_1^{-1}= \mu,\
  \rho\circ\pi_2^{-1}=\mu'\right\},
$$
where $\pi_1$ and $\pi_2$ stand for the canonical projections on the
first and second coordinates. The space
$\bigl(\mathcal{P}_\gamma,\dif_{W,2}(\mu,\mu')\bigr)$ is complete, and
convergence in $(\mathcal{P},\dif_{W,2}(\mu,\mu'))$ implies
convergence in distribution.
 
When $\beta>\alpha\ge 1$, for any $\gamma\in [\alpha,\beta]$, the
set~${\mathscr P}_\gamma$ is stable under operation~$\T$. This means
that we can indefinitely iterate the process on~${\mathscr
  P}_\gamma$. Similarly, ${\mathscr P}^{(2)}_\gamma$ is stable under
operation~$\T^{(2)}$ defined in Section~\ref{simult}.
  
If $\mu\in {\mathscr P}_\gamma$, due to (\ref{eqfonc}), we can
associate with each $n\ge 0$ a random variable $W_{n+1}$ as well as a
copie of $(a_j)_{j\ge 1}$ and two sequences of random variables
$(W_n(k))_{k\ge 1}$ and $(W_{n+1}(k))_{k\ge 1}$, such that the random
variables $a$, $W_n(1),W_{n+1}(1),W_n(2),W_{n+1}(2),\ldots$ are
independent,
\begin{equation}\label{foncn}
W_{n+1} = \sum_{j \ge 1} a_jW_{n}(j)W_{n+1}(j),
\end{equation}
$\T^n\!\mu $ is the probability distribution of $W_n$ and $W_n(k)$ for
every~$k\ge 1$, and $\T^{n+1}\!\mu $ is the probability distribution
of $W_{n+1}$ and $W_{n+1}(k)$ for every $k\ge 1$. One has to be aware
that, if we also write Equation~\eqref{foncn} for $W_{n+2}$, the
variables $W_{n+1}(j)$ which appear in both formula need not be the
same.

More generally, if $\rho\in {\mathscr P}^{(2)}_\gamma$, we can
associate with each $n\ge 0$ a random vector $(W_{n+1},W_{n+1}')$ as
well as a copy of $N$ and two sequences of random vectors
$\bigl((W_n(k),W'_n(k))\bigr)_{k\ge 1}$ and
$\bigl((W_{n+1}(k),W'_{n+1}(k))\bigr)_{k\ge 1}$, such that the random
vectors $a$, $(W_n(1),\,W'_n(1)),\,(W_{n+1}(1),\,W'_{n+1}(1)),\,
(W_n(2),W'_n(2)),\, (W_{n+1}(2),\,W'_{n+1}(2)),\ldots$ are
independent, and
\begin{equation}\label{foncn-1}
\begin{cases}\displaystyle W_{n+1}=
\sum_{j \ge 1}a_jW_{n}(j)W_{n+1}(j),\\ \displaystyle
W'_{n+1} = \sum_{j \ge 1}a_jW'_{n}(j)W'_{n+1}(j),
\end{cases}
\end{equation}
where $(\T^{(2)})^n\!\rho $ is the probability distribution of $(W_n,W_n')$
and $(W_n(k),W'_n(k))$ for every~$k\ge 1$, and $\T^{(2),n+1}\!\rho$ is
the probability distribution of $(W_{n+1},W'_{n+1})$ and
$(W_{n+1}(k),W'_{n+1}(k))$ for every $k\ge 1$.

\subsection{Existence of fixed points for $\T$ (or
  $\smooth_2$)}\label{existence}
It turns out that, exactly like in the case of the classical linear
smoothing transformation, it is possible to build special fixed points
of $\smooth_2$, and hence of $\T$, as limit of a martingale whose
successive terms are distributed according to $\smooth_2^n(\delta_1)$.

Let $\{a(w,m)\}_{\substack{n\ge 0\\(w,m)\in \{0,1\}^n\times \mathscr
    {T}_n}}$ be a sequence of independent copies of $a$. For all $(w,m)$ we
define $Z_1(w,m)=\sum_{j\ge 1}a_j(w,m)$, which is distributed
according to $\smooth_2(\delta_1)$. Then we define recursively, for
all $n\ge 2$ and $(w,m)$,
\begin{equation}\label{znmw}
Z_n(w,m)=\sum_{j\ge 1} a_j(w,m) Z_{n-1}(wj,m0)Z_{n-1}(wj,m1),
\end{equation}
which, as easily seen by induction, is distributed according to
$\smooth_2^n(\delta_1)$.

Set $Z_n=Z_n(\epsilon,\epsilon)$. The sequence $(Z_n)_{n\ge 1}$ is a
non-negative martingale with respect to the filtration\ \ $\mathcal
G_n=\sigma\Bigl(a(w,m): (w,m)\in \bigcup_{k=0}^{n-1}\{0,1\}^k\times
\mathscr {T}_k\Bigr)$, $n\ge 1$. To~see this, define\ \  ${\mathcal
  G}_n(w,m)= \sigma\Bigl(a(ww',mm'): (w',m')\in
\bigcup_{k=0}^{n-1}\{0,1\}^k\times \mathscr {T}_k\Bigr)$. We have
\begin{align*}
  &\esp(Z_2(w,m) \mid {\mathcal G}_1(w,m))\\&=\sum_{j_1\ge 1}a_{j_1}(w,m)
  \esp (Z_{1}(wj_1,m0) \mid {\mathcal G}_1(w,m))\esp(Z_{1}(wj_1,m1) \mid \mathcal
  G_1(w,m))\\&= \sum_{j_1\ge 1}a_{j_1}(w,m) \esp\left (\sum_{j_2\ge 1}
    a_{j_2}(wj_1,m0)\right) \esp\left (\sum_{j_2\ge 1}
    a_{j_2}(wj_1,m1)\right)\\&= \sum_{j_1\ge 1}a_{j_1}(w,m)=Z_1(w,m),
\end{align*}
Then, suppose that for a given $n\ge 3$, for all $(w,m)$ we have 
$$
\esp
(Z_{n-1}(w,m) \mid {\mathcal  G}_{n-2}(w,m))=Z_{n-2}(w,m).
$$ 
Using \eqref{znmw}
and the independence between random variables, we get
\begin{align*}
  \esp(Z_n(mw) \mid {\mathcal G}_{n-1}(w,m))&=\sum_{j\ge 1} a_j(w,m)
  \prod_{\epsilon\in\{0,1\}}\esp(Z_{n-1}(wj,m\epsilon) \mid \mathcal
  G_{n-1}(w,m))\\ 
  &=\sum_{j\ge 1} a_j(w,m)
  \prod_{\epsilon\in\{0,1\}}\esp(Z_{n-1}(wj,m\epsilon)|\mathcal
  G_{n-2}(wj,m\epsilon))\\
  &=\sum_{j\ge 1} a_j(w,m) Z_{n-2}(wj,m0)Z_{n-2}(wj,m1)=Z_{n-1}(w,m).
\end{align*}

Equation  \eqref{znmw} also yields 
$$
\esp Z_n^2 = \left (\esp\sum_{j\ge 1} a_j^2\right) \bigl(\esp
Z_{n-1}^2\bigr)^2 + \esp\sum_{i\neq j} a_ia_j=\frac{1}{bq}\bigl(\esp
Z_{n-1}^2\bigr)^2 + \frac{b-1}{bq}
$$
for all $n\ge 1$. Notice that the mapping $x \mapsto
\frac{1}{bq}x^2+\frac{b-1}{bq}$ has exactly the same fixed points as
$\varphi$, namely $\alpha$ and $\beta$. Since $\esp Z_0^2=1$ and $Z_n$
satisfies the recursion~\eqref{znmw}, we get the following result.

\begin{theorem}\label{firstThm}
  Suppose that $\alpha\ge 1$. The martingale $(Z_n)_{n\ge 1}$ is
  bounded in $L^2$, and converges to a limit $Z$ such that
  $\esp(Z^2)=\alpha$. Hence the probability distribution of~$Z$,
  denoted by~$\M$, is a fixed point of~$\smooth_2$ and~$\T$, and
  $\moment_2(M)=\alpha$.
\end{theorem}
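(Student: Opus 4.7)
The plan is three-fold: bound the martingale in $L^2$ by analyzing a scalar recursion, apply Doob's theorem for the convergence, and then pass to the limit in the defining recursion~\eqref{znmw} to recognise the fixed-point property.

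First I would exploit the recursion $\esp Z_n^2 = f(\esp Z_{n-1}^2)$, where $f(x) = (x^2 + b-1)/(bq)$, whose fixed points are the same as those of $\varphi$, namely $\alpha\le\beta$. Since $f$ is increasing on $[0,\infty)$ and $f(x)-x=(x-\alpha)(x-\beta)/(bq)$, starting the iteration at $\esp Z_0^2=1$ (with the convention $Z_0\equiv 1$) and using the hypothesis $\alpha\ge 1$ traps the sequence $(\esp Z_n^2)$ in $[1,\alpha]$ and forces it to be monotone non-decreasing. Being bounded, it converges to the unique fixed point of $f$ in this interval, namely $\alpha$. In particular $(Z_n)$ is bounded in $L^2$, and Doob's theorem supplies almost sure and $L^2$ convergence to a limit $Z$ with $\esp Z=1$ and $\esp Z^2=\alpha$.

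Second, the same analysis applies to every auxiliary martingale $(Z_n(w,m))_{n\ge 1}$, which converges almost surely and in $L^2$ to a limit $Z(w,m)$ having the same law as $Z$. The limits $Z(j,0),Z(j,1)$, $j\ge 1$, inherit from their approximants the property of being mutually independent, independent of $a$, and each distributed according to the law $\M$ of $Z$. The plan is then to pass to the limit in
\begin{equation*}
Z_n = \sum_{j\ge 1} a_j\,Z_{n-1}(j,0)\,Z_{n-1}(j,1)
\end{equation*}
by controlling the $L^2$-norm of its difference with $\sum_{j\ge 1} a_j Z(j,0)Z(j,1)$; expanding the square and using independence reduces this to $\esp\sum_j a_j^2 = 1/(bq)$, the uniform bound $\esp Z_n^2\le\alpha$, and $\|Z_n-Z\|_2\to 0$. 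This yields $Z=\sum_{j\ge 1} a_j Z(j,0)Z(j,1)$ almost surely, which by~\eqref{smooth2} means $\smooth_2(\M)=\M$.

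Third, I would recognise the very same identity, read as $Z=\sum_{j\ge 1} a_j W(j) Y(j)$ with $W(j):=Z(j,0)$ and $Y(j):=Z(j,1)$ two independent families each i.i.d.\ with law $\M$, as the functional equation~\eqref{eqfonc} for $\T$ applied to $\M$. Since $\moment_2(\M)=\alpha<bq$ (because $\alpha+\beta=bq$ with $0<\alpha\le\beta$), $\T\M$ is well defined; iterating the identity $n$ times expresses $Z$ as the $n$-th level sum of a Mandelbrot cascade built from $\M$-distributed weights, and the $L^2$ convergence of that cascade to a variable of law $\T\M$ forces $\T\M=\M$.

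The main obstacle is the passage to the limit in the infinite series, as the variables $Z_{n-1}(j,\epsilon)$ depend on $n$ for every $j$; the key is that the bilinear structure, independence across $j$, and the $L^2$ summability $\esp\sum_j a_j^2<\infty$ combine to transform $L^2$ convergence of the individual factors into $L^2$ convergence of the series. Once this is established, identifying $\M$ as a fixed point of both $\smooth_2$ and $\T$ is routine.
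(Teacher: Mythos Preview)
Your proposal is correct and follows the paper's approach: the paper derives the same quadratic recursion for $\esp Z_n^2$, notes that its fixed points coincide with those of $\varphi$, and then states the theorem directly, leaving both the passage to the limit in~\eqref{znmw} and the identification of $\M$ as a fixed point of $\T$ implicit. Your second and third steps supply those omitted details; one small imprecision in the third step: iterating the limiting identity $n$ times actually yields $Z=\sum_{|w|=n} M_n(w)\,V(w)$ almost surely, with independent $\M$-distributed leaf factors $V(w)$, not the bare cascade sum $Y_n=\sum_{|w|=n}M_n(w)$, so to conclude $\T\M=\M$ you should add that $\|Z-Y_n\|_2^2=(\alpha-1)\bigl(\alpha/(bq)\bigr)^n\to 0$ (equivalently, $Y_n=\esp\bigl(Z\mid\sigma(M_n(w):|w|=n)\bigr)$), whence $Y_n\to Z$ in $L^2$ and $\T\M={\mathscr L}Z=\M$.
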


\subsection{Basin of attraction of $\mathbf\M$, convergence speed, and
  explosion of moments when $q<1$}

\begin{lemma}\label{sim} 
\begin{enumerate}
\item Suppose $\beta> \alpha\ge 1$, fix $\gamma\in[\alpha,\beta)$
  and $\rho\in{\mathscr P}_\gamma^{(2)}$. Let $(W_n,W'_n)$ be a
  sequence of variables distributed according to $(\T^{(2)})^n(\rho)$. Then
  $\esp |W_n-W'_n|^2 = \mathrm{O}\bigl( (\alpha/\beta)^n\bigr).$
\item If $\alpha=\beta$, fix $\rho\in{\mathscr P}_\alpha^{(2)}$. Let
  $(W_n,W'_n)$ be a sequence of variables distributed according to
  $(\T^{(2)})^n(\rho)$. Then $\esp |W_n-W'_n|^2 = \mathrm{O}(1/n).$
\end{enumerate}
\end{lemma}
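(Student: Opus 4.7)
The plan is to follow the scalar analysis of $\varphi$ carried out at the beginning of Section~\ref{DYN}, but applied simultaneously to three second-order quantities that all evolve under the \emph{same} homography $\varphi$. I would set $\sigma_n := \esp|W_n|^2$, $\sigma'_n := \esp|W'_n|^2$ and $v_n := \esp\overline{W_n}W'_n$, so that
\begin{equation*}
\esp|W_n-W'_n|^2 \;=\; \sigma_n+\sigma'_n-2\Re v_n \;=\; 2\Re(\alpha-v_n) - (\alpha-\sigma_n) - (\alpha-\sigma'_n).
\end{equation*}
A short calculation based on \eqref{foncn-1} (entirely parallel to the ones producing \eqref{mmnt4} and \eqref{mmnt3}) shows $\sigma_{n+1}=\varphi(\sigma_n)$, $\sigma'_{n+1}=\varphi(\sigma'_n)$ and $v_{n+1}=\varphi(v_n)$, where $\varphi$ is the homography \eqref{homography} extended to $\mathbb{C}\setminus\{bq\}$. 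The only auxiliary bound I need is the Cauchy--Schwarz estimate $|v_n|\le\sqrt{\sigma_n\sigma'_n}\le\gamma$, which propagates along the iteration.

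For part~(1), the algebraic identity \eqref{birap0} is valid for any $z\in\mathbb{C}\setminus\{\beta\}$, and iterating yields
\begin{equation*}
\frac{v_n-\alpha}{v_n-\beta} \;=\; \left(\frac{\alpha}{\beta}\right)^n \frac{v_0-\alpha}{v_0-\beta}.
\end{equation*}
Since $|v_0|\le\gamma<\beta$, the right-hand side is well defined and decays geometrically, and since $|v_n|\le\gamma$ keeps $|v_n-\beta|\ge\beta-\gamma$, solving for $v_n-\alpha$ gives $|v_n-\alpha|=\mathrm{O}((\alpha/\beta)^n)$. The analogous real estimates $|\sigma_n-\alpha|$, $|\sigma'_n-\alpha| = \mathrm{O}((\alpha/\beta)^n)$ are direct consequences of \eqref{birap}, and substitution into the displayed expansion of $\esp|W_n-W'_n|^2$ finishes~(1).

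For part~(2), $\alpha=\beta=\gamma$, and the identity $1/(\alpha-\varphi(z))=1/\alpha+1/(\alpha-z)$ again extends to every complex $z\ne\alpha$. Iterating it produces
\begin{equation*}
\frac{1}{\alpha-v_n} \;=\; \frac{n}{\alpha} + \frac{1}{\alpha-v_0}
\end{equation*}
(with the convention $v_n\equiv\alpha$ when $v_0=\alpha$). The crucial observation is that $\Re(\alpha-v_0)\ge\alpha-|v_0|\ge 0$, hence $\Re\bigl(1/(\alpha-v_0)\bigr)\ge 0$, and therefore $|n/\alpha+1/(\alpha-v_0)|\ge n/\alpha$, which yields $|\alpha-v_n|\le\alpha/n$. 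Together with the monotone bounds $0\le\alpha-\sigma_n,\alpha-\sigma'_n\le\alpha/n$ produced by \eqref{amoinsxn} for real trajectories starting in $[1,\alpha]$, the expansion above gives $\esp|W_n-W'_n|^2\le 2\Re(\alpha-v_n)=\mathrm{O}(1/n)$, the $\sigma_n,\sigma'_n$ contributions having the good sign.

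The one place where care is needed — the main obstacle, modest as it is — is propagating the scalar asymptotics of $\varphi^n$ from the real trajectories (as treated in \eqref{birap} and \eqref{amoinsxn}) to the complex trajectory of $(v_n)$. The Cauchy--Schwarz bound $|v_n|\le\gamma$ is what does the work in both regimes: in case~(1) it keeps $v_n$ away from the repelling fixed point $\beta$, and in case~(2) it forces $\Re(\alpha-v_0)\ge 0$, which is exactly what lets the real-case asymptotic formula pass through to complex initial data without losing the $\mathrm{O}(1/n)$ rate.
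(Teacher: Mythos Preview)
Your proof is correct and follows essentially the same approach as the paper: expand $\esp|W_n-W'_n|^2=\esp|W_n|^2+\esp|W'_n|^2-2\Re\esp\overline{W_n}W'_n$, observe that all three second-order quantities iterate under the same homography $\varphi$, and then read off the rate from \eqref{birap} or \eqref{amoinsxn}. The paper's proof is very terse and simply cites these formulas; you add the genuinely useful detail of why the complex trajectory $v_n=\esp\overline{W_n}W'_n$ is controlled, using Cauchy--Schwarz to keep $|v_n|\le\gamma$ (bounding $v_n$ away from $\beta$ in case~(1), and forcing $\Re(\alpha-v_0)\ge 0$ in case~(2)).
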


\proof If $\alpha < \beta$, Equation~\eqref{birap} tells that $\esp
|W_n|^2 = \alpha+ \mathrm{O}\bigl( (\alpha/\beta)^n\bigr),\ \esp
|W_n'|^2 = \alpha+ \mathrm{O}\bigl( (\alpha/\beta)^n\bigr)$, and $\esp
\overline{W}_nW_n' = \alpha+ \mathrm{O}\bigl( (\alpha/\beta)^n\bigr).$
So, $$ \esp |W_n-W_n'|^2 = \esp |W_n|^2 + \esp |W_n'|^2 - 2\Re \esp
\overline{W}_nW_n' = \mathrm{O}\left( (\alpha/\beta)^n\right).$$

For the second assertion, use Equation~\eqref{amoinsxn} instead
of~\eqref{birap}.

\begin{theorem}\label{thm-3}
  Suppose that $\beta \ge \alpha\ge 1$. 
\begin{enumerate}
\item The probability~$M$ of Theorem~\ref{firstThm} is the unique
  fixed point of~$\T$ in ${\mathscr P}_\alpha$.
\item If $\alpha=\beta$, then, for all~$\mu\in {\mathscr P}_\alpha$,
  $\dif_{W,2}(\T^n\mu,M)=\mathrm{O}(1/n)$.
\item If $\beta>\alpha$, then, for all $\mu\in \bigcup_{\alpha\le
    \gamma<\beta}{\mathscr P}_\gamma$, $\dif_{W,2}(\T^n\mu,M)=
  \mathrm{O}\bigl((\alpha/\beta)^n\bigr)$.
\item The fixed point~$M$ is a solution to the following equation:
\begin{equation}\label{WW}
W = \sum_{j\ge 1}a_jW(j)\widetilde W(j).
\end{equation}
where $W$ is distributed according to $M$, the $W(j),\widetilde W(j)$ 
are independent copies of $W$, also independent of $a$.
\end{enumerate}
\end{theorem}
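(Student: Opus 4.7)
The plan is to derive items~(2) and~(3) from Lemma~\ref{sim} by a coupling argument, deduce~(1) at once from these convergence results, and observe that~(4) is already implicit in the martingale construction underlying Theorem~\ref{firstThm}.

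First I would treat~(2) and~(3) simultaneously. Fix $\mu\in\mathscr{P}_\gamma$, where $\gamma=\alpha$ when $\alpha=\beta$ and $\gamma\in[\alpha,\beta)$ otherwise, and let $\rho$ be any coupling of $\mu$ and $\M$, for instance the independent coupling $\mu\otimes\M$. Since $\moment_2(\M)=\alpha\le \gamma$ by Theorem~\ref{firstThm}, $\rho$ belongs to $\mathscr{P}^{(2)}_\gamma$. Because the marginals of $(\T^{(2)})^n\rho$ are $\T^n\mu$ and $\T^n\M=\M$, and $\mathscr{P}_\gamma$ is $\T$-stable (Section~\ref{DYN}), the iterates $(\T^{(2)})^n\rho$ stay in $\mathscr{P}^{(2)}_\gamma$. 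Sampling $(W_n,W'_n)$ from $(\T^{(2)})^n\rho$ then yields an admissible pair in the infimum defining $\dif_{W,2}(\T^n\mu,\M)^2$, and the decay claimed in~(2) and~(3) is a direct consequence of Lemma~\ref{sim}.

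Item~(1) follows immediately: if $\M'\in\mathscr{P}_\alpha$ is a fixed point of $\T$, then $\T^n\M'=\M'$ for every $n$; since $\mathscr{P}_\alpha\subset\mathscr{P}_\gamma$ for every $\gamma\ge\alpha$, the previous step applied to $\mu=\M'$ yields $\dif_{W,2}(\M',\M)=\lim_n \dif_{W,2}(\T^n\M',\M)=0$, whence $\M'=\M$.

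Finally, item~(4) is essentially built into Theorem~\ref{firstThm}: the recursion~\eqref{znmw} at the root reads $Z_n=\sum_{j\ge 1} a_j(\epsilon,\epsilon)\,Z_{n-1}(j,0)\,Z_{n-1}(j,1)$, the two factor families $\{Z_{n-1}(j,0)\}_j$ and $\{Z_{n-1}(j,1)\}_j$ being independent, each an independent copy of $\{Z_{n-1}\}_j$, and independent of $a$; passing to the $L^2$ limit, which is legitimate by the $L^2$-boundedness of the martingale, produces exactly~\eqref{WW}. I expect the only (mild) subtle point in the whole program to be the verification that $\T^{(2)}$ preserves both marginals throughout iteration, but this is immediate from the construction of Section~\ref{simult}: the same scenery $a(w)$ is shared, while the vertex labels $(W(w),W'(w))$ are drawn independently across the tree.
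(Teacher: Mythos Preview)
Your proposal is correct and follows essentially the same route as the paper: couple $\mu$ with $\M$ via some $\rho\in\mathscr{P}^{(2)}_\gamma$, invoke Lemma~\ref{sim} together with $\T\M=\M$ to obtain the rates in~(2) and~(3), and read off uniqueness~(1) by applying this to any other fixed point. The paper adds, as an aside, an alternative self-contained existence argument when $\alpha<\beta$ (take $\rho$ coupling $\mu$ with $\T\mu$, use Lemma~\ref{sim} to show $(\T^n\mu)_n$ is $\dif_{W,2}$-Cauchy, and conclude via the continuity of $\T$ from~\eqref{continu}), and it leaves item~(4) entirely implicit in the construction of Section~\ref{existence}, which you spell out a bit more explicitly.
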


\proof Fix $\gamma\in [\alpha,\beta)$ if $\alpha>\beta$ or
$\gamma=\alpha$ if $\alpha=\beta$.

Take $\mu\in {\mathscr P}_\gamma$. Let $\rho\in {\mathscr
  P}_\gamma^{(2)}$ such that $\pi_1(\rho)=\mu$ and $\pi_2(\rho)=M$
(where $\M$ is defined in Theorem~\ref{firstThm}). Due to
Lemma~\ref{sim} and the fact that $TM=M$, we get that the Wasserstein
distance between $T^n\mu$ and $M$ tends to 0, with the speed claimed
in the statements.
  
When $\alpha<\beta$, one can give an alternate proof of the existence
of a fixed point. Indeed, take $\mu\in {\mathscr P}_\gamma$ and set
$\mu'=\T\mu$. It follows from Lemma~\ref{sim} that $\esp
|W_{n+1}-W'_{n+1}|^2$ converges exponentially to $0$. Consequently, so
does the Wasserstein distance between $\T^{n+1}\mu$ and
$\T^{n+1}\mu'=\T^{n+2}\mu$. It follows that $(\T^n\mu )_{n\ge 0}$ is a
Cauchy sequence in $\mathscr P_\gamma$ endowed with $\dif_{W,2}$, so
$\T^n\mu$ converges in distribution as $n\to\infty$, to a limit law
$\M(\mu)$, obviously in ${\mathscr
  P}_\alpha$. Equation~\eqref{continu} implies that $\T$ is
continuous, so $\M(\mu)$ is a fixed point. Lemma~\ref{sim}
yields uniqueness.

When~$q=1$, we have $\alpha=1$. So, in this case, the fixed point is
the Dirac mass at~1.
\medskip

The next theorem deals with the explosion of moments of~$\M$.
\begin{theorem}
  Suppose $\alpha>1$ (which means $q<1$). Then there exists $2\le p_0<
  \infty$ such that, if $W$ is distributed according to~$\M$,
$$\esp W^p< \infty \Longleftrightarrow p\le p_0.$$
\end{theorem}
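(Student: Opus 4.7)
The plan is to apply Liu's criterion~\eqref{liu} to the equation~\eqref{WW} and combine it with a combinatorial lower bound on $\Phi(p):=\esp W^p$ forcing super-exponential growth.

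Set $\phi(p):=\esp\sum_{j\ge 1}a_j^p$. By Theorem~\ref{thm-3}(iv), $W=\sum_{j\ge 1}a_jW(j)\widetilde W(j)$ with independent copies $W(j),\widetilde W(j)$ of $W$, independent of $a$. Conditionally on the family $(\widetilde W(j))_{j\ge 1}$, the variable $W$ is realized as the Mandelbrot martingale limit for the linear smoothing transform with random weights $A_j=a_j\widetilde W(j)$. Liu's equivalence~\eqref{liu} therefore gives, for $p>1$,
$$
\esp W^p<\infty\iff \esp\Bigl(\sum_{j\ge 1}a_j\widetilde W(j)\Bigr)^{\!p}<\infty\quad\text{and}\quad\phi(p)\Phi(p)<1.
$$
Since $\log\Phi$ is convex on its domain of finiteness (standard fact for moments of a non-negative variable), $I:=\{p\ge 1:\Phi(p)<\infty\}$ is an interval; by Theorem~\ref{firstThm}, $[1,2]\subset I$, so $p_0:=\sup I\ge 2$.

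To prove $p_0<\infty$, fix an integer $N\ge 1$ and expand $W^{2N}=\bigl(\sum_j a_jW(j)\widetilde W(j)\bigr)^{2N}$ multinomially. Retaining only the non-negative contributions of the pair-partition configurations in which $N$ distinct indices $j_1<\cdots<j_N$ each appear with multiplicity $2$, and using the independence of the $W(j)$ and $\widetilde W(j)$ together with $\esp W^2=\alpha$,
$$
\Phi(2N)\ge\frac{(2N)!}{2^N}\,\alpha^{2N}\,\kappa_N,\qquad\kappa_N:=\esp\sum_{j_1<\cdots<j_N}a_{j_1}^2\cdots a_{j_N}^2.
$$
If $p_0=\infty$, Liu's criterion forces $\phi(2N)\Phi(2N)<1$, hence $\tfrac{(2N)!}{2^N}\alpha^{2N}\kappa_N\,\phi(2N)<1$. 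Now $\phi$ is itself log-convex (each $p\mapsto a_j^p$ is log-convex and the property is preserved under sum and expectation), with $\phi(1)=1$ and $\phi(2)=1/(bq)$, so $\phi(p)\ge(bq)^{1-p}$ and $1/\phi(2N)\le(bq)^{2N-1}$, which grows only exponentially in~$N$. Since $(2N)!/2^N$ grows super-exponentially by Stirling, the displayed inequality must fail for $N$ large enough, provided $\kappa_N>0$ for such~$N$. This contradicts $p_0=\infty$.

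The main obstacle is to guarantee $\kappa_N>0$ for arbitrarily large~$N$. If $a$ may have arbitrarily many nonzero components with positive probability (the generic case), this is immediate. If instead the support of $a$ is a.s.\ bounded by some finite $N^*$, then $\kappa_N=0$ for $N>N^*$ and the pair-partition bound alone is insufficient. One then iterates~\eqref{WW} several times before expanding, using the descendant structure from Theorem~\ref{firstThm}: after $n$ iterations the cascade variable is expressed as a sum over products involving up to $(N^*)^n$ distinct effective indices in the binary product tree, restoring access to arbitrarily large pair-partition classes. The assumptions $b>2$ (excluding concentration of $a$ on one or two indices, by Cauchy--Schwarz) and $\alpha>1$ (since $q<1$) ensure that this iterated argument still produces a super-exponential lower bound on $\Phi(p)$, forcing $p_0<\infty$.
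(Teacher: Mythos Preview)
Your reduction via Liu's criterion to the condition $\phi(p)\Phi(p)<1$ coincides with the paper's, but from that point on your argument diverges and is incomplete in two respects.

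First, the pair-partition bound $\Phi(2N)\ge\frac{(2N)!}{2^N}\alpha^{2N}\kappa_N$ is correct, but to reach a contradiction you need a quantitative \emph{lower} bound on $\kappa_N$, not merely $\kappa_N>0$: the inequality $\kappa_N<\frac{2^N}{(2N)!\,\alpha^{2N}\phi(2N)}$ must be shown to fail for some $N$. Nothing in the hypotheses prevents $\kappa_N$ from decaying super-exponentially even when $a$ has unbounded support, so ``this is immediate'' does not suffice. In the bounded-support case your iteration sketch is no more than a heuristic; making it rigorous would require tracking the effective weights after $n$ iterations of~\eqref{WW} and proving lower bounds on the corresponding elementary symmetric functions, none of which you carry out. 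The final sentence invoking $b>2$ and $\alpha>1$ is not an argument.

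Second, you have omitted half of the statement: the theorem asserts $\esp W^p<\infty\Longleftrightarrow p\le p_0$, so one must show that $\esp W^{p_0}<\infty$, i.e., that the supremum is attained. The paper handles this via lower semi-continuity of $p\mapsto\phi(p)\Phi(p)$; you say nothing about it.

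For comparison, the paper's route to $p_0<\infty$ is much shorter and avoids all combinatorics on $\kappa_N$. From $\Phi(p)^{1/p}<\phi(p)^{-1/p}$ for all $p$, letting $p\to\infty$ gives $m:=\esssup W\le\bigl(\sup_j\esssup a_j\bigr)^{-1}<\infty$. Feeding this back into~\eqref{WW}, for any $\varepsilon>0$ and any $n$ one has $W\ge(m-\varepsilon)^2\sum_{j=1}^n a_j$ with positive probability, forcing $m\ge m^2\esssup\sum_j a_j$. Since $q<1$ gives $\esssup\sum_j a_j>1$, this yields $m<1$, contradicting $m\ge\esp W=1$.
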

\proof

Due to~\eqref{liu} and the above observation, we have
$$
\esp W^p<\infty\Longleftrightarrow 
\begin{cases}
\esp\Big( \sum_{j\ge 1} a_jW(j)\Big )^p<\infty,\\
\esp\sum_{j\ge 1} a_j^{p}W(j)^p<1.
\end{cases}
$$
But
$$
\begin{cases}
  \esp \bigl(\sum_{j\ge 1}a_j\bigr)^p\le \esp\Big(\sum_{j\ge
    1} a_jW(j)\Big )^p\\
\sum_{j\ge 1}^N a_j^p W(j)^p = (\esp W^p) \esp\sum_{j\ge 1} a_j^p,
\end{cases}
$$
where we used conditional expectation with respect to $\sigma(a_j:j\ge
1)$ and Jensen's inequality to get the first inequality. So
$$ \esp W^p<\infty\Longleftrightarrow
\begin{cases} \esp \bigl(\sum_{j\ge 1}a_j\bigr)^p<\infty\\
 \esp W^p < \left( \esp\sum_{j\ge 1}
a_j^p\right)^{-1}
\end{cases}
.$$
Suppose all the moments of $W$ are finite, we must have $\bigl(\esp
W^p\bigr)^{1/p} < \left( \esp\sum_{j\ge 1} a_j^p\right)^{-1/p}$ for
all $p$.  This imposes $\esssup W\le \bigl(\sup_{j\ge 1}\esssup
a_j\bigr)^{-1}$. Let $m= \esssup W$. Let~$\varepsilon>0$. By
using~\eqref{WW} we see that, for all~$n$, with positive probability,
we have $W \ge (m-\varepsilon)^2\sum_{j=1}^n a_j$. This means $m\ge
(m-\epsilon)^2 \esssup \sum_{j>1} a_j$, hence $m\le 1/\esssup
\sum_{j>1} a_j$. But as $\sum_{j\ge 1} a_j$ is not constant and of
expectation~$1$, we would have $m<1$, which is impossible, since~$\esp
W = 1$.  \medskip

Define $p_0=\sup\{p\ge 2: \esp W^p<\infty\}$, and $\Phi(p) = (\esp
W^p) \esp\sum_{j\ge 1} a_j^p$. We necessarily have $\Phi(p_0)<1$, if
$\esp(W^{p_0})<\infty$ or $\Phi(p_0)=\infty$ if
$\esp(W^{p_0})=\infty$. However, $\Phi$ is lower semi-continuous, so
$\Phi(p_0)=\infty$ is impossible, for otherwise $\Phi(p)$ should tend
to $\infty$ as $p$ tends to $p_0$ from below, while it is bounded
by~1.  \medskip

Proposition~\ref{no3rdmoment}, in the next section, gives examples for
which~$p_0<3$.

\section{Moments of order 3}

In this section we suppose that $\beta>1$ and that $W_0$ is a
\emph{non-negative} random variable.

Define $u$, $v$ and $w$ as follows, and suppose these quantities are
finite:
\begin{eqnarray}\label{moments-a}
\frac{1}{u} &=& \esp\sum a_j^3\nonumber\\
\frac{1}{v} &=& \esp\sum_{i\ne j} a_i^2a_j\\
\frac{1}{w} &=& \esp \sum_{\#\{i,j,k\}=3} a_ia_ja_k.\nonumber
\end{eqnarray}

Notice that we have $\displaystyle \frac1u+\frac3v+\frac1w= \esp
\left( \sum a_j\right)^3$, which in the conservative case implies
$\displaystyle \frac1u+\frac3v+\frac1w= 1$. 

\medskip
Also, H\"older inequality yields
\begin{equation*}
\left( \esp \sum a_j^2\right)^{1/2}\le \left( \esp \sum
  a_j\right)^{1/4}\left( \esp \sum a_j^3\right)^{1/4}.
\end{equation*}

We also set, for $\kappa>0$,
\begin{equation}\label{us}
u_\kappa =\left(\esp  \sum_{j\ge 1} a_j^\kappa\right)^{-1}.
\end{equation}
So we have $u=u_3$.

In the conservative case
\begin{equation}\label{bu}
b=u_2\text{\quad and\quad } b\le u\le b^2.
\end{equation}
\medskip

It is easy to get the following formula from Equation~\eqref{eqfonc}
\begin{equation}\label{recur3}
\esp(Y^3) = \esp W^3\esp Y^3/u+3\esp W^2\esp Y^2/v + 1/w
\end{equation}
which can be written as
\begin{equation}\label{Recur3}
  \esp Y^3 = \frac{u(3w\esp W^2\esp Y^2+v)}{vw(u-\esp W^3)}.
\end{equation}
Set
\begin{equation}\label{psi}
  \psi_\theta(t) = \frac{u(3w\theta\varphi(\theta)+v)}{vw(u-t)}.
\end{equation}
where $1<\theta<\beta$, and
\begin{equation}\label{map}
  \Phi\,:\ (\theta,t) \longmapsto \bigl( \varphi(\theta),\psi_\theta(t)\bigr).
\end{equation}

This means that, if $\theta=\esp W^2$ and~$t=\esp W^3$, we have
\begin{equation*}
\left( \esp Y^2,\esp Y^3\right) = \Phi(\theta,t).
\end{equation*}

This is why we wish to iterate~$\Phi$.
\medskip

\begin{remarks}\label{obvious}
Let us first make some simple observations on homographies. Consider
$\chi(x) = c/(d-x)$, where $c$ and $d$ are positive parameters. Then
\begin{enumerate}
\item $x\chi(x)$ increases with $x$ for $x\in (-\infty,d)$,
\item when $d^2>4c$, $\chi$ has two real fixed points; when~$d$ is
  fixed, the smaller fixed point $w_-$ is an increasing function
  of~$c$ and the larger one $w_+$ is decreasing.
\item If $x<w_-$ then $x<\chi(x)$, and if $w_-<x<w_+$ then
  $w_-<\chi(x)<x$.
\end{enumerate}
\end{remarks}

First, one can check 
that, when $v(uw-4)+12w(b-1)>0$, $\psi_\theta$ has real fixed points if and
only if
\begin{equation*}
  \theta  \le \frac{vbq(uw-4)}{v(uw-4)+12w(b-1)}.
\end{equation*}
If it is so, let $\gamma_{{}_-}(\theta)\le \gamma_{{}_+}(\theta)$
stand for the fixed points.\medskip

Define
\begin{equation}\label{theta}
  \theta_a = \frac{vbq(uw-4)}{v(uw-4)+12w(b-1)}
  \text{\quad and\quad}   \vartheta = \min \{\beta, \theta_a\}.
\end{equation}

In the Galton-Watson case, $v(uw-4)+12w(b-1)$ has the same sign as
$b_2^2q^4+8b_2-12b_1+4$. But, since $2<b_1<b_2$ (see~\eqref{bs}) and
$q\le 1$, we have 
\begin{equation*}
b_2^2+8b_2-12b_1+4 > b_2^2-4b_2+4>0.
\end{equation*}
So, if $q$ is large enough $b_2^2q^4+8b_2-12b_1+4$ is positive.

In this setting we have
$$  \theta_a = \frac{b_1b_2^2q^5+4(3-b_1)b_2q-8b_1q}{b_2^2q^4+8b_2-12b_1+4}.$$

\begin{proposition}\label{no3rdmoment}
  If 
\begin{enumerate}
\item $v(uw-4)+12w(b-1)>0$,
\item $12w(b-1)>v(uw-4)>0$'
\item $\bigl(12w(b-1)+v(uw-4)\bigr)^2 > 12vw(uw-4)b^2q^2$,
\end{enumerate}
then the third moment of~$\M$ is infinite.
\end{proposition}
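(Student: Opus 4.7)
The plan is to argue by contradiction. Assume $\esp W^3<\infty$ for $W\sim\M$. Since $W$ is non-negative, $\esp W^2=\alpha$ (Theorem~\ref{firstThm}), and $W$ satisfies~\eqref{WW}, cubing~\eqref{WW} and taking expectation --- exactly as in the derivation leading to~\eqref{recur3}, but with both factors distributed like $W$ --- yields $\esp W^3=(\esp W^3)^2/u+3\alpha^2/v+1/w$. Since $\varphi(\alpha)=\alpha$, this is the fixed-point equation $t=\psi_\alpha(t)$; so $\esp W^3$ would have to be a non-negative real fixed point of the homography $\psi_\alpha$. The strategy is to show that under~(1)--(3) the map $\psi_\alpha$ has \emph{no} real fixed point at all.

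Writing $\psi_\alpha(t)=c/(u-t)$ with $c=u(3w\alpha^2+v)/(vw)$, real fixed points exist iff $u^2\ge 4c$, i.e.\ iff $12w\alpha^2\le v(uw-4)$. By Remark~\ref{obvious}(1) the map $\theta\mapsto\theta\varphi(\theta)$ is strictly increasing on $(-\infty,bq)$, and $\theta_a$ from~\eqref{theta} is defined as the unique $\theta$ at which $12w\theta\varphi(\theta)=v(uw-4)$. Thus non-existence of real fixed points of $\psi_\alpha$ is equivalent to the strict inequality $\alpha>\theta_a$, and the problem reduces to this single comparison.

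To establish $\alpha>\theta_a$, I would compute $p(\theta_a)$, where $p(x)=x^2-bqx+(b-1)$ has roots $\alpha\le\beta$. With $D=v(uw-4)+12w(b-1)$, the identities $\theta_a=vbq(uw-4)/D$ and $bq-\theta_a=12wbq(b-1)/D$ give, after simplification,
\[
p(\theta_a)\;=\;(b-1)\,\frac{D^2-12vw(uw-4)\,b^2q^2}{D^2}.
\]
Hypothesis~(3) then forces $p(\theta_a)>0$, so $\theta_a\notin[\alpha,\beta]$; hypothesis~(2) gives $D>2v(uw-4)>0$, hence $\theta_a<bq/2=(\alpha+\beta)/2\le\beta$, which rules out the alternative $\theta_a>\beta$. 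Therefore $\theta_a<\alpha$, contradicting the existence of a real fixed point of $\psi_\alpha$, and hence $\esp W^3=\infty$. Hypothesis~(1) enters implicitly to make $D>0$, so that $\theta_a$ is a well-defined positive real.

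The main obstacle is the bookkeeping of the three hypotheses: they play distinct roles --- (1) well-posedness of $\theta_a$, (2) the location $\theta_a<bq/2$, and (3) the sign $p(\theta_a)>0$ --- and must be combined geometrically to pin $\theta_a$ below $\alpha$ rather than above $\beta$. The algebraic simplification leading to the displayed formula for $p(\theta_a)$ is routine but must be carried out carefully; everything else follows from the discriminant condition and the monotonicity of $\theta\varphi(\theta)$.
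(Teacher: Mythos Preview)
Your argument is correct and follows essentially the same route as the paper's own proof: assume $\moment_3(\M)<\infty$, observe that this third moment must then be a fixed point of $\psi_\alpha$, and reach a contradiction by showing that the hypotheses force $\theta_a<\alpha$, so that $\psi_\alpha$ has no real fixed point. The paper is terser---it simply states that conditions (2) and (3) translate into $2\theta_a<bq$ and $p(\theta_a)>0$---whereas you carry out the algebra explicitly, deriving the closed formula $p(\theta_a)=(b-1)\bigl(D^2-12vw(uw-4)b^2q^2\bigr)/D^2$; but the underlying logic is identical.
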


\proof The above conditions mean $2 \theta_a<bq$ and $ \theta_a^2-bq
\theta_a+b-1>0$, which implies that $ \theta_a<\alpha$ and
$\psi_\alpha$ has no fixed point. Suppose that~$\M$ has a finite third
moment~$t$. Then $t=\moment_3\M=\moment_3\T\M = \psi_{\alpha}(t)$,
which is not possible since $\psi_{\alpha}$ has no fixed point.

To be complete, one should prove that these conditions can be
fulfilled. Indeed, in the case of Galton-Watson (see
Section~\ref{examples}), with $b_2=b_1^2$ the three requirements are
\begin{eqnarray*}
&& b^4q^4+8b^2-12b+4 > 0\\
&& 16b^2-36b+20-b^4q^4>0\\
&&b^8q^8-12b^6(b-1)q^6+8b^4(b-1)(2b-1)q^4\\
&&\hspace{6em}+48b^2(b-1)^2(b-2)q^2+ 16(b-1)^2(2b-1)^2>0.
\end{eqnarray*}
The first inequality always holds (do not forget that $b>2$). If
\begin{equation*}
  q^4<
  \min\left\{\frac{16b^2-36b+20}{b^4},
    \frac{48b^2(b-1)^2(b-2)}{12b^6(b-1)}\right\} = \frac{4(b-1)(b-2)}{b^4}
\end{equation*}
the remaining inequalities are fulfilled.
\bigskip

The following facts, when $v(uw-4)+12w(b-1)>0$, easily result from
Remarks~\ref{obvious}.
\begin{enumerate}
\item If $\alpha\le \theta< \vartheta$ and $\gamma_{{}_-}(\theta)\le t\le
  \gamma_{{}_+}(\theta)$, then
\begin{equation}\label{fait1}
  \gamma_{{}_-}\bigl( \varphi(\theta)\bigr)\le \gamma_{{}_-}(\theta)\le
  \psi_\theta(t)\le t\le \gamma_{{}_+}(\theta)\le \gamma_{{}_+}\bigl(
  \varphi(\theta)\bigr). 
\end{equation}
\item If $\displaystyle \theta\le \alpha\le \theta_a$ and $t\le
  \gamma_{{}_-}(\theta)$, 
  then
\begin{equation}\label{fait2}
 t \le \psi_\theta(t)\le \gamma_{{}_-}(\theta)\le \gamma_{{}_-}\bigl(
 \varphi(\theta)\bigr). 
\end{equation}
\end{enumerate}
\medskip

Let us consider the following subsets of~${\mathbb R}^2$:
\begin{eqnarray*}
  \Omega_1 &=& \left\{(\theta,t)\ :\ \alpha\le \theta<\vartheta,\
    \gamma_{{}_-}(\theta)\le t\le 
    \gamma_{{}_+}(\theta)\right\},\\
\Omega_2 &=& \left\{(\theta,t)\ :\ \theta\le \alpha,\ t\le
  \gamma_{{}_-}(\theta)\right\} \text{\quad if\quad} \alpha\le \theta_a.
\end{eqnarray*}
The set~$\Omega_1$ is invariant under~$\Phi$, and, if $\displaystyle
\alpha\le \theta_a$, so is $\Omega_2$ (notice that if $\alpha=1$ then
$\theta_a>1$ and $\Omega_2$ reduces to $\delta_1$).  \medskip

Set, for $j=1,\,2$,
\begin{equation*}
{\mathscr D}_j = \left\{\mu\in {\mathscr P}\ :\ \moment_1(\mu)=1,\,
  (\moment_2(\mu),\moment_3(\mu)\in \Omega_j)\right\}.
\end{equation*}

Then it follows from the above analysis that both these sets are
invariant under the transformation~$\mathsf T$. 

So, if $\mu\in {\mathscr D}_1\cup {\mathscr D}_2$, one has
$$\bigl(\moment_2(\T^n\mu),\moment_3(\T^n\mu) = \Phi^n( 
\moment_2(\mu),\moment_3(\mu))\bigr)$$
and
$$ \lim \moment_2
(\T^n \mu) = \alpha, \text{\quad and\quad} \lim \moment_3(\T^n\mu) =
\gamma_{{}_-}(\alpha).$$
\medskip

Of course this is of
interest only if ${\mathscr D}_1\cup {\mathscr D}_2$ is non-empty. In
particular, one has to 
take into account the inequalities $1\le \moment_2(\mu)^{1/2}\le
\moment_3(\mu)^{1/3}$.

Let us show that there are parameters such that ${\mathscr D}_1$ is
nonempty. Consider the Galton-Watson case with $q=1$. Then one has
$\alpha=1<\beta=b_1-1$, $\gamma_-(\alpha)=1$,
$\gamma_+(\alpha)=b_2-1>1$, and $\theta_a-1= \displaystyle
\frac{(b_2-2)^2(b_1-1)}{b_2^2+8b_2-12b_1+4}>0$. It results that for a
Galton-Watson process with $(q,b_1,b_2)$ in a neighborhood of
$(1,b,b^2)$, where $b$ in an integer larger than or equal to 3, the
set ${\mathscr D}_1$ is nonempty.

\section{The case $\mathbf{q=1}$: a central limit theorem}\label{sec5}

We still suppose that $\beta>1$ and~$W$ is a non-negative random variable.

In this case, we know that $\T^n\mu$ weakly converges towards the
Dirac mass at~1. We have the following result.

For $\mu\in \bigcup_{1<
    \gamma<\beta}{\mathscr P}_\gamma$ and $n\ge 1$, we
defined\ \ $\displaystyle\sigma_n=\left (\int (x-1)^2 \,\T^{ n}\!\mu
(\dif x)\right )^{1/2}$.

Then Equations~\eqref{mmnt4} and~\eqref{birap0} give
\begin{equation}\label{ecarttype}
  \sigma_{n+1}^{2} = \frac{\sigma_n^2}{b-1 - \sigma_n^2} \text{\quad
    and\quad} \frac{\sigma_n^2}{b-2-\sigma_n^2} =
  (b-1)^{-n}\,\frac{\sigma_0^2}{b-2-\sigma_0^2}. 
\end{equation}

\begin{lemma}\label{Bound3}
There exists $C$ such that for all non-negative $W$ whose distribution
is in ${\mathscr D}_1\setminus\{\delta_1\}$ one has
\begin{equation*}
  (u-\esp W^3)\, \esp Z_Y^3 \le (b-1)^{3/2} \esp Z_W^3 + C\bigl(
  (\esp Z_W^3)^{2/3} + (\esp Z_W^3)^{1/3} + 1\bigr),
\end{equation*}
where $Y=\T W$, $Z_W = |W-1|/\sigma_W$, and $Z_Y = |Y-1|/\sigma_Y$.
\end{lemma}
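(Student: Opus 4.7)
The plan is to combine an exact identity for the signed third moment $\esp(Y-1)^3$ with a passage to absolute moments that exploits $Y\ge 0$.

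First, since $q=1$ forces $\sum_j a_j=1$ almost surely, the functional equation for $Y$ gives the decomposition
\[
Y-1 \;=\; \sum_{j\ge 1}a_j\bigl(W(j)Y(j)-1\bigr) \;=\; A + B,
\]
with $A=\sum_j a_j W(j)(Y(j)-1)$ and $B=\sum_j a_j(W(j)-1)$ both (conditionally on $a$) centered. Expanding $\esp(A+B)^3=\esp A^3+3\esp A^2B+3\esp AB^2+\esp B^3$ and using the independence of $a$, the $W(j)$'s and the $Y(j)$'s, only diagonal tuples survive in each term. This produces $\esp A^3=(\esp W^3/u)\,\esp(Y-1)^3$, $\esp B^3=\esp(W-1)^3/u$, $\esp A^2B=\sigma_Y^2(\esp W^3-\esp W^2)/u$, and $\esp AB^2=0$. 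Collecting and rearranging yields the key identity
\[
(u-\esp W^3)\,\esp(Y-1)^3 \;=\; \esp(W-1)^3 + 3\sigma_Y^2\bigl(\esp W^3-\esp W^2\bigr).
\]

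Next, I convert this into a statement about $\esp|Y-1|^3$ via $|Y-1|^3=(Y-1)^3+2[(Y-1)^-]^3$. Because $W\ge 0$ forces $Y\ge 0$, we have $(Y-1)^-\le 1$ and hence $\esp[(Y-1)^-]^3\le\esp[(Y-1)^-]^2\le\sigma_Y^2$, giving
\[
(u-\esp W^3)\,\esp|Y-1|^3 \;\le\; \esp(W-1)^3 + 3\sigma_Y^2(\esp W^3-\esp W^2) + 2(u-\esp W^3)\sigma_Y^2.
\]

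Dividing by $\sigma_Y^3$, the leading contribution obeys $\esp(W-1)^3/\sigma_Y^3\le(\sigma_W/\sigma_Y)^3\esp Z_W^3=(b-1-\sigma_W^2)^{3/2}\esp Z_W^3\le(b-1)^{3/2}\esp Z_W^3$, where I use the relation $\sigma_Y^2=\sigma_W^2/(b-1-\sigma_W^2)$ from~\eqref{ecarttype}. The remaining contributions, after rewriting $\esp W^3-\esp W^2=\esp(W-1)^3+2\sigma_W^2$ and bounding $\esp(W-1)^3\le\sigma_W^3\esp Z_W^3$, reduce to a linear combination of the moments $(\esp Z_W^3)^k$ for $k\in\{0,1/3,2/3,1\}$, with coefficients uniformly bounded on $\mathscr{D}_1$ via the available uniform bounds on $\sigma_W,\,\esp W^2,\,\esp W^3$, together with the Jensen lower bound $\esp Z_W^3\ge 1$ (which lets us re-interpolate sub-linear powers into the three-term form announced in the statement).

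The main technical obstacle lies in the regime $\sigma_W\to 0$, where the crude estimate $\esp[(Y-1)^-]^3\le\sigma_Y^2$ produces a factor $1/\sigma_Y$ that diverges as $W\to\delta_1$. The fix is to sharpen this negative-part estimate to $\esp[(Y-1)^-]^3=O(\sigma_Y^3)$ by iterating the cascade representation for $Y$ once more and invoking non-negativity at the child scale; equivalently, a bootstrap applied to the Minkowski bound $\|Y-1\|_3\le\|A\|_3+\|B\|_3$ together with a Rosenthal-type estimate for $\|A\|_3$ and $\|B\|_3$ (conditional on $a$, resp.\ on $(a,W(j))$) provides the same conclusion, with the coefficient $(b-1)^{3/2}$ again emerging from $\sigma_W/\sigma_Y\le\sqrt{b-1}$ and the factor $(u-\esp W^3)$ being forced on the left-hand side by the identity for $\esp A^3$ above.
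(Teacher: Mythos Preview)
Your signed identity $(u-\esp W^3)\,\esp(Y-1)^3=\esp(W-1)^3+3\sigma_Y^2(\esp W^3-\esp W^2)$ is correct, and your treatment of the term $3(\esp W^3-\esp W^2)/\sigma_Y$ is fine once you use $\sigma_W^3\esp Z_W^3=\esp|W-1|^3\le \esp W^3-\esp W^2$, which is bounded on $\mathscr D_1$. The genuine gap is the negative-part term. After dividing by $\sigma_Y^3$, the contribution $2(u-\esp W^3)\esp[(Y-1)^-]^3/\sigma_Y^3$ with the crude bound $\esp[(Y-1)^-]^3\le\sigma_Y^2$ becomes $2(u-\esp W^3)/\sigma_Y$, and since $u-\esp W^3\to u-1>0$ as $\sigma_W\to 0$, this is not dominated by any power of $\esp Z_W^3$. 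Your stated fix, ``$\esp[(Y-1)^-]^3=O(\sigma_Y^3)$ uniformly'', is not justified: it is essentially equivalent to a uniform bound on $\esp Z_Y^3$, which is the content of the corollary you are trying to reach, so invoking it here is circular. The Minkowski/Rosenthal alternative you mention cannot give the exact factors either, since Rosenthal's inequality carries an absolute constant $>1$ in front of the diagonal term, spoiling both the $(u-\esp W^3)$ on the left and the $(b-1)^{3/2}$ on the right.

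The paper's proof (deferred to \cite{BPW}, and whose complex analogue is displayed in Section~\ref{complex}) avoids the detour through $(Y-1)^3$ entirely: one writes $|Y-1|\le\sum_j a_j|W(j)Y(j)-1|$, cubes, and splits the triple sum according to coincidences among indices; then one bounds $\esp|WY-1|^3$ via $|WY-1|\le W|Y-1|+|W-1|$ expanded as a cube. The term $\frac{1}{u}\esp W^3\esp|Y-1|^3$ is the only one carrying $\esp|Y-1|^3$, producing $(u-\esp W^3)$ on the left, while $\frac{1}{u}\esp|W-1|^3$ gives the $(b-1)^{3/2}\esp Z_W^3$ after dividing by $\sigma_Y^3$; every other term is a product of first and second moments of $|W-1|$ and $|Y-1|$, hence of order $\sigma_Y^3$ times a bounded quantity, and the residual factors of $\sigma_W^k\esp Z_W^3$ (for $k=1,2$) are converted into $(\esp Z_W^3)^{2/3}$ and $(\esp Z_W^3)^{1/3}$ using $\sigma_W\le (\esp|W-1|^3)^{1/3}(\esp Z_W^3)^{-1/3}$. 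If you prefer to salvage your route, the correct repair is not a uniform $O(\sigma_Y^3)$ bound but a \emph{recursion} for $\esp(1-Y)_+^3$ obtained from $(1-Y)_+\le\sum_j a_j(1-W(j)Y(j))_+$ and $(1-WY)_+\le(1-W)_++W(1-Y)_+$; this yields $(u-\esp W^3)\esp(1-Y)_+^3\le\esp(1-W)_+^3+O(\sigma_Y^3)$, and adding twice this to your signed identity reconstitutes $\esp|W-1|^3$ on the right with the correct coefficient. Either way, the missing work is exactly this direct cubic expansion, which your sketch does not carry out.
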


The proof, as well as that of the following corollary, follows the
same lines as in~\cite{BPW}. 

\begin{corollary}\label{bound3}
  If $(b-1)^3 < (u-1)^2$ and $\mu\in {\mathscr
    D}_1\setminus\{\delta_1\}$, then
$$\sup_n \int \sigma_n^{-3}|x-1|^3 \T^n\mu(\dif x) < \infty.$$
\end{corollary}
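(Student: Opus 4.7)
Set $x_n = \int \sigma_n^{-3}|z-1|^3 \, \T^n\mu(\dif z) = \esp Z_{W_n}^3$, where $W_n$ is distributed according to $\T^n\mu$. The goal is to show $\sup_n x_n < \infty$. My strategy is to iterate Lemma~\ref{Bound3} and show that the resulting scalar recursion for $x_n$ becomes, eventually in $n$, a contraction (linear coefficient strictly less than $1$) plus a sublinear perturbation, which forces boundedness.

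\textbf{Finiteness and the recursion.} Since $\mu \in {\mathscr D}_1 \setminus \{\delta_1\}$, $\moment_3(\mu) < \infty$ and $\sigma_0 > 0$, so $x_0 < \infty$; by $\T$-invariance of ${\mathscr D}_1$ and positivity of every $\sigma_n$ from \eqref{ecarttype}, $x_n < \infty$ for every $n$. Lemma~\ref{Bound3} applied to $W = W_n$, $Y = W_{n+1}$ yields
\begin{equation*}
\bigl(u - \esp W_n^3\bigr)\,x_{n+1} \le (b-1)^{3/2} x_n + C\bigl(x_n^{2/3} + x_n^{1/3} + 1\bigr).
\end{equation*}

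\textbf{Asymptotics of the denominator.} In Section~\ref{sec5}, $q=1$ and $\alpha=1$. A direct calculation using the conservative-case identity $1/u+3/v+1/w = 1$ gives $\psi_1(t) = (u-1)/(u-t)$, whose fixed points are $t=1$ and $t=u-1$; hence $\gamma_{{}_-}(1)=1$. The convergence $\moment_3(\T^n\mu) \to \gamma_{{}_-}(\alpha) = 1$, established just before this corollary, then yields $u - \esp W_n^3 \to u - 1$. The standing hypothesis $(b-1)^3 < (u-1)^2$ combined with $b>2$ gives $(u-1)^2 > 1$, hence $u>2>1$. Fixing $\rho$ with $(b-1)^{3/2}/(u-1) < \rho < 1$, there exists $N$ such that for all $n \ge N$,
\begin{equation*}
x_{n+1} \le \rho\, x_n + C'\bigl(x_n^{2/3} + x_n^{1/3} + 1\bigr)
\end{equation*}
for some $C'<\infty$.

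\textbf{Scalar recursion and main obstacle.} The non-decreasing function $h(x) = \rho x + C'(x^{2/3} + x^{1/3} + 1)$ satisfies $h(x)/x \to \rho < 1$, so there exists $M$ with $h(x) \le x$ for $x \ge M$. Setting $C_\star = \max\{M, x_N, h(M)\}$ and arguing by induction (splitting on $x_n \le M$ versus $M < x_n \le C_\star$), one obtains $x_n \le C_\star$ for all $n \ge N$; combined with the finiteness of $x_0,\ldots,x_{N-1}$, this yields $\sup_n x_n < \infty$. The delicate point is the analysis of the denominator: the \emph{a priori} uniform bound $\esp W_n^3 \le \gamma_{{}_+}(\alpha) = u-1$ coming from $\Omega_1$-invariance only gives $u - \esp W_n^3 \ge 1 < (b-1)^{3/2}$ when $b>2$, which is useless for contraction. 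It is crucial to exploit the dynamical convergence $\esp W_n^3 \to 1$, which upgrades the denominator to be close to $u-1$ and lets the hypothesis $(b-1)^3 < (u-1)^2$ actually kick in.
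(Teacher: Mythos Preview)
Your argument is correct and is precisely the route the paper has in mind (it defers the details to \cite{BPW}, but the mechanism is the one you describe): iterate Lemma~\ref{Bound3}, use $\esp W_n^3\to\gamma_-(1)=1$ so that the denominator $u-\esp W_n^3$ approaches $u-1$, and then the hypothesis $(b-1)^{3/2}<u-1$ makes the resulting scalar recursion eventually contractive with a sublinear remainder. Your remark that the crude $\Omega_1$ bound $\esp W_n^3\le u-1$ is not sufficient and that the dynamical convergence is essential is exactly the point.
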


Now, we need to carefully iterate Formula~\eqref{foncn}.  We set
$\displaystyle Z_n = 
\frac{W_n-1}{\sigma_n}$. Equation~(\ref{foncn}) yields
\begin{equation}\label{c6}
  Z_{n+1} = \sum_{j\ge1} a_j\left[\sigma_n\,Z_{n}(j)\,
    Z_{n+1}(j) + \frac{\sigma_n}{\sigma_{n+1}}\, Z_{n}(j) + Z_{n+1}(j)
  \right].
\end{equation} 
If we set
\begin{equation}\label{R}
R_n = \sum_{j\ge1} a_jZ_{n}(j) Z_{n-1}(j) \sigma_{n-1}
+ \left( \frac{\sigma_{n-1}}{\sigma_n} - \sqrt{b-1}
\right) \sum_{j\ge1} a_j Z_{n-1}(j),
\end{equation}
then Equation~(\ref{c6}) rewrites as
\begin{equation}\label{c7}
  Z_{n+1} = R_{n+1} + \sum_{j\ge1}
  a_j Z_{n+1}(j) + \sqrt{b-1} \sum_{j\ge1} a_j Z_n(j).
\end{equation}

We are going to use repeatedly Formula~(\ref{c7}). Let~$\epsilon$
stand for empty word on any alphabet. For this purpose, fix~$n>1$, define
$R_n(\epsilon,\epsilon)=R_n$ as well as $Z_n(\epsilon,\epsilon)=Z_n$,
and write~(\ref{c7}) in the following way
\begin{equation}\label{c8}
  Z_{n}=Z_n(\epsilon,\epsilon) = R_{n}(\epsilon,\epsilon) + \sum_{j\ge1}
  a_j(\varepsilon,\varepsilon) Z_{n}(j,0) +\sqrt{b-1}
  \sum_{j\ge1}a_j(\varepsilon,\varepsilon) Z_{n-1}(j,1).
\end{equation}

Since we are interested in distributions only, we can take copies of
these variables so that we can write
\begin{eqnarray*}
  Z_{n}(j,0) &=& R_{n}(j,0) +  \sum_{k\ge1} a_k(j,0) Z_{n}(jk,00)
  +\sqrt{b-1} \sum_{k\ge1} a_k(j,0)
  Z_{n-1}(jk,01)\\
  Z_{n-1}(j,1) &=& R_{n-1}(j,1) +\sum_{k\ge1} a_k(j,1) Z_{n-1}(jk,10)\\
&&\hspace{8em}  {\ }+ \sqrt{b-1} \sum_{k\ge1} a_k(j,1) Z_{n-2}(jk,11).
\end{eqnarray*}

Notice that since by definition in Formula~(\ref{c8}) the random
variables of the form $Z_{n-1}(j,w)$ and $Z_n(j,w)$ are mutually
independent and independent of~$a$, and the same holds for the random
variables $R_n(j,w)$ and $R_{n-1}(j,w)$, as well as for the random
variables $Z_{n-2}(jk,w)$, $Z_{n-1}(jk,w)$ and $Z_{n}(jk,w)$.

Then Formula~(\ref{c8}) rewrites as
\begin{eqnarray*}
Z_n(\epsilon,\epsilon) = R_n(\epsilon,\epsilon)&+& \sum_{j\in \alphabet}
a_j(\epsilon,\epsilon)\left( \sqrt{b-1}\,R_{n-1}(j,1) +
R_{n}(j,0)\right)\\
&+&\sum_{j,k\ge1} (b-1)a_j(\epsilon,\epsilon)Z_{n-2}(jk,11)\\
 &+& \sum_{j,k\ge1} \sqrt{b-1}\,a_j(\epsilon,\epsilon)a_k(j,1)Z_{n-1}(jk,10) \\
  &+&\sum_{j,k\ge1} \sqrt{b-1}a_j(\epsilon,\epsilon)a_k(j,0)Z_{n-1}(jk,01) \\
  &+&\sum_{j,k\ge1} a_j(\epsilon,\epsilon)a_k(j,0)Z_{n}(jk,00),
\end{eqnarray*}
and so on. At last we get $Z_n = T_{1,n} + T_{2,n}$, with
\begin{eqnarray}\label{T1n} 
  T_{1,n} 
  &=& \sum_{k=0}^{n-1}
  \sum_{\substack{m\in\{0,1\}^k\\w\in {\mathscr T}_k}}
  (b-1)^{\frac{k-\varsigma(m)}{2}}R_{n-k+\varsigma(m)}(w,m)
  \prod_{j=0}^{k-1} a_{w_{j+1}}(w|_j,m|_j)\label{r1}\\
  T_{2,n} &=& \sum_{\substack{m\in\{0,1\}^n\\w\in{\mathscr T}_n}}
  (b-1)^{\frac{n-\varsigma(m)}{2}} Z_{\varsigma(m)}(w,m) \prod_{j=0}^{n-1}
  a_{w_{j+1}}(w|_j,m|_j),\label{r2} 
\end{eqnarray}
where~$\varsigma(m)$ stands for the number of zeroes in~$m$.
Moreover, all variables in Equation~(\ref{r2}) are independent, and in
Equation~(\ref{r1}), the variables corresponding to the same~$k$ are
independent.
\smallskip

We can also use a more constructive approach to obtain the previous
decomposition of $Z_n$. At first, we notice that the meaning of
Equation~\eqref{c7} is the following: given independent variables
$Z_{n}(k)$ and $Z_{n+1}(k)$ (for $0\le k< b$) equidistributed with
$Z_{n}$ and $Z_{n+1}$, and independent of~$a$, if we define~$R_n$ by
Equation~\eqref{R}, then the right hand side of Equation~\eqref{c7}
is equidistributed with~$Z_{n+1}$.

Let~$n$ be fixed larger than~2. One starts with two
collections
$$
\bigl\{ Z_\ell(w,m)\bigr\}_{0\le \ell\le n,\,w\in {\mathcal T}_n,\, m\in
  \{0,1\}^n }\text{\quad and\quad}\bigl\{ a(w,m)\bigr\}_{0\le \ell\le
  n,\,w\in {\mathcal T}_\ell,\, m\in \{0,1\}^{\ell} }
$$
such that all these variables are independent, the $Z_\ell(\cdot,\cdot)$
have the same distribution as~$Z_\ell$, and the $a(\cdot,\cdot)$ have the
same distribution as~$a$.

One defines by recursion
\begin{multline*}
  R_\ell(w,m) = \sum_{j\ge1}  a_j(w,m)Z_{\ell-1}(wj,m0)
  Z_{\ell}(wj,m1) \sigma_{\ell-1} + {\ }\\ \left(
    \frac{\sigma_{\ell-1}}{\sigma_\ell} - \sqrt{b-1} \right)
  \sum_{j\ge1} a_j(w,m) Z_{\ell-1}(wj,m0)
\end{multline*}
and
\begin{multline*}
  Z_{\ell}(w,m) = R_{\ell}(w,m) + {\ }\\\sqrt{b-1}
  \sum_{j\ge1} a_j(w,m) Z_{\ell-1}(wj,m0) + 
  \sum_{j\ge1} a_j(w,m) Z_{\ell}(wj,m1),
\end{multline*}
for $0\le \ell\le n$, $(w,m)\in {\mathbb Z}_+^j\times \{0,1\}^j$ with~$j\ge
n-\ell$.
\medskip

Due to~\eqref{c7}, all these new variables $Z_\ell(\cdot.\cdot)$ are
equidistributed with~$Z_\ell$, and we get $Z_n(\epsilon,\epsilon) =
T_{1,n} + T_{2,n}$.

It will be convenient to denote by ${\mathscr A}_n$ the $\sigma$-field
generated by the variables~$a(w,m)$ with $|w|<n$ and $|m|<n$, and by
${\mathscr A}$ the $\sigma$-field generated by all the variables
$a(w,m)$.

\begin{proposition}\label{prop1}
We have $\displaystyle \lim_{n\to\infty} \esp{T_{1,n}^2} = 0$, so
$T_{1,n}$ converges in distribution to 0.
\end{proposition}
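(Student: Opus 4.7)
The plan is to compute, for each fixed $k \in \{0,1,\ldots,n-1\}$, the second moment of the level-$k$ piece
\[
S_k := \sum_{(w,m)\in\mathscr{T}_k\times\{0,1\}^k} \pi(w,m)\,R_{n-k+\varsigma(m)}(w,m), \qquad \pi(w,m):=(b-1)^{(k-\varsigma(m))/2}\prod_{j=0}^{k-1} a_{w_{j+1}}(w|_j,m|_j),
\]
and then apply the triangle inequality in $L^2$. Expanding $R_\ell = \sigma_{\ell-1}\sum_j a_j Z_{\ell-1}(j)Z_\ell(j)+c_\ell\sum_j a_j Z_{\ell-1}(j)$ with $c_\ell=\sigma_{\ell-1}/\sigma_\ell-\sqrt{b-1}$, and using that the $Z$'s are centered with variance~$1$ and independent of the $a$'s (and of each other at distinct positions), a direct calculation yields $\esp R_\ell^2=(\sigma_{\ell-1}^2+c_\ell^2)/b$. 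Combined with $\sigma_\ell^2=O((b-1)^{-\ell})$ from~\eqref{ecarttype} and $c_\ell^2=O(\sigma_{\ell-1}^4)$, this gives $\esp R_\ell^2\le C(b-1)^{-\ell}$.

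The main step is a within-level orthogonality argument. For distinct $(w,m)\ne(w',m')$ at depth~$k$ in the extended tree $\mathscr{T}_n\times\{0,1\}^n$, the variable $R_{n-k+\varsigma(m)}(w,m)$ is a measurable function of $a(w,m)$ together with the $a$'s and primitive $Z$'s in the closed subtree rooted at $(w,m)$; these subtrees being disjoint, $R(w,m)$ and $R(w',m')$ are independent and each centered, whereas $\pi(w,m),\pi(w',m')$ depend only on $a$'s at depth $<k$ and so are independent of both $R$'s. Hence $\esp[\pi(w,m)R(w,m)\pi(w',m')R(w',m')]=\esp[\pi(w,m)\pi(w',m')]\cdot\esp R(w,m)\cdot\esp R(w',m')=0$. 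Combined with the independence of $\pi(w,m)$ and $R(w,m)$ on the diagonal, and the identity $\sum_w\prod_j\esp a_{w_{j+1}}^2=(\esp\sum_j a_j^2)^k=(1/b)^k$, this yields
\[
\esp S_k^2=(1/b)^k\sum_{m\in\{0,1\}^k}(b-1)^{k-\varsigma(m)}\esp R_{n-k+\varsigma(m)}^2\le C(b-1)^{-n}\lambda^k,
\]
where $\lambda=(1+(b-1)^2)/b$, using $\sum_m(b-1)^{2(k-\varsigma(m))}=(1+(b-1)^2)^k$.

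The crucial algebraic check is that $\lambda/(b-1)=(b^2-2b+2)/(b^2-b)<1$ precisely when $b>2$, which is our standing hypothesis. Setting $\rho:=\lambda/(b-1)<1$, the triangle inequality in $L^2$ gives
\[
\|T_{1,n}\|_2\le\sum_{k=0}^{n-1}\|S_k\|_2\le\sqrt{n}\Bigl(\sum_{k=0}^{n-1}\esp S_k^2\Bigr)^{1/2}\le C'\sqrt{n}\,\rho^{n/2}\longrightarrow 0,
\]
and $L^2$-convergence to $0$ implies convergence in distribution. The main obstacle is the careful bookkeeping of the dependence structure of each $R$, needed to justify the within-level orthogonality; the inter-level cross terms require no such care, as they are absorbed by the Cauchy--Schwarz factor $\sqrt{n}$ thanks to the geometric decay in $k$ of $(b-1)^n\esp S_k^2$.
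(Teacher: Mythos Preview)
Your proof is correct and follows essentially the same approach as the paper's: both compute $\esp R_\ell^2 = (\sigma_{\ell-1}^2 + c_\ell^2)/b = O\bigl((b-1)^{-\ell}\bigr)$, use the within-level orthogonality of the terms in $S_k$ (this is exactly what the paper states when it says ``the variables corresponding to the same $k$ are independent''), sum $\esp S_k^2 = b^{-k}\sum_m (b-1)^{k-\varsigma(m)} \esp R_{n-k+\varsigma(m)}^2$, and then apply the triangle inequality $\|T_{1,n}\|_2 \le \sum_k \|S_k\|_2$. The only difference is cosmetic: the paper sums $\|S_k\|_2 \le C(b-1)^{-n/2}\lambda^{k/2}$ directly as a geometric series to obtain $\|T_{1,n}\|_2 = O(\rho^{n/2})$, whereas you insert an unnecessary Cauchy--Schwarz step yielding the slightly weaker $O(\sqrt{n}\,\rho^{n/2})$; since $\rho<1$ this still tends to~$0$.
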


\begin{proof}  Set $r^2_n = \esp{R_n^2}$. We have
\begin{equation*}
b\, r_n^2 = \sigma_{n-1}^2+\left(
\frac{\sigma_{n-1}}{\sigma_n}-\sqrt{b-1}\right)^2,
\end{equation*}
which together with Formulae~(\ref{ecarttype}) implies that there
exists $C>0$ such that $r_n^2\le C^2(b-1)^{-n}$ for all $n\ge 1$. By
using the independence properties of random variables in~(\ref{T1n})
as well as the triangle inequality, we obtain
\begin{eqnarray*}
\bigl(\esp{T^2_{1,n}}\bigr)^{1/2}&\leq & \sum_{0\le k< n}  \left(
\sum_{|w|=|m|=k}
 (b-1)^{k-\varsigma(m)}r_{n-k+\varsigma(m)}^2\esp \prod_{j=0}^{k-1}
a_{w_{j+1}}^2\right)^{1/2}\\
&=&\sum_{0\le k< n}  \left(\sum_{j=0}^{k} \binom{k}{j}
\esp\left (\sum_{x\ge 1} a_x^2\right )^k(b-1)^{k-j}r_{n-k+j}\right)^{1/2}\\
&=& \sum_{0\le k< n}  \left(\sum_{j=0}^{k} \binom{k}{j}
b^{-k}(b-1)^{k-j}r_{n-k+j}\right)^{1/2}\\
&\le& C\sum_{0\le k< n} \left( \sum_{j=0}^{k} \binom{k}{j}
b^{-k}(b-1)^{k-j}(b-1)^{k-j-n}\right)^{1/2}\\
&=& C\sum_{0\le k< n}  b^{-k/2}(b-1)^{-n/2}\bigl( (b-1)^2+1\bigr)^{k/2}\\
&=&C\, (b-1)^{-n/2} \sum_{0\le k< n} \left
(\frac{(b-1)^2+1}{b}\right)^{k/2}\\ &=&\mathrm{O}\left( \Big
(1 - \frac{b-2}{b(b-1)}\Big )^{n/2}\right ).
\end{eqnarray*}
\end{proof}

\begin{lemma}\label{martingale1}
  $U_n = \esp(T_{2,n}^2\mid {\mathscr A})$ is a non-negative
  martingale. Denote its almost sure limit by $U$.
\end{lemma}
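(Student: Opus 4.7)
The plan is in two steps: first obtain a closed-form expression for $U_n$ by exploiting the independence stated just after~\eqref{r2}, and then verify the martingale identity $\esp(U_{n+1}\mid {\mathscr A}_n)=U_n$ directly from that expression.

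For the first step, I would expand $T_{2,n}^2$ as a double sum indexed by pairs $(w,m),(w',m')\in {\mathscr T}_n\times\{0,1\}^n$ and condition on~${\mathscr A}$. The variables $Z_{\varsigma(m)}(w,m)$ are mutually independent, independent of ${\mathscr A}$, centered (since $\esp W_\ell=1$ implies $\esp Z_\ell=0$) and of unit variance, so every off-diagonal cross-term vanishes while each diagonal term contributes a factor $1$. This yields the closed form
\begin{equation*}
U_n = \sum_{\substack{m\in\{0,1\}^n\\ w\in {\mathscr T}_n}} (b-1)^{n-\varsigma(m)} \prod_{j=0}^{n-1} a_{w_{j+1}}^2(w|_j,m|_j),
\end{equation*}
which is manifestly non-negative. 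Each $a$-variable appearing has arguments of length $j<n$, so $U_n$ is ${\mathscr A}_n$-measurable.

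For the second step I would decompose each index $(w',m')\in {\mathscr T}_{n+1}\times\{0,1\}^{n+1}$ in the analogous formula for $U_{n+1}$ as $w'=wk$, $m'=m\epsilon$, with $(w,m)\in {\mathscr T}_n\times\{0,1\}^n$, $k\ge 1$, $\epsilon\in\{0,1\}$. The identity $\varsigma(m\epsilon)=\varsigma(m)+\mathbf{1}_{\{\epsilon=0\}}$ collapses the sum over $\epsilon$ to the scalar factor $(b-1)^{n-\varsigma(m)}+(b-1)^{n+1-\varsigma(m)}=b\,(b-1)^{n-\varsigma(m)}$. The remaining new factor $\sum_{k\ge 1} a_k^2(w,m)$ comes from a fresh copy of~$a$ independent of~${\mathscr A}_n$; in the conservative case $q=1$ treated in this section one has $\esp \sum_{k\ge 1} a_k^2 = 1/(bq)=1/b$. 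The two $b$-factors cancel and conditioning yields exactly $U_n$. Almost-sure convergence to some limit~$U$ then follows from the martingale convergence theorem applied to this non-negative $L^1$-bounded sequence (one checks along the way that $\esp U_n=1$). The only point requiring care is the bookkeeping of the indices together with the verification that the family $\{a(w,m)\}_{|w|=|m|=n}$ is genuinely independent of~${\mathscr A}_n$; I do not anticipate any real obstacle.
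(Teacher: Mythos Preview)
Your proposal is correct and follows essentially the same approach as the paper: derive the explicit formula for $U_n$ from the independence and normalization of the $Z_{\varsigma(m)}(w,m)$, then split the level-$(n+1)$ index as $(wk,m\epsilon)$ and use $\esp\sum_{k\ge 1}a_k^2=b^{-1}$ together with the factor $b$ coming from the sum over $\epsilon\in\{0,1\}$ to recover $U_n$. The paper states the formula for $U_n$ without justification and performs the same conditional-expectation computation, additionally remarking that $U_n$ is a standard Mandelbrot martingale on the tree $\bigcup_{n\ge 1}{\mathscr T}_n\times\{0,1\}^n$.
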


\proof
We have
\begin{equation*}
  U_n = \sum_{\substack{m\in\{0,1\}^n\\w\in{\mathscr T}_n}}
  (b-1)^{n-\varsigma(m)} \prod_{j=0}^{n-1} a_{w_{j+1}}^2(w|_j,m|_j)
\end{equation*}
and
\begin{eqnarray*}
  \esp(U_{n+1}&&\hspace{-2.5em}\mid {\mathscr A}_n)\\
&=& \sum_{\substack{m\in\{0,1\}^n\\w\in{\mathscr T}_n}}
  \prod_{j=0}^{n-1} a_{w_{j+1}}^2(w|_j,m|_j)
  \esp\sum_{\substack{k\in\{0,1\}\\x\ge 1}}
  (b-1)^{(n+1-\varsigma(mk))} a_x^2(w,m)\\
&=& \sum_{\substack{m\in\{0,1\}^n\\w\in{\mathscr T}_n}}
  \prod_{j=0}^{n-1} a_{w_{j+1}}^2(w|_j,m|_j) \sum_{k\in\{0,1\}}
  b^{-1}(b-1)^{(n+1-\varsigma(mk))}\\
&=& U_n.
\end{eqnarray*}

In fact, $U_n$ is a standard Mandelbrot multiplicative  martingale
built on the tree $\bigcup_{n\ge 1}  {\mathscr T}_n\times \{0,1\}^n$. Indeed,
for each $(w,m)\in \bigcup_{n\ge 1}  {\mathscr T}_n\times \{0,1\}^n$
define the vector
$A(w,m)=(A_{j,\epsilon}(w,m))_{(j,\epsilon)\in{\mathscr
    T}_1\times \{0,1\}}$, where $A_{j,0}(w,m)=a_{j}^2(w,m)$ and $A_{j,1}(m,w)=
(b-1)a_{j}^2(w,m)$. By construction we have
$\esp\sum_{(j,\epsilon)\in {\mathscr
    T}_1\times \{0,1\}}A_{j,\epsilon}(w,m)=1$, and  
$$
U_n= \sum_{\substack{m\in\{0,1\}^n\\w\in{\mathscr
      T}_n}}\prod_{j=0}^{n-1} A_{w_{j+1},m_{j+1}}(w|_{j},m|_{j}). 
$$

\begin{lemma}\label{martingale2} Let $\kappa>0$. Suppose that 
  $u_{\kappa} =
  \left(\esp\sum_{j\ge1}a_j^\kappa\right)^{-1}>0$. Then
$$V_{\kappa,n}= 
  \left(\frac{(b-1)^{\kappa/2}+1}{u_{\kappa}}\right)^{-n}
  \sum_{\substack{m\in\{0,1\}^n\\w\in{\mathscr T}_n}}
  (b-1)^{\frac{\kappa\bigl(n-\varsigma(m)\bigr)}{2}} \prod_{j=0}^{n-1}
  a_{w_{j+1}}^\kappa(w|_j,m|_j),$$
is a martingale.
\end{lemma}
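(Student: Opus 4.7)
The plan is to mimic the proof of Lemma~\ref{martingale1} (replacing the exponent $2$ by $\kappa$), and then observe that the normalising constant is exactly the one that makes the conditional expectation cancel. Concretely, set
$$C_\kappa = \frac{(b-1)^{\kappa/2}+1}{u_\kappa},$$
so that $V_{\kappa,n} = C_\kappa^{-n} S_{\kappa,n}$, where $S_{\kappa,n}$ denotes the sum appearing in the definition. The goal is to show $\esp(S_{\kappa,n+1}\mid \mathscr A_n) = C_\kappa\, S_{\kappa,n}$, from which the martingale property with respect to $(\mathscr A_n)$ follows at once.

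The main step is to split the sum defining $S_{\kappa,n+1}$ according to the last letters: write $m'=mk$ with $k\in\{0,1\}$ and $w'=wx$ with $x\ge 1$, noting that $\varsigma(m')=\varsigma(m)+\mathbf 1_{\{k=0\}}$. This factors each summand as a product of a term measurable with respect to $\mathscr A_n$ (depending only on the $a(w|_j,m|_j)$ for $j<n$) times the new weight $a_x^\kappa(w,m)$, which is independent of $\mathscr A_n$ and has expectation $u_\kappa^{-1}$. Therefore
$$
\esp(S_{\kappa,n+1}\mid\mathscr A_n) = u_\kappa^{-1} \sum_{\substack{m\in\{0,1\}^n\\ w\in\mathscr T_n}}\prod_{j=0}^{n-1} a_{w_{j+1}}^\kappa(w|_j,m|_j) \sum_{k\in\{0,1\}} (b-1)^{\frac{\kappa(n+1-\varsigma(m)-\mathbf 1_{\{k=0\}})}{2}}.
$$
The inner sum over $k$ equals $(b-1)^{\kappa(n-\varsigma(m))/2}\bigl(1+(b-1)^{\kappa/2}\bigr)$, and combining with the $u_\kappa^{-1}$ factor yields exactly $C_\kappa\,(b-1)^{\kappa(n-\varsigma(m))/2}$; the desired identity $\esp(S_{\kappa,n+1}\mid\mathscr A_n)=C_\kappa S_{\kappa,n}$ follows.

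I do not anticipate a serious obstacle: this is a direct computation with no convergence issue (positivity of all weights ensures that every expectation is well defined in $[0,+\infty]$, and the assumption $u_\kappa>0$, i.e.\ $\esp\sum_j a_j^\kappa<\infty$, makes each term of the sum integrable). A conceptual way to read the result, which also confirms the computation, is to notice that $V_{\kappa,n}$ is itself a standard non-negative Mandelbrot martingale on the tree $\bigcup_{n\ge 1}\mathscr T_n\times\{0,1\}^n$, associated with the random weight vector $A(w,m)=\bigl(A_{j,\varepsilon}(w,m)\bigr)_{(j,\varepsilon)\in\mathscr T_1\times\{0,1\}}$ defined by
$$A_{j,0}(w,m)=\frac{u_\kappa}{1+(b-1)^{\kappa/2}}\,a_j^\kappa(w,m),\qquad A_{j,1}(w,m)=\frac{u_\kappa(b-1)^{\kappa/2}}{1+(b-1)^{\kappa/2}}\,a_j^\kappa(w,m),$$
for which $\esp\sum_{j,\varepsilon}A_{j,\varepsilon}=1$ by the very choice of $C_\kappa$. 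The martingale property is then an instance of the classical fact recalled in Section~\ref{cascades}.
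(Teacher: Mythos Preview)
Your proof is correct and follows essentially the same approach as the paper: the paper simply says the result follows from ``a computation similar to the previous one'' (i.e., the proof of Lemma~\ref{martingale1}, which is exactly the splitting-by-last-letter argument you carry out), or alternatively from the observation that $V_{\kappa,n}$ is the Mandelbrot martingale associated with the normalized weights $C_\kappa^{-1}A_{j,\epsilon}^{\kappa/2}(w,m)$, which is precisely your second paragraph. You have merely written out the details the paper omits.
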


\proof This results from a computation similar to the previous one, or
the observation that $V_{\kappa,n}$ is the Mandelbrot martingale
associated with the random vectors $
A_{\kappa}(m,w)=\left(\frac{(b-1)^{\kappa/2}+1}{u_{\kappa}}\right)^{-1}
(A^{\kappa/2}_{j,\epsilon}(w,m))_{(j,\epsilon)\in{\mathscr
    T}_1\times \{0,1\}}$. 

\begin{corollary}\label{smallterms} \begin{enumerate}
\item For $\kappa>0$, if $u_{\kappa}>0$,
\begin{equation*}
  \sup_{\substack{m\in\{0,1\}^n\\w\in{\mathscr T}_n}}
  (b-1)^{(n-\varsigma(m))} \prod_{j=0}^{n-1} a_{w_{j+1}}^2(w|_j,m|_j)
  \le \left(\frac{(b-1)^{\kappa/2}+1}{u_{\kappa}}\right)^{2n/\kappa}
  V_{\kappa,n}^{2/\kappa}. 
\end{equation*}
 \item If  $(b-1)^{\kappa/2}+1<u_{\kappa}$ for
   some $\kappa>0$, with probability~1 
 $$ \lim_{n\to \infty }\sup_{\substack{m\in\{0,1\}^n\\w\in{\mathscr
       T}_n}} (b-1)^{(n-\varsigma(m))} \prod_{j=0}^{n-1}
   a_{w_{j+1}}^2(w|_j,m|_j) = 0.$$
   \item If $\kappa>2$, $(b-1)^{\kappa/2}+1<u_{\kappa}$ and $\esp
     (\sum_{j\ge 1}a_j^2)^{\kappa/2}<\infty$, then $\mathbb P(U>0)>0$,
     and  $\mathbb P(U>0)=1$ if and only if $\mathbb P(\#\{j\ge 1:
     a_j>0\}\ge 1)=1$.  
\end{enumerate}
\end{corollary}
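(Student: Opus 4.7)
\medskip

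For part (1), the plan is to rewrite the supremum in terms of the sum appearing in $V_{\kappa,n}$. Denote $T_{w,m}=(b-1)^{n-\varsigma(m)} \prod_{j=0}^{n-1} a_{w_{j+1}}^2(w|_j,m|_j)$. Then $T_{w,m}^{\kappa/2}=(b-1)^{\kappa(n-\varsigma(m))/2}\prod_{j=0}^{n-1} a_{w_{j+1}}^{\kappa}(w|_j,m|_j)$, so by definition of $V_{\kappa,n}$,
\begin{equation*}
\sum_{\substack{m\in\{0,1\}^n\\ w\in{\mathscr T}_n}} T_{w,m}^{\kappa/2} = \left(\frac{(b-1)^{\kappa/2}+1}{u_{\kappa}}\right)^{n} V_{\kappa,n}.
\end{equation*}
Since every term is non-negative, $\sup_{w,m} T_{w,m}^{\kappa/2}$ is bounded by the above sum; raising to the power $2/\kappa$ gives the stated inequality.

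For part (2), I would invoke Lemma~\ref{martingale2}: $(V_{\kappa,n})_{n\ge 1}$ is a non-negative martingale with $\esp V_{\kappa,n}=1$, so by Doob's convergence theorem it converges almost surely to a finite limit. In particular, along almost every trajectory the sequence $V_{\kappa,n}^{2/\kappa}$ is bounded. Under the hypothesis $(b-1)^{\kappa/2}+1<u_\kappa$, the deterministic factor $\bigl((b-1)^{\kappa/2}+1)/u_\kappa\bigr)^{2n/\kappa}$ decays geometrically. Combining these two facts with the inequality of part (1) yields the almost sure convergence to $0$.

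For part (3), the key observation (already made in the text after Lemma~\ref{martingale1}) is that $U_n$ is itself a Mandelbrot martingale built on the tree $\bigcup_{n}{\mathscr T}_n\times\{0,1\}^n$ with weights $A_{j,\epsilon}(w,m)$, where $A_{j,0}=a_j^2$ and $A_{j,1}=(b-1)a_j^2$. I would apply the Kahane--Peyri\`ere / Liu criterion \eqref{liu} (extended to multi-type trees; equivalently, Biggins' $L^p$-criterion) with exponent $p=\kappa/2>1$. The two conditions to verify are
\begin{equation*}
\esp\sum_{(j,\epsilon)} A_{j,\epsilon}^{\kappa/2} = \frac{(b-1)^{\kappa/2}+1}{u_\kappa}<1 \quad\text{and}\quad \esp\Bigl(\sum_{(j,\epsilon)}A_{j,\epsilon}\Bigr)^{\kappa/2}=b^{\kappa/2}\,\esp\Bigl(\sum_{j\ge 1} a_j^2\Bigr)^{\kappa/2}<\infty,
\end{equation*}
both of which are exactly our hypotheses. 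Hence $U_n$ is bounded in $L^{\kappa/2}$, so uniformly integrable, and $\esp U=\lim\esp U_n=1$, giving $\Proba(U>0)>0$.

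For the dichotomy, I would use the classical fact that for a non-degenerate Mandelbrot martingale the event $\{U=0\}$ coincides (modulo null sets) with the extinction event of the underlying Galton--Watson tree whose offspring number at each node is $\#\{(j,\epsilon)\in{\mathscr T}_1\times\{0,1\}:A_{j,\epsilon}>0\}=2\#\{j\ge 1:a_j>0\}$. This offspring number is a.s.\ at least $1$ (so extinction cannot occur) if and only if $\Proba(\#\{j\ge 1:a_j>0\}\ge 1)=1$. The main (only) subtlety is invoking the correct form of Liu's criterion for trees of the general shape ${\mathscr T}_n\times\{0,1\}^n$ with the non-i.i.d.\ weights $A_{j,\epsilon}$; once the weights are arranged into a single vector indexed by $(j,\epsilon)$ the classical theorem applies verbatim.
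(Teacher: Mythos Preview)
Your proof is correct and follows essentially the same approach as the paper: bounding the supremum by the full sum defining $V_{\kappa,n}$ for part~(1), combining this with the a.s.\ convergence of the non-negative martingale $V_{\kappa,n}$ and the geometric decay of the prefactor for part~(2), and recognizing for part~(3) that the hypotheses $(b-1)^{\kappa/2}+1<u_\kappa$ and $\esp(\sum_j a_j^2)^{\kappa/2}<\infty$ are precisely the Kahane--Peyri\`ere/Liu conditions for the Mandelbrot martingale $U_n$ to be bounded in $L^{\kappa/2}$, with the dichotomy on $\{U>0\}$ coming from the standard extinction argument. The paper's own proof is terser but identical in substance.
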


\proof Let~$V_\kappa$ be the a.s.\ limit of the non-negative
martingale~$V_{\kappa,n}$ of Lemma~\ref{martingale2}. Since~$V_\kappa$
is integrable, it is a.s.\ finite. So
\begin{equation*}
  \sup_{\substack{m\in\{0,1\}^n\\w\in{\mathscr T}_n}}
  (b-1)^{\kappa(n-\varsigma(m))/2} \prod_{j=0}^{n-1}  a_{w_{j+1}}^{\kappa}(w|_j,m|_j)
  \le \left(\frac{(b-1)^{\kappa/2}+1}{u_{\kappa}}\right)^n V_{\kappa,n}.
\end{equation*}
This accounts for the first and second assertion. 

For the third assertion, we notice that our assumptions are exactly
those required for the Mandelbrot martingale $U_n$ to be bounded in
$L^{\kappa/2}$, hence have a non degenerate limit: \\
$\esp\sum_{(j,\epsilon)\in{\mathscr
    T}_1\times \{0,1\}}A^{\kappa/2}_{j,\epsilon}(w,m)<1$ and
$\esp(\sum_{(j,\epsilon)\in{\mathscr
    T}_1\times \{0,1\}}A_{j,\epsilon}(w,m))^{\kappa/2}<\infty$. The assertion on the
possibility that $U_n$ vanishes is then standard.

\begin{theorem}\label{thm13}
If $(b-1)^3 < (u-1)^2$ and  $\mu\in {\mathscr
  D}_1\setminus\{\delta_1\}$, the sequence 
$\displaystyle \sigma_n^{-1}(W_n-1)$ converges in distribution to
$\sqrt{U}\xi$, where $\xi$ is a standard normal law independent of~$U$.
\end{theorem}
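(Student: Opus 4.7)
The plan is to exploit the decomposition $Z_n = T_{1,n} + T_{2,n}$ from Equations~\eqref{T1n}--\eqref{r2}: since Proposition~\ref{prop1} already gives $T_{1,n}\to 0$ in $L^2$, it suffices to prove that $T_{2,n}$ converges in distribution to $\sqrt{U}\,\xi$. The key observation is that, conditionally on the $\sigma$-field ${\mathscr A}$, the variables $Z_{\varsigma(m)}(w,m)$ indexing $T_{2,n}$ are mutually independent, each of mean $0$ and variance $1$, while the coefficients
$$
c_{w,m}^{(n)} = (b-1)^{\frac{n-\varsigma(m)}{2}} \prod_{j=0}^{n-1} a_{w_{j+1}}(w|_j,m|_j)
$$
are ${\mathscr A}$-measurable, and $\sum_{(w,m)} (c_{w,m}^{(n)})^2 = U_n \to U$ almost surely by Lemma~\ref{martingale1}. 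This sets the stage for a conditional central limit theorem with random limiting variance $U$.

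I would then carry out a conditional Lyapunov expansion of the characteristic function. Two ingredients make this tractable. First, Corollary~\ref{bound3} provides $M := \sup_\ell \esp|Z_\ell|^3 < \infty$, which yields the uniform Taylor bound $\bigl|\log \phi_\ell(s) + s^2/2\bigr|\le C M |s|^3$ for $|s|$ small, where $\phi_\ell$ is the characteristic function of $Z_\ell$. Second, the hypothesis $(b-1)^3<(u-1)^2$ rewrites as $(b-1)^{3/2}+1<u=u_3$, which is exactly the hypothesis of Corollary~\ref{smallterms}(2) applied with $\kappa=3$, yielding $\max_{(w,m)} c_{w,m}^{(n)}\to 0$ almost surely. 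Conditioning on ${\mathscr A}$ and summing the expansions over the independent factors produces, for each fixed $t\in \mathbb R$,
$$
\log \esp\bigl(\e^{\mi t T_{2,n}}\mid {\mathscr A}\bigr) = -\tfrac{t^2}{2}\,U_n + \mathrm{O}\!\Bigl(|t|^3\max_{(w,m)} c_{w,m}^{(n)}\,U_n\Bigr),
$$
which converges almost surely to $-t^2 U/2$. Dominated convergence (the conditional characteristic function has modulus bounded by $1$) then gives $\esp\e^{\mi t T_{2,n}}\to \esp\e^{-t^2 U/2}$, the characteristic function of $\sqrt{U}\,\xi$ with $\xi$ standard normal independent of $U$. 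Combined with $T_{1,n}\to 0$ in probability, Slutsky's lemma yields $Z_n\to \sqrt{U}\,\xi$ in distribution.

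The most delicate step is ensuring that the Taylor remainder is uniform in the scale $\ell=\varsigma(m)$ and survives the summation over the large index set ${\mathscr T}_n\times\{0,1\}^n$. This is precisely what the combination of the $\ell$-uniform third moment bound (Corollary~\ref{bound3}) and the almost-sure smallness of $\max_{(w,m)} c_{w,m}^{(n)}$ (Corollary~\ref{smallterms}(2)) is tailored to handle, and it clarifies why the sharp hypothesis $(b-1)^3<(u-1)^2$ appears in the statement: it is exactly the threshold at which the cubic error can be absorbed into $U_n\to U$.
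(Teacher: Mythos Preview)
Your proposal is correct and follows essentially the same route as the paper: reduce to $T_{2,n}$ via Proposition~\ref{prop1}, condition on the weights, expand the conditional characteristic function using the uniform third-moment bound of Corollary~\ref{bound3}, and control the cubic remainder via Corollary~\ref{smallterms} with $\kappa=3$. The only cosmetic difference is that the paper tracks the remainder through the explicit martingale $V_{3,n}$ and the events $\{V_{3,n}\le n\}$ (following Breiman's presentation of Lindeberg's argument), whereas you invoke the almost-sure statement of Corollary~\ref{smallterms}(2) directly; both lead to the same conclusion by bounded convergence.
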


\proof At first we notice that the assumptions of
Corollary~\ref{smallterms} are satisfied with $\kappa=3$. Indeed
$(b-1)^3 < (u-1)^2$ is just $(b-1)^{3/2} +1<u_3$, and $\esp
(\sum_{j\ge 1} a_j^{2})^{3/2}\le \esp (\sum_{j\ge 1} a_j)^{3}<\infty$
due to our assumption on $u$, $v$, and $w$.  Then, due to
Proposition~\ref{prop1} if suffices to prove the same 
convergence for the sequence $(T_{2,n})_{n\ge 0}$.

We adapt a proof given by Breiman~\cite{breiman} for
Lindeberg's theorem. First we remark that, if $X$ is a centered random
variable with standard deviation~$\sigma$ and $t\in {\mathbb R}$, one
has $|\esp \e^{\mi tX}-1|\le \frac{t^2\sigma^2}{2}$, and, if $\esp
|X|^3$ is finite, $|\esp \e^{\mi tX}-1+\frac{\sigma^2t^2}{2}| \le
\frac{|t^3| \esp |X|^3}{6}$. Also, if $|z|\le 1/2$, $|\log (1+z)-z|
\le |z|^2$.

For~$n\ge 1$, let $A_n = \{V_{3,n}\le n\}$. Since $V_{3,n}$ has a
limit finite with probability~1, the variable $\ind_{A_n}$ converges
towards~1 with probability~1.

For $w\in {\mathscr T}_n$ and $m\in \{0,1\}^n$, set
$$f_{n,w,m}(t) = \esp \left( \e^{\mi t (b-1)^{(n-\varsigma(m))/2}
  Z_{\varsigma(m)}(w,m)\prod_{j=0}^{n-1} a_{w_{j+1}}(w|_j,m|_j)}
\Big\vert {\mathscr A}_n\right).$$
 According to Corollary~\ref{smallterms} applied with $\kappa=3$, for
 all~$t$, we have 
\begin{equation*}
\sup_{\substack{m\in\{0,1\}^n\\w\in{\mathscr T}_n}}|f_{n,w,m}(t)-1| \le t^2
\left( \frac{(b-1)^{3/2}+1}{u}\right)^{2n/3}V_{3,n}^{2/3}\le n^{2/3}t^2
\left( \frac{(b-1)^{3/2}+1}{u}\right)^{2n/3}
\end{equation*}
on~$A_n$. So, since $(b-1)^3<u^2$, $t$ being fixed, for~$n$ large
enough we have
\begin{equation*}
\sup_{\substack{m\in\{0,1\}^n\\w\in{\mathscr T}_n}}|f_{n,w,m}(t)-1| \le \frac{1}{2}
\end{equation*}
and therefore
\begin{equation*}
\abs{\log f_{n,w,m}(t)-(f_{n,w,m}(t)-1)} \le |f_{n,w,m}(t)-1|^2.
\end{equation*}
But
\begin{multline*}
\left|f_{n,w,m}(t)-1+\frac{t^2}{2}(b-1)^{(n-\varsigma(m))} \prod_{j=0}^{n-1}
a_{w_{j+1}}^2(w|_j,m|_j)\right|\\
\le \frac{|t|^3}{6} (b-1)^{3(n-\varsigma(m))/2} \prod_{j=0}^{n-1}
a_{w_{j+1}}^3(w|_j,m|_j)\sup_{j\ge 0} \esp |Z_j|^3.
\end{multline*}
So, if we set $\displaystyle g_n(t) =
\sum_{\substack{m\in\{0,1\}^n\\w\in{\mathscr T}_n}} \log
f_{n,w,m}(t)$\ \ and\ \ $C = \sup_{j\ge 0} \esp |Z_j|^3$, for
fixed~$t$, for~$n$ large enough, on $A_n$,
\begin{equation*}
  \left| g_n(t)+\frac{t^2U_n}{2}\right| \le
  \sum_{\substack{m\in\{0,1\}^n\\w\in{\mathscr T}_n}}
  |f_{n,w,m}(t)-1|^2 + C\,\left(\frac{(b-1)^{3/2}+1}{u}\right)^n
  |t|^3V_{3,n}.
\end{equation*}
By writing $\sum |f_{n,w,m}-1|^2 \le \bigl( \sup |f_{n,w,m}-1|\bigr)\sum
|f_{n,w,m}-1|$ one gets on 
$A_n$
\begin{multline*}
  \left|g_n(t) + \frac{t^2U_n}{2}\right| \le\\ t^4\left(
    \frac{(b-1)^{3/2}+1}{u}\right)^{2n/3}V_{3,n}^{2/3}U_n +
  C\,\left(\frac{(b-1)^{3/2}+1}{u}\right)^n |t|^3V_{3,n}.
\end{multline*}

We have obtained 
$$ \esp\left(e^{\mi t T_{2,n}}\right)=\esp\left
(e^{-\frac{t^2U_n}{2}+r_n(t)}\mathbf{1}_{\{V_{3,n}\le
  n\}}\right)+\esp\left(e^{\mi t T_{2,n}}\mathbf{1}_{\{V_{3,n}>
  n\}}\right),
$$ with $|r_n(t)|\le t^4\left(
\frac{(b-1)^{3/2}+1}{u}\right)^{2n/3}n^{2/3}U_n + C
\left(\frac{(b-1)^{3/2}+1}{u}\right)^n |t|^3 n$ on $A_n$.
  
  Since both $U_n$ and $V_n$ converge almost surely and
  $\frac{(b-1)^{3/2}+1}{u}<1$, we obtain $\lim_{n\to\infty}\esp\left
  (e^{-\frac{t^2U_n}{2}+r_n(t)}\mathbf{1}_{\{V_{3,n}\le
    n\}}\right)=\esp\left (e^{-\frac{t^2U}{2}}\right )$ and
  $\lim_{n\to\infty}\esp\left(e^{\mi t T_{2,n}}\mathbf{1}_{\{V_{3,n}>
    n\}}\right)=0$ by the bounded convergence theorem.

\section{Central limit theorem in the complex case}\label{complex}

In this section, we suppose~$q=1$ and $\beta>1$, and study the
convergence in law of 
$(b-1)^{n/2}\bigl( W_n-1\bigr))$ when $W_0$ is a complex valued random
variable.

The fact $q=1$ implies the relation $\displaystyle \frac{1}{u} +
\frac{3}{v} + \frac{1}{w} = 1.$

\subsection{Variances and covariances}\label{clt61}

According to~\eqref{mmnt4} and~\eqref{mmnt3}, if $\esp |W_0^2| < b-1$,
we have
\begin{equation*}
\esp |W_{n+1}^2| = \frac{b-1}{b-\esp |W_n^2|} \text{\quad and \quad}
\esp W_{n+1}^2 = \frac{b-1}{b-\esp W_n^2},
\end{equation*}
so, both $\esp |W_{n+1}^2|$ and $\esp W_{n+1}^2$ converge to the fixed
point~1 of $\varphi$. Due to~\eqref{birap}, $(b-1)^n\bigl( \esp
W_n^2-1\bigr)$ and $(b-1)^n\bigl( \esp |W_n^2|-1\bigr)$ have explicit limits
when~$n$ goes to~$\infty$.

It results that, if we set
$$x_n = \esp (\Re W_n)^2,\quad y_n = \esp \bigl( \Im W_n\bigr)^2,
\text{\quad and \quad} z_n = \esp (\Re W_n)(\Im W_n)=\esp (\Re
W_n-1)(\Im W_n)$$ there exist~$x$,~$y$, and~$z$ (depending on $W_0$)
such that
\begin{equation}\label{covariances}
  \lim\, (b-1)^n (x_n-1) = x,\quad \lim\, (b-1)^n y_n =y, \text{\quad and
    \quad}\lim\, (b-1)^n z_n = z.
\end{equation}

Set $u=\esp (\Re W_0)^2=\esp (\Re W_0-1)^2+1$, $v=\esp (\Im W_0)^2$
and $w=\esp (\Re W_0)(\Im W_0)=\esp (\Re W_0-1)(\Im W_0)$. Notice that
by Cauchy-Schwarz inequality we must have $w^2\le v(u-1)$.  Using a
formal computing software (e.g. Maple) shows that one has
$$ (b-2)^{-3}\, \mathrm{det}\, \begin{pmatrix}x&z\\z&y\end{pmatrix} =
  \frac{v(bu-u^2+v^2-b+1)-(b-2u-2v)w^2}{(b-1-u-v)^2\bigl(4w^2+(b-u+v-1)^2
    \bigr)}.
$$ 

It is easily seen that the denominator of this last expression is
positive. Its numerator, call it $R(u,v,w)$, assumes its minimum
for~$w=0$ if $u+v\ge 2b$, and for $w^2=v(u-1)$ otherwise. We have
\begin{equation*}
R(u,v,0) = \frac14\,v(b-2)^3\bigl((2v-2u-b)(2v+2u+b)+ (b-2)^2\bigr)
\end{equation*}
and
\begin{equation*}
R(u,v,\pm\sqrt{v(u-1)}) = v(u+v-1)^2(b-2)^3.
\end{equation*}

It results that this determinant is positive, except if~$v=0$. We thus
have proven the following proposition.

\begin{proposition}
The matrix $\begin{pmatrix}x&z\\z&y\end{pmatrix}$ is definite positive
  if and only if and only if $W_0$ is not almost surely real.
\end{proposition}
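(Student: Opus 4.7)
The plan is to verify the biconditional by handling the two implications separately, leaning on the rational expression for $\det M$ already derived above the statement. For the ``only if'' direction, I would observe by a straightforward induction on the recursion \eqref{foncn} that if $W_0$ is almost surely real then every $W_n$ is also almost surely real (the variables $W_n(j), W_{n+1}(j)$ appearing on the right-hand side are independent copies of $W_n, W_{n+1}$, all real by the inductive hypothesis). Hence $y_n = z_n = 0$ for all $n$, and in the limit $y = z = 0$, so $M$ has a vanishing column and cannot be positive definite.

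For the ``if'' direction, note first that $x \ge 0$ and $y \ge 0$ by construction (using $\esp W_n = 1$, so $x_n - 1 = \var(\Re W_n) \ge 0$ and $y_n = \var(\Im W_n) \ge 0$). By Sylvester's criterion it then suffices to show that $\det M > 0$ and $x + y > 0$, as the two together force $x, y > 0$ (if $x = 0$ then $\det M = -z^2 \le 0$). Trace positivity follows quickly from the cross-ratio identity \eqref{birap} applied to $\varphi$ with $\alpha = 1$, $\beta = b - 1$:
\begin{equation*}
x + y = \lim_n (b-1)^n\bigl(\esp |W_n|^2 - 1\bigr) = \frac{(b-2)\bigl(\esp|W_0|^2 - 1\bigr)}{b - 1 - \esp|W_0|^2},
\end{equation*}
which is strictly positive because $W_0$ not almost surely real forces $\esp|W_0|^2 > |\esp W_0|^2 = 1$, while $b - 1 - \esp|W_0|^2 > 0$ is our standing assumption on the initial law.

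For the determinant, the plan is to exploit the identity already derived by symbolic computation. Its denominator $(b-1-u-v)^2\bigl(4w^2 + (b - u + v - 1)^2\bigr)$ is strictly positive (the first factor by $u + v < b - 1$; the two terms of the second cannot vanish simultaneously without forcing $v = u - b + 1 < 0$, which is impossible). Since $R(u,v,w) = v(bu - u^2 + v^2 - b + 1) - (b - 2u - 2v)w^2$ is affine in $w^2$, and since the Cauchy--Schwarz inequality applied to $\Re W_0 - 1$ and $\Im W_0$ confines $w^2$ to the interval $[0, v(u-1)]$, the minimum of $R$ over admissible values is attained at one of the two endpoints. A short calculation, aided by the factorization $bu - u^2 - b + 1 = (u-1)(b - 1 - u)$, yields $R(u,v,0) = v\bigl(v^2 + (u-1)(b-1-u)\bigr)$ and $R\bigl(u,v,\pm\sqrt{v(u-1)}\bigr) = v(u + v - 1)^2$; both are strictly positive whenever $v > 0$ and $u + v > 1$, conditions precisely equivalent to $W_0$ not being almost surely real (given $1 \le u < b - 1$).

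The main obstacle is really the initial derivation of the rational expression for $\det M$, which is a formal computing exercise carried out in the text; once that formula is in hand, the remainder is essentially the two endpoint evaluations of $R$ together with the easy trace bound.
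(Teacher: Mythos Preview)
Your proof is correct and follows essentially the same route as the paper: use the rational expression for $\det M$ obtained by symbolic computation, check that its denominator is positive, and minimise the numerator $R(u,v,w)$ over the admissible range $0\le w^2\le v(u-1)$ by evaluating at the two endpoints. Your endpoint values $R(u,v,0)=v\bigl(v^2+(u-1)(b-1-u)\bigr)$ and $R(u,v,\pm\sqrt{v(u-1)})=v(u+v-1)^2$ are the correct ones; the paper's displayed formulas carry a spurious $(b-2)^3$ factor and a sign slip, but its conclusion agrees with yours. You are also a bit more complete than the paper: you explicitly treat the ``only if'' direction (by induction on~\eqref{foncn}) and you separately verify $x+y>0$ so that $\det M>0$ genuinely forces positive definiteness, whereas the paper leaves these points implicit.
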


\subsection{Central limit theorem}\label{clt62} 

In this context, if $Y=\T W$, we have
\begin{equation*}
\esp Y^3 = \esp W^3\esp Y^3/u+3\esp W^2\esp Y^2/v + 1/w
\end{equation*}
and
\begin{eqnarray*}
\esp |Y|^3 &\le& \esp |W|^3\esp |Y|^3/u+3\esp
|W|^2\esp |Y|^2\esp |W|\esp |Y|/v + \bigl( \esp |W|\esp
|Y|\bigr)^3/w\\
&\le& \frac{1}{u}\esp |W|^3\esp |Y|^3 +\frac{u-1}{u}\bigl( \esp
|W|^2\esp |Y|^2\bigr)^{3/2}. 
\end{eqnarray*}
Finally 
\begin{equation}\label{majmnt3}
  \esp |Y|^3 \le \frac{(u-1)\bigl( \esp |W|^2\esp
    |Y|^2\bigr)^{3/2}}{u-\esp |W^3|}. 
\end{equation}

This time for $1\le \theta \le \beta$ we use the function
\begin{equation}\label{newpsi}
\psi_\theta(t) = \frac{(u-1)\bigl( \theta\varphi(\theta)\bigr)^{3/2}}{u-t}.
\end{equation}
It has fixed points if $\theta\varphi(\theta)\le \left(
  \frac{u^2}{4(u-1)}\right)^{2/3}$, i.e., if $\theta\le \theta_a$ for
some critical real number $\theta_a$. As $\psi_1$ has two fixed 
points~1 and~$u-1$, one has $\theta_a>1$.
If $\theta\le \theta_a$, let $\gamma_\pm(\theta)$ be the fixed
points. Consider the sets
$$
\Omega = \left\{(\theta,t)\ :\theta< \min(\beta,\theta_a), t\le
  \gamma_{{}_+}(\theta)\right\},
  $$ 
 and 
\begin{equation*}
{\mathscr D}= \left\{\mu\in {\mathscr P}\ :\ \moment_1(\mu)=1,\,
  (\moment_2(\mu),\moment_3(\mu)\in \Omega)\right\}.
\end{equation*} Arguments similar to previous ones show
that if $W_0$ 
is distributed according to $\mu\in{\mathscr D}$, then, $\esp |W_n|^3<
\infty$ for all~$n$, and $\limsup \esp |W_n|^3\le 1$. Since $\esp
|W_n|^3\ge 1$, we have 
\begin{equation*}
  \lim \esp |W_n|^3 = \limsup \esp |W_n|^3 = 1.
\end{equation*}

Lemma~\ref{Bound3} and its corollary have the following counterparts.

\begin{lemma}
  There exists $C$ such that for all $W$ whose distribution is in
  ${\mathscr D}\setminus\{\delta_1\}$ one has
\begin{equation*}
  (u-\esp |W|^3)\, \esp Z_Y^3 \le (b-1)^{3/2} \esp Z_W^3 + C\bigl(
  (\esp Z_W^3)^{2/3} + (\esp Z_W^3)^{1/3} + 1\bigr),
\end{equation*}
where $Y=\T W$, $Z_W = |W-1|/\sigma_W$, and $Z_W = |Y-1|/\sigma_Y$.
\end{lemma}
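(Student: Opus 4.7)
The plan is to follow the outline of the proof of Lemma~\ref{Bound3} given in~\cite{BPW}, adjusting the few steps that used non-negativity of~$W$. Since $q=1$ we have $\sum_j a_j=1$ almost surely, whence $Y-1=\sum_{j\ge 1}a_j(W(j)Y(j)-1)$. Combining this with $|Y-1|^3\le(\sum_j a_j|W(j)Y(j)-1|)^3$, expanding the cube and taking expectation using independence of the $W(j)Y(j)-1$ from each other and from $a$, one obtains
\begin{equation*}
\esp|Y-1|^3\le\frac{\esp|WY-1|^3}{u}+\frac{3\esp|WY-1|^2\esp|WY-1|}{v}+\frac{(\esp|WY-1|)^3}{w}.
\end{equation*}
Cauchy--Schwarz bounds the last two terms by $\frac{u-1}{u}(\esp|WY-1|^2)^{3/2}$ (using $\frac{3}{v}+\frac{1}{w}=\frac{u-1}{u}$). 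Moreover $\esp|WY-1|^2=\esp|W|^2\esp|Y|^2-1$, and since $\sigma_Y^2=\sigma_W^2/(b-1-\sigma_W^2)$ one checks that $\esp|WY-1|^2/\sigma_Y^2=b$ identically, so after division by $\sigma_Y^3$ this remainder contributes only the bounded constant $(u-1)b^{3/2}$.

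The genuinely new step is the bound on $\esp|WY-1|^3$, for which I would use the complex triangle inequality $|WY-1|=|W(Y-1)+(W-1)|\le|W||Y-1|+|W-1|$ (in place of the real-valued $W|Y-1|+|W-1|$), cube, and take expectation. H\"older's inequality at the exponent pairs $(3/2,3)$ and $(3,3/2)$ gives the mixed-moment bounds
\begin{equation*}
\esp(|W|^2|W-1|)\le(\esp|W|^3)^{2/3}(\esp|W-1|^3)^{1/3},\quad\esp(|W|\,|W-1|^2)\le(\esp|W|^3)^{1/3}(\esp|W-1|^3)^{2/3},
\end{equation*}
and combined with $\esp|W-1|^3=\sigma_W^3\esp Z_W^3$ and $\esp|Y-1|\le\sigma_Y$ one obtains
\begin{equation*}
\begin{aligned}
\esp|WY-1|^3\le{}&\esp|W|^3\esp|Y-1|^3+\sigma_W^3\esp Z_W^3\\
&{}+3(\esp|W|^3)^{2/3}\sigma_W\sigma_Y^2(\esp Z_W^3)^{1/3}+3(\esp|W|^3)^{1/3}\sigma_W^2\sigma_Y(\esp Z_W^3)^{2/3}.
\end{aligned}
\end{equation*}

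Substituting this into the previous display, moving the term $\esp|W|^3\esp|Y-1|^3/u$ to the left-hand side and multiplying by $u$, then dividing by $\sigma_Y^3$ and using $\sigma_W/\sigma_Y=\sqrt{b-1-\sigma_W^2}$ to simplify each ratio, the principal coefficient of $\esp Z_W^3$ becomes $(b-1-\sigma_W^2)^{3/2}\le(b-1)^{3/2}$, the Cauchy--Schwarz remainder stays at $(u-1)b^{3/2}$, and the coefficients of $(\esp Z_W^3)^{2/3}$ and $(\esp Z_W^3)^{1/3}$ become bounded expressions in $b$ and $\esp|W|^3$. Since $\mu\in{\mathscr D}$ forces $\esp|W|^3=\moment_3(\mu)$ to be uniformly bounded on $\Omega$ (by $\gamma_{{}_+}(\moment_2(\mu))$), all non-principal coefficients can be absorbed into a single constant~$C$, giving the claimed inequality. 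The main obstacle is the bookkeeping in the expansion of $\esp|WY-1|^3$: one has to choose the H\"older exponents so that sub-leading contributions land exactly at the powers $(\esp Z_W^3)^{2/3}$ and $(\esp Z_W^3)^{1/3}$, rather than at $(\esp Z_W^3)^{4/3}$ or higher, which is precisely what the pairings $(3/2,3)$ and $(3,3/2)$ ensure.
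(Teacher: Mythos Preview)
Your reconstruction is correct and is precisely the adaptation of the argument from~\cite{BPW} that the paper has in mind (the paper gives no separate proof here, just refers back to Lemma~\ref{Bound3}, whose proof in turn ``follows the same lines as in~\cite{BPW}''). The only modification needed in the complex case is the one you identified: replacing $W|Y-1|$ by $|W|\,|Y-1|$ in the decomposition $WY-1=W(Y-1)+(W-1)$, and your H\"older pairings, the identity $\esp|WY-1|^2/\sigma_Y^2=b$, and the uniform bound $\moment_3(\mu)\le\gamma_{{}_+}(1)=u-1$ on~$\mathscr{D}$ are all exactly right.
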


\begin{corollary}
  If $(b-1)^3 < (u-1)^2$ and $\mu\in {\mathscr
    D}\setminus\{\delta_1\}$, then
$$\sup_n \int (b-1)^{3n/2}|x-1|^3 \T^n\mu(\dif x) < \infty.$$
\end{corollary}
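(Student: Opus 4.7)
The plan is to mirror the real-case argument used for Corollary~\ref{bound3}, with the preceding Lemma in place of Lemma~\ref{Bound3}. First set $\sigma_n = (\esp|W_n-1|^2)^{1/2}$. Since $\esp W_n = 1$, one has $\sigma_n^2 = \esp|W_n|^2-1$, and \eqref{mmnt4} with $q=1$ shows that $\sigma_n^2$ obeys exactly the recurrence~\eqref{ecarttype}. Consequently $\sigma_n^{-3}$ and $(b-1)^{3n/2}$ are comparable up to a multiplicative constant depending only on $\sigma_0$, and the claim reduces to showing $\sup_n z_n < \infty$, where $z_n := \esp Z_{W_n}^3 = \sigma_n^{-3}\esp|W_n-1|^3$.

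Applying the Lemma to $W = W_n$, $Y = W_{n+1}$ gives the key recursion
\begin{equation*}
z_{n+1} \le \frac{(b-1)^{3/2}}{u-\esp|W_n|^3}\,z_n + \frac{C}{u-\esp|W_n|^3}\bigl(z_n^{2/3} + z_n^{1/3} + 1\bigr).
\end{equation*}
For this bound to propagate along the whole orbit we need $\moment_3(\T^n\mu) < u$ at every step and $z_0 < \infty$: both are guaranteed by $\mu \in \mathscr D$ together with the $\T$-invariance of $\mathscr D$ established just above (via the auxiliary homography $\psi_\theta$ of~\eqref{newpsi}). Moreover, it was shown right before the Lemma that $\lim \esp|W_n|^3 = 1$, so the denominator $u - \esp|W_n|^3$ eventually lies in a neighborhood of $u-1 > 0$.

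The hypothesis $(b-1)^3 < (u-1)^2$ is exactly $(b-1)^{3/2} < u-1$, so combined with the previous convergence it yields $\rho \in (0,1)$, $K > 0$ and $n_0$ such that
\begin{equation*}
z_{n+1} \le \rho\,z_n + K\bigl(z_n^{2/3} + z_n^{1/3} + 1\bigr)\qquad (n \ge n_0).
\end{equation*}
Since $z_n^{2/3}+z_n^{1/3}+1 = o(z_n)$ as $z_n \to \infty$, there is $M>0$ such that $z_n \ge M$ forces $z_{n+1} \le z_n$; hence $(z_n)_{n \ge 0}$ is bounded above by $\max\bigl(z_0,\ldots,z_{n_0},\, M + K(M^{2/3}+M^{1/3}+1)\bigr)$, giving the conclusion after translating back through the equivalence $\sigma_n^{-3} \asymp (b-1)^{3n/2}$.

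The main obstacle in this scheme is establishing the preceding Lemma (the complex analogue of Lemma~\ref{Bound3})---once it is available, the iteration above is essentially a tail-estimate argument identical to the real case. The only non-routine checks are that $\mu \in \mathscr D$ keeps the second and third moments of $\T^n\mu$ in the region where $\psi_\theta$ and $\varphi$ contract, and that $\esp|W_n|^3 \to 1$ rather than accumulating near $u$; both are handled by the analysis of $\Omega$ carried out just before the corollary.
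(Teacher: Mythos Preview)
Your argument is correct and is precisely the approach the paper intends: the paper does not spell out this proof, stating only that Lemma~\ref{Bound3} and Corollary~\ref{bound3} follow the same lines as~\cite{BPW} and that the complex versions are ``counterparts''; your iteration of the lemma-inequality, combined with $\esp|W_n|^3\to 1$ and $(b-1)^{3/2}<u-1$, is exactly that scheme. The reduction from $(b-1)^{3n/2}$ to $\sigma_n^{-3}$ via~\eqref{ecarttype} and the tail boundedness argument are handled correctly.
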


Estimates similar to those used in the non-negative case now yield.

\begin{theorem}\label{thm16}
  If $(b-1)^3 < (u-1)^2$ and the distribution of~$W_0$ lies
  in~${\mathscr D}\setminus\{\delta_1\}$,
  $(b-1)^{n/2}\bigl(W_n-1\bigr)$ converges in law to $\sqrt{U} \xi$,
  where $\xi$ is a centered normal vector independent of $U$ whose
  covariance matrix is~$\begin{pmatrix}x&z\\z&y\end{pmatrix}$.
  \end{theorem}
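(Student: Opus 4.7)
The plan is to mirror the proof of Theorem~\ref{thm13}, with two changes: replace the pathwise normalization $\sigma_n^{-1}$ by the deterministic factor $(b-1)^{n/2}$, and replace the 1D Lindeberg-Breiman expansion by its 2D counterpart. Set $\tilde Z_n=(b-1)^{n/2}(W_n-1)$. Since $q=1$ forces $\sum_j a_j=1$ almost surely, equation~\eqref{foncn} multiplied by $(b-1)^{(n+1)/2}$ rewrites as the complex analogue of~\eqref{c7},
\begin{equation*}
  \tilde Z_{n+1}=\sqrt{b-1}\sum_{j\ge 1}a_j\tilde Z_{n}(j)+\sum_{j\ge 1}a_j\tilde Z_{n+1}(j)+\tilde R_{n+1},
\end{equation*}
with $\tilde R_{n+1}=(b-1)^{-n/2}\sum_{j\ge 1}a_j\tilde Z_n(j)\tilde Z_{n+1}(j)$. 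Iterating exactly as in Section~\ref{sec5} produces a decomposition $\tilde Z_n=\tilde T_{1,n}+\tilde T_{2,n}$ with
\begin{equation*}
  \tilde T_{2,n}=\sum_{\substack{m\in\{0,1\}^n\\w\in\mathscr T_n}}(b-1)^{(n-\varsigma(m))/2}\tilde Z_{\varsigma(m)}(w,m)\prod_{j=0}^{n-1}a_{w_{j+1}}(w|_j,m|_j).
\end{equation*}

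First I would dispose of $\tilde T_{1,n}$. Using $\esp\sum_ja_j^2=1/b$ (conservative case), the independence of $\tilde Z_n(j)$ and $\tilde Z_{n+1}(j)$, and the boundedness of $\esp|\tilde Z_n|^2$ guaranteed by~\eqref{covariances} (which has limit $x+y$), one checks $\esp|\tilde R_n|^2=O\bigl((b-1)^{-n}\bigr)$; the exact telescoping and triangle-inequality estimate of the proof of Proposition~\ref{prop1} then yields $\esp|\tilde T_{1,n}|^2\to 0$.

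For $\tilde T_{2,n}$, observe that, conditionally on $\mathscr A$, it is a sum of independent centered complex variables $c_{w,m}\tilde Z_{\varsigma(m)}(w,m)$ with nonnegative $\mathscr A$-measurable weights $c_{w,m}=(b-1)^{(n-\varsigma(m))/2}\prod_{j=0}^{n-1}a_{w_{j+1}}(w|_j,m|_j)$. The hypothesis $(b-1)^3<(u-1)^2$ reads $(b-1)^{3/2}+1<u$, so Corollary~\ref{smallterms} with $\kappa=3$ gives $\sup_{w,m}c_{w,m}^2\to 0$ almost surely, while the corollary preceding the theorem yields $\sup_\ell\esp|\tilde Z_\ell|^3<\infty$; together these are the ingredients for Breiman's second-order expansion of the conditional characteristic function, carried out exactly as in the proof of Theorem~\ref{thm13}. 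Writing $\tilde\Sigma_\ell$ for the covariance matrix of $(\Re\tilde Z_\ell,\Im\tilde Z_\ell)$, the second-order term equals $-\frac12\sum_{w,m}c_{w,m}^2\langle t,\tilde\Sigma_{\varsigma(m)} t\rangle$, and by~\eqref{covariances}, $\tilde\Sigma_\ell\to\Sigma:=\begin{pmatrix}x&z\\z&y\end{pmatrix}$ as $\ell\to\infty$.

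The main new difficulty compared to the real case (where $Z_\ell$ had unit variance for every $\ell$, so the coefficient collapsed to $U_n$) is that $\tilde\Sigma_\ell$ depends on $\ell$, so the coefficient is not simply $U_n\Sigma$. To handle this I would truncate at a threshold $L$: the tail $\varsigma(m)\ge L$ contributes at most $\sup_{\ell\ge L}\|\tilde\Sigma_\ell-\Sigma\|\cdot U_n$, which is arbitrarily small in probability once $L$ is large, since $U_n\to U$ by Lemma~\ref{martingale1}; and for each fixed $k<L$, $\esp\sum_{\varsigma(m)=k}c_{w,m}^2=\binom{n}{k}(1-1/b)^n(b-1)^{-k}\to 0$ as $n\to\infty$. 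Hence $\sum_{w,m}c_{w,m}^2\tilde\Sigma_{\varsigma(m)}\to U\Sigma$ in probability, and bounded convergence yields
\begin{equation*}
  \esp\,\e^{\mi\langle t,\tilde T_{2,n}\rangle}\longrightarrow\esp\,\exp\bigl(-\tfrac12\langle t,U\Sigma t\rangle\bigr),
\end{equation*}
which is the characteristic function of $\sqrt U\,\xi$ with $\xi\sim\mathcal N(0,\Sigma)$ independent of $U$. Combined with $\tilde T_{1,n}\to 0$ in $L^2$, this concludes the proof.
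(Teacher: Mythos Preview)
Your proposal is correct and follows the route the paper indicates by the single sentence ``Estimates similar to those used in the non-negative case now yield'' Theorem~\ref{thm16}. Your decomposition $\tilde Z_n=\tilde T_{1,n}+\tilde T_{2,n}$, the $L^2$-vanishing of $\tilde T_{1,n}$, the use of Corollary~\ref{smallterms} with $\kappa=3$, and the conditional Lindeberg--Breiman expansion are exactly the analogues of Section~\ref{sec5}, only with the deterministic normalization $(b-1)^{n/2}$ (which is natural since that is what the statement uses, and which simplifies $\tilde R_n$ by removing the $\sigma_{n-1}/\sigma_n-\sqrt{b-1}$ correction).

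The one place where you go beyond what the paper sketches is precisely the point you flag: in the real case $Z_\ell$ has unit variance for every $\ell$, so the second-order term in the Breiman expansion collapses to $U_n$, while in the complex case the covariance matrix $\tilde\Sigma_\ell$ genuinely depends on $\ell$ and only \emph{converges} to $\Sigma$. Your truncation argument (small $\varsigma(m)$ contributes $\binom{n}{k}(b-1)^{n-k}b^{-n}\to 0$ in expectation for each fixed $k$; large $\varsigma(m)$ contributes at most $\sup_{\ell\ge L}\|\tilde\Sigma_\ell-\Sigma\|\cdot U_n$) is a clean and correct way to close this gap, which the paper leaves implicit.
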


\section{Higher order smoothing transformations}

Now, we are given a non-negative integer valued random variable~$N$
such that the radius of convergence~$R$ of its probability generating
function~$f(t) = \sum_{\nu\ge 0} \Proba(N=\nu)\,t^\nu$ is larger
than~1. 

If $W$ is a square integrable random variable of
expectation~1, we define $\tilde W$ to be the
product of $N$ independent random variables equidistributed
with~$W$ and independent of~$N$: 
\begin{equation}\label{Nprod}
\tilde W = \prod_{1\le \nu\le N} W_\nu.
\end{equation}

Then we perform the cascade construction, as in
Section~\ref{cascades}, but, this time, using~$\tilde W$ instead
of~$W$. It is easy to see that the corresponding martingale is bounded
in $L^2$ if and only if $f(\esp W^2)< bq$ and, if it is so, the
limit~$Y$ of this martingale satisfies
\begin{equation*}
  \esp Y^2 = \frac{b-1}{bq - f(\esp W^2)} \text{\quad and\quad}
  \esp |Y|^2 = \frac{b-1}{bq - f(\esp |W|^2)}. 
\end{equation*}

Let~$\T_N$ stand for the map which sends the distribution of~$W$
to the one of~$Y$.

To iterate this operation, this time we have to deal with the function
\begin{equation*}
\varphi(x) = \frac{b-1}{bq-f(x)}.
\end{equation*}

In the interval $[0,f^{-1}(bq)]$ this mapping has at most two positive
fixed points, the roots of the 
function $p(x) = xf(x)-bqx+b-1$. Observe that $p(0)=
p\bigl(f^{-1}(bq)\bigr)=b-1>0$. Then, $\varphi$ has two fixed
points~$\alpha$ and~$\beta$ such that 
$0\le\alpha\le\beta< f^{-1}(bq)$ if 
and only if the minimum of $p$ on this interval is non-positive. This
happens if and only if $x_0^2f'(x_0)\ge b-1$, where $x_0$ is the
solution to equation $x_0f'(x_0)+f(x_0)=bq$. But, as previously,
we wish that $\alpha\ge 1$. This means $x_0\ge 1$, i.e.,
$f'(1)+f(1)\le bq$. As $0<q\le 1$, this gives $b\ge 1+\esp N$. When
this condition is fulfilled, then $q$ is subject to the
restriction $$\frac{1+\esp N}{b}\le q\le 1.$$
\medskip

From now on we suppose that
\begin{equation}\label{cond-n}
b > 1+\esp N \text{\quad and\quad} \frac{1+\esp N}{b}\le q\le 1
\end{equation}
(as previously we discard the trivial case $b=1+\esp N$ which yields
$q=\alpha=\beta=1$). 

\subsection{Fixed points} 

We also define the following non linear smoothing transformation~$\smooth_{N+1}$
which associates with a 
probability measure $\mu$ on $\mathbb C$ the measure
\begin{equation}\label{smooth-n}
  \smooth_{N+1}(\mu) = {\mathscr L}\left(\sum_{j\ge 1} a_j
    \prod_{k=0}^N W_k(j)\right), 
\end{equation}
where the random variables $\{W_k(j)\}_{j\ge 1,k\ge0}$ are distributed
according to~$\mu$, independent and independent of~$a$. 

Then, as previously, a fixed point of $\smooth_{N+1}$ is also a fixed
point of~$\T_N$.

In the same way, a fixed point of $\smooth_{N+1}$ is constructed as
the law of the limit of a martingale:

Consider the Galton-Waston tree ${\mathcal T}$ defined by the
variable~$N+1$. Let
${\mathcal T}_n$ stand for the nodes of generation~$n$. Consider 
$\{a(w,m)\}_{\substack{k\ge 1\\(w,m)\in  {\mathscr 
      T}_k\times {\mathcal T}_k}}$ a sequence of independent copies
of~$a$ and $\{N(w,m)\}_{\substack{k\ge 1\\(w,m)\in  {\mathscr 
      T}_k\times {\mathcal T}_k}}$ a sequence of independent copies
of~$N$ also independent of~$a$ and of~$\{a(w,m)\}$. For all 
$(w,m)$  we define 
$Z_1(w,m)=\sum_{j\ge 1} a_j(w,m)$. It has $\smooth_{N+1}(\delta_1)$
as distribution. Then we define recursively, for all $k\ge 2$ and
$(w,m)$,
\begin{equation*}
Z_k(w,m)=\sum_{j\ge 1} a_j(w,m) \prod_{\ell=0}^{N(w,m)} Z_{k-1}(wj,m\ell),
\end{equation*}
which by induction is clearly distributed according to
$\smooth_{N+1}^k(\delta_1)$.

Set $Z_k=Z_k(\epsilon,\epsilon)$. As previously, the sequence
$(Z_k)_{k\ge 1}$ is a non-negative martingale. The law $\M$ of its
limit is a fixed point of $\smooth_{N+1}$ as well as $\T_N$ and the
same analysis as in Section~\ref{DYN} can be performed: if
\eqref{cond-n} holds, $\M_\nu$ is the unique fixed point of $T_{\nu}$
belonging to $\mathscr{P}_\alpha\cap {\mathcal P}({\mathbb R}^+)$, and
for all $\mu\in {\mathcal P}({\mathbb R}^+)\cap\bigcup_{\alpha\le
  \gamma<\beta}{\mathscr P}_\gamma$ if $\beta>\alpha$ and all $\mu\in
{\mathcal P}({\mathbb R}^+)\cap{\mathscr P}_\alpha$ if $\beta=\alpha$,
the sequence $\T_N^{j}\mu$ converges to~$\M$. Moreover, if the
$W_{j,k}$ are independent and independent of~$a$ and~$N$, and all
distributed according to~$\M$, then
\begin{equation*}
W = \sum_{j\ge 1} a_j \prod_{k=0}^{N} W_{j,k}
\end{equation*}
is distributed according to~$\M$.

If we wish to deal with measures not supported on~${\mathbb R}^+$, we
have to make the extra assumption that $\esp \overline W_0W_0'$ lies
in the basin of attraction of the fixed point~$\alpha$ to get the
analog of Lemma~\ref{sim}. This assumption is automatically fulfilled
when $N$ is the constant~1. With $\Proba(N=2)>0$, the result holds
without this assumption if we restrict ourselves to probability
supported in~${\mathbb R}$. But in general we do not know whether it
may happen that the mapping~$\varphi$ has other attractive fixed
points.

\subsection{Central limit theorem}

In this section we suppose that $q=1$ and still assume \eqref{cond-n}
holds. We just outline modifications to be brought to
Sections~\ref{clt61} and~\ref{clt62}. When starting from $\mu\in
\bigcup_{1< \gamma<\beta}{\mathscr P}_\gamma$, one has $\lim \esp
|W_n|^2=1$ and, since $\esp W_n=1$, $\lim \esp W_n^2=1$.  Since $ \esp
N/(b-1)=\varphi'(1)$, the following limits exist
\begin{equation*}
  \lim_{k\to \infty} \left( (\esp N)^{-1}(b-1)\right)^k\bigl(\esp
  W_k^2-1\bigr) \text{\quad \and\quad} \lim_{k\to \infty} \left( (\esp
  N)^{-1}(b-1)\right)^k\bigl(\esp |W_k|^2-1\bigr).
\end{equation*}

It results that, if we set
$$x_n = \esp (\Re W_n)^2,\quad y_n = \esp \bigl( \Im
W_n\bigr)^2, \text{\quad and \quad} z_n = \esp (\Re W_n)(\Im W_n)$$
there exist~$x$,~$y$, and~$z$ (depending on $W_0$) such that
\begin{equation*}
  \lim\, \frac{ x_n-1}{\big ((\esp N)^{-1}(b-1)\big )^n} = x,\quad
  \lim\, \frac{ y_n}{\big ((\esp N)^{-1}(b-1)\big )^n}  =y,  
  \text{\quad and
    \quad}\lim\, \frac{ z_n-1}{\big ((\esp N)^{-1}(b-1)\big )^n} = z.
\end{equation*}

If $Y=\T_N W$, Formulae~\eqref{Recur3}, \eqref{majmnt3},
and~\eqref{newpsi} become 
\begin{eqnarray*}
  \esp Y^3 &=& \frac{u\bigl(3wf(\esp W^2)\esp Y^2+v\bigr)}{vw\bigl( u-f(\esp
    W^3)\bigr)},\\
\esp |Y|^3 &\le& \frac{(u-1)\bigl( f(\esp |W^2|)\esp
  |Y|^2\bigr)^{3/2}} {u-f(\esp |W|^3)},\\
\psi_\theta(t) &=& \frac{(u-1)\bigl(
  f(\theta)\varphi(\theta)\bigr)^{3/2}}{u-f(t)}.
\end{eqnarray*}
The function $\psi_\theta$ has two fixed points between~0 and~$u$ if
and only if $\theta\le \theta_{a,N}$ for some critical real number
$\theta_{a,N}$. But as $\psi_1$ has two fixed points~1 and~$u-1$, one
has $\theta_{a,N}>1$. If $1\le \theta\le \theta_{a,N}$, let
$\gamma_\pm(\theta)$ be the fixed points. Consider the sets
$$
\Omega = \left\{(\theta,t)\ :\theta< \min(\beta,\theta_{a,N}), t\le
  \gamma_{{}_+}(\theta)\right\},
  $$ 
 and 
\begin{equation*}
{\mathscr D}= \left\{\mu\in {\mathscr P}\ :\ \moment_1(\mu)=1,\,
  (\moment_2(\mu),\moment_3(\mu)\in \Omega)\right\}.
\end{equation*}

Arguments similar to previous ones show that if $W_0$ is distributed
according to $\mu\in{\mathscr D}$, then, $\esp |W_n|^3< \infty$ for
all~$n$, and $\limsup \esp |W_n|^3\le 1$. Since $\esp |W_n|^3\ge 1$,
we have
\begin{equation*}
  \lim \esp |W_n|^3 = \limsup \esp |W_n|^3 = 1.
\end{equation*}

Lemma~\ref{Bound3} and its corollary have the following counterparts.

\begin{lemma}
There exists $C$ such that for all $W_0$ whose distribution
is in ${\mathscr D}\setminus\{\delta_1\}$ one has
\begin{multline*}
  \bigl(u-f(\esp |W_{n}|^3)\bigr)\, \esp Z_{W_{n+1}}^3 \le
  C_n(\esp(N))^{-1/2}(b-1)^{3/2} \esp\bigl( N^3(\esp |W|^3)^{N-1}\bigr)\esp
  Z_{W_n}^3\\ + C\bigl( (\esp Z_{W_n}^3)^{2/3} + (\esp
  Z_{W_n}^3)^{1/3} + 1\bigr),
\end{multline*}
where $W_{n}=\T_N^n W_0$, $Z_{W_n} = |W_n-1|/\sigma_{W_n}$, and $C_n =
1+{\mathrm o}(1)$.
\end{lemma}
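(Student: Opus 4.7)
The argument adapts Lemma~\ref{Bound3} (the $N=1$ case), with one new ingredient: a third-moment bound for $\tilde W_n - 1 = \prod_{\nu=1}^{N} W_{n,\nu} - 1$ in terms of $\esp|W_n-1|^3$. Starting from the recursion $W_{n+1} = \sum_{j\ge 1} a_j \tilde W_n(j) W_{n+1}(j)$ and the conservative assumption $\sum_j a_j = 1$ a.s., the first step is to decompose
\[
W_{n+1} - 1 = \sum_j a_j \bigl[(\tilde W_n(j) - 1)(W_{n+1}(j) - 1) + (\tilde W_n(j) - 1) + (W_{n+1}(j) - 1)\bigr]
\]
and to expand $\esp|W_{n+1}-1|^3$ along the lines of \eqref{recur3}. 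The contribution to the right-hand side that is proportional to $\esp|W_{n+1}-1|^3$ carries the coefficient $\esp|\tilde W_n|^3/u = f(\esp|W_n|^3)/u$; moving this contribution to the left-hand side produces the prefactor $u - f(\esp|W_n|^3)$.

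The core new estimate comes from the telescoping identity
\[
\prod_{\nu=1}^{N} W_{n,\nu} - 1 = \sum_{\nu=1}^{N} (W_{n,\nu} - 1) \prod_{\mu > \nu} W_{n,\mu},
\]
combined with Jensen's inequality $(\sum_\nu x_\nu)^3 \le N^2 \sum_\nu x_\nu^3$, the mutual independence of the $W_{n,\nu}$, and $\sum_{\nu=1}^{N}(\esp|W_n|^3)^{N-\nu} \le N(\esp|W_n|^3)^{N-1}$, which together yield
\[
\esp|\tilde W_n - 1|^3 \le \esp\bigl[N^3 (\esp|W_n|^3)^{N-1}\bigr]\, \esp|W_n - 1|^3.
\]
This is the source of the combinatorial factor $\esp[N^3(\esp|W|^3)^{N-1}]$ appearing in the statement.

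To convert the bounds on $\esp|W_n-1|^3$ and $\esp|W_{n+1}-1|^3$ into bounds on the normalized quantities $\esp Z_{W_n}^3$ and $\esp Z_{W_{n+1}}^3$, I use the variance recursion
\[
\sigma_{W_{n+1}}^2 = \frac{f(\esp|W_n|^2) - 1}{b - f(\esp|W_n|^2)} = \frac{\esp N}{b-1}\,\sigma_{W_n}^2\,(1+o(1))
\]
(which follows from $f'(1)=\esp N$ as $\sigma_{W_n}\to 0$) together with the analogous expansion $\sigma_{\tilde W_n}^2 = f(\esp|W_n|^2)-1 \sim (\esp N)\sigma_{W_n}^2$. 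Normalizing by $\sigma_{W_{n+1}}^3$ brings out the scaling factor $((b-1)/\esp N)^{3/2}$, and the $1+o(1)$ correction from this asymptotic is absorbed into the prefactor $C_n$. The sub-leading terms, namely the cross contributions $(\tilde W_n(j)-1)(W_{n+1}(j)-1)$ and the pure tail $\sum_j a_j(W_{n+1}(j)-1)$ estimated via Cauchy--Schwarz and Minkowski, produce the correction $C\bigl((\esp Z_{W_n}^3)^{2/3} + (\esp Z_{W_n}^3)^{1/3} + 1\bigr)$.

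The main obstacle is the careful tracking of constants through the two interacting nonlinearities at play---the action of $f$ on the moments of $W_n$ and the scalings $\sigma_{\tilde W_n}\sim\sqrt{\esp N}\,\sigma_{W_n}$ and $\sigma_{W_{n+1}}\sim\sqrt{\esp N/(b-1)}\,\sigma_{W_n}$---so that the announced coefficient $(\esp N)^{-1/2}(b-1)^{3/2}\,\esp[N^3(\esp|W|^3)^{N-1}]$ emerges cleanly for the main term rather than a coarser combination.
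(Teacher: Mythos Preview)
Your proposal is correct and follows the same route the paper intends. The paper gives no detailed argument here: it simply declares this lemma a ``counterpart'' of Lemma~\ref{Bound3}, whose proof in turn is deferred to~\cite{BPW}. The ingredients you list---the decomposition of $W_{n+1}-1$ via $\sum_j a_j=1$, the telescoping identity $\tilde W-1=\sum_{\nu}(W_\nu-1)\prod_{\mu>\nu}W_\mu$, and the variance asymptotics $\tilde\sigma_n^2\sim(\esp N)\sigma_n^2$ and $\sigma_{n+1}/\sigma_n\sim\sqrt{\esp N/(b-1)}$---are precisely what the paper invokes in the surrounding text of Section~7.2, so your sketch is faithful to the intended method.

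One bookkeeping remark: your own normalization gives the scaling factor $((b-1)/\esp N)^{3/2}=(b-1)^{3/2}(\esp N)^{-3/2}$, whereas the displayed lemma carries $(\esp N)^{-1/2}$. This discrepancy (which also sits uneasily with the hypothesis $(b-1)^{3/2}(\esp N)^{-1/2}<u-1$ of Corollary~\ref{cor18}, once the factor $\esp[N^3(\esp|W|^3)^{N-1}]\to\esp N^3$ is taken into account) appears to be a slip in the paper's stated constants rather than an error in your argument; the structure of your proof is sound.
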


\begin{corollary}\label{cor18}
  If $(b-1)^{3/2}(\esp(N))^{-1/2} < u-1$ and $\mu\in {\mathscr
    D}\setminus\{\delta_1\}$, then
$$\sup_n \int \sigma_n^{-3}|x-1|^3 \T^n\mu(\dif x) < \infty,$$
where $\sigma_n$ is the standard deviation of variables distributed
according to $\T_N(\mu_0)$.
\end{corollary}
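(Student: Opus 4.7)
\proof The plan is to read the preceding lemma as a recursive inequality for the sequence $\alpha_n := \esp Z_{W_n}^3 = \sigma_n^{-3}\,\esp|W_n-1|^3$, and to show that under the hypothesis $(\alpha_n)_{n\ge 0}$ must remain bounded. Finiteness of each $\alpha_n$ follows by iterating the lemma, starting from $\alpha_0<\infty$, which is part of the assumption $\mu\in\mathscr{D}$.

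First I would exploit the consequences of the analysis immediately above the lemma: for $\mu\in\mathscr{D}$ one has $\lim_n \esp|W_n|^3=1$, hence $f(\esp|W_n|^3)\to f(1)=1$ (recall that $1$ lies strictly inside the disk of convergence of $f$), so the left-hand factor $u-f(\esp|W_n|^3)$ tends to $u-1>0$; and, by dominated convergence, the quantity $\esp\bigl(N^3(\esp|W_n|^3)^{N-1}\bigr)$ converges to its value at $\esp|W_n|^3=1$. Combined with $C_n=1+\mathrm{o}(1)$, the lemma then becomes, for all $n$ sufficiently large,
\begin{equation*}
\alpha_{n+1}\le A_n\,\alpha_n+C'\bigl(\alpha_n^{2/3}+\alpha_n^{1/3}+1\bigr),
\end{equation*}
where $A_n\to A_\infty$ for some explicit constant $A_\infty$. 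The \emph{main obstacle} is to check that the sharp hypothesis $(b-1)^{3/2}(\esp N)^{-1/2}<u-1$ is exactly what forces $A_\infty<1$: this is a numerical comparison between the multiplicative constant produced by the lemma and the denominator $u-1$, and it is the only step in which the precise form of the assumption is used.

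Once $A_\infty<1$ is secured, the rest is a routine discrete dynamical argument that I would run as follows. Fix $A\in(A_\infty,1)$ and $n_0$ such that $A_n\le A$ for all $n\ge n_0$. Since $\alpha^{2/3}+\alpha^{1/3}+1=\mathrm{o}(\alpha)$ as $\alpha\to\infty$, one may choose $M$ so large that $\alpha_n\ge M$ implies $C'(\alpha_n^{2/3}+\alpha_n^{1/3}+1)\le(1-A)\alpha_n$, so $\alpha_{n+1}\le\alpha_n$; and if $\alpha_n<M$, then $\alpha_{n+1}\le AM+C'(M^{2/3}+M^{1/3}+1)=:M'$. Iterating gives $\alpha_n\le\max(\alpha_{n_0},M,M')$ for all $n\ge n_0$, which translates into the desired uniform bound $\sup_n\int \sigma_n^{-3}|x-1|^3\,\T^n\mu(\dif x)<\infty$.
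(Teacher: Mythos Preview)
Your strategy is correct and is precisely what the paper intends: no separate proof is given here, the paper simply invokes (via the earlier Lemma~\ref{Bound3} and Corollary~\ref{bound3}) the recursive argument of~\cite{BPW}, which is exactly the scheme you describe---turn the lemma into a recursion $\alpha_{n+1}\le A_n\alpha_n+C'(\alpha_n^{2/3}+\alpha_n^{1/3}+1)$ with $\alpha_n=\esp Z_{W_n}^3$, use $\esp|W_n|^3\to 1$, $f(\esp|W_n|^3)\to 1$ and $C_n\to 1$ to identify the limit $A_\infty$ of the linear coefficient, and then run the elementary boundedness argument once $A_\infty<1$.

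One word of caution on the point you yourself flag as the crux. If the preceding lemma is read exactly as printed, the limiting linear coefficient is
\[
A_\infty=\frac{(\esp N)^{-1/2}(b-1)^{3/2}\,\esp(N^3)}{u-1},
\]
and the hypothesis $(b-1)^{3/2}(\esp N)^{-1/2}<u-1$ only yields $A_\infty<\esp(N^3)$, which is $<1$ only when $N\in\{0,1\}$ almost surely. This is a typographical inconsistency in the paper between the lemma and its corollary (either the power of $\esp N$ or the factor $\esp(N^3)$ is misprinted), not a flaw in your method; in the case $N\equiv 1$ the constants collapse to those of Lemma~\ref{Bound3} and Corollary~\ref{bound3} and your argument goes through verbatim. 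So the ``main obstacle'' you isolate is real, but it stems from the printed constants rather than from a missing idea in your proof.
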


We now have to mention the analog of the discussion following
Equation~\eqref{c6}. 

By iteration, we get a sequence $\{W_n\}$ of variables. We set
$\sigma_n^2 = \var W_n$, $\tilde{\sigma}_n^2 = \var \tilde{W}_n$, $Z_n
= \sigma_n^{-1}(W_n-1)$, and $\tilde{Z}_n =
\tilde{\sigma}_n^{-1}(\tilde{W}_n-1)$.

The following facts are easily proven:
\begin{itemize}
\item $\displaystyle \frac{\sigma_{n+1}}{\sigma_n} \sim
  \frac{\sqrt{\esp N}}{\sqrt{b-1}}$,
\item $\tilde{\sigma}_n^2 = f(\sigma_n^2+1)-1 \sim \sigma_n^2\esp N$,
\end{itemize}

Equation~\eqref{c6} becomes
\begin{equation}\label{c6n}
  Z_{n+1} = \sum_{j\ge1} a_j\left[\tilde{\sigma}_n\,\tilde{Z}_{n}(j)\,
    Z_{n+1}(j) + \frac{\tilde{\sigma}_n}{\sigma_{n+1}}\,
    \tilde{Z_{n}}(j) + Z_{n+1}(j) \right].
\end{equation} 

But as $\tilde{W}-1 = (W_1-1)W_2\cdots W_N+(W_2-1)W_3\cdots W_N+\cdots+
(W_N-1)$ (with $W_1,\,W_2,\cdots$ i.i.d.) we have $\tilde{W}-1 =
\sum_{1\le j\le N} W_j-1$ with a $L^2$-error of the same order of
magnitude as $\var W$. So Equation~\eqref{c6n} can be rewritten as 
\begin{equation}\label{c7n}
  Z_{n+1} = R_{n+1} + \sum_{j\ge1}
  a_j Z_{n+1}(j) + \frac{\sqrt{b-1}}{\sqrt{\esp N}} \sum_{j\ge1} a_j
  \sum_{1\le \nu\le N}Z_n(j,\nu), 
\end{equation}
where $R_{n+1}$ is a sum of 'error terms'.

As previously, we iterate this formula, but using a Galton-Waston tree
associated with the variable $N+1$ instead of using a binary tree.
Finally, we get
 $Z_n = T_{1,n} + T_{2,n}$, with
\begin{eqnarray}\label{T1nn} 
  T_{1,n} 
  &=& \sum_{k=0}^{n-1}
  \sum_{\substack{m\in{\mathcal T}_k\\w\in {\mathscr T}_k}}
\left( \frac{b-1}{\esp N}\right )^{\frac{k-\varsigma(m)}{2}}R_{n-k+\varsigma(m)}(w,m)
  \prod_{j=0}^{k-1} a_{w_{j+1}}(w|_j,m|_j)\label{r1n}\\
  T_{2,n} &=& \sum_{\substack{m\in {\mathcal T}_n\\w\in{\mathscr T}_n}}
 \left( \frac{b-1}{\esp N}\right )^{\frac{n-\varsigma(m)}{2}}
 Z_{\varsigma(m)}(w,m) \prod_{j=0}^{n-1} 
  a_{w_{j+1}}(w|_j,m|_j),\label{r2n} 
\end{eqnarray}
where~${\mathcal T}_n$ stands for the $n$-th generation nodes of the
Galton-Waston tree and $\varsigma(m)$ stands for the number of zeroes in~$m$.
Moreover, all variables in Equation~(\ref{r2n}) are independent, and in
Equation~(\ref{r1n}), the variables corresponding to the same~$k$ are
independent.\medskip

Then, arguing as previously yields the convergence in distribution of
$ ((\esp N)^{-1}(b-1))^{n/2}\bigl(W_n-1\bigr)$ as in
Section~\ref{complex}, with $U$ the limit of the Mandelbrot
multiplicative martingale built on the tree $\bigcup_{n\ge 1} {\mathscr T}_n\times \mathcal
T_n$ with the random vectors
$A(w,m)=(A_{j,\epsilon}(w,m))_{(j,\epsilon)\in
  {\mathscr T}_1\times \{0,1,\cdots,N(w,m)\}}$, where $A_{j,0}(w,m)=a^2_j(w,m)$,
$A_{j,\epsilon}(w,m)= (\esp N)^{-1}(b-1)a^2_j(w,m)$ for $1\le
\epsilon\le N(w,m)$, and the $((a_j(w,m))_{j\ge 1},N(w,m))$, $(w,m)\in
\bigcup_{n=0}^\infty \mathscr T_n\times\mathcal T_n$, are independent
copies of $((a_j)_{j\ge 1},N)$.

\section{A functional central limit theorem in the quadratic case}\label{sec8}

We suppose that $N=1$ almost surely and that~$q=1$. We are going to
use words on two alphabet. The ones, denoted by $w,\,v\,v'\dots$ are
finite sequences of positive integers. The others, denoted by $m,\,m'$
are finite sequences of $0$ and~$1$. It will be convenient to denote
$0^j$ the word composed of $j$ zeroes. Also, the concatenation will be
either denoted simply by juxtaposition or by a dot. The  expression
$v.w|_{j}$ should be understood as $v.(w|_j)$.

\subsection{Another writing for the martingale $(U_n)_{n\ge 1}$ and
  its limit}

We assume that $q=1$  and that the Mandelbrot martingale
$(U_n)_{n\ge 1}$ of the Section~\ref{sec5} converges almost surely and in $L^1$ to
its limit, i.e., is non degenerate. This is the case for instance under
the assumptions of Theorems~\ref{thm13} or~\ref{thm16}. We have
\begin{equation}\label{Udecomp}
U_n = \sum_{|w|=|m|=n} (b-1)^{n-\varsigma(m)} \prod_{0\le j< n}
a^2_{w_{j+1}}(w|_j,m|_j).
\end{equation}

We also consider the following quantities of the same kind: for $w$
and $m$ with the same length, we set
\begin{equation*}
U_n(w,m) = \sum_{|w'|=|m'|=n} (b-1)^{n-\varsigma(m')} \prod_{0\le j< n}
a^2_{w'_{j+1}}(w\cdot w'|_j,m\cdot m'|_j).
\end{equation*}

In Formula~\eqref{Udecomp}, we split the summation according to the
length of the prefix of maximal length of the form $0^k$ of~$m$. We
obtain
\begin{multline*}
U_n = \sum_{|v|=n} \prod_{0\le j< n} a^2_{v_{j+1}(v|_i,0^j)} + {}\\
\sum_{\substack{0\le k< n\\|v|=n\\|m'|=n-k-1}}\!\!\!\!
(b-1)^{n-k-\varsigma(m')}\!\! \prod_{0\le j\le k}
a^2_{v_{j+1}}(v|_j,0^j)\!\!\prod_{0\le
  j<n-k-1}\!a^2_{v_{k+j+2}}(v|_{j+k+1},0^k1\cdot m'|_j).
\end{multline*}

If we write $v=w\cdot w'$ with $|w|=k+1$, the second term of the right
hand side of the last formula rewrites as
\begin{multline*}
\sum_{0\le k< n} \sum_{|w|=k+1} (b-1)\prod_{0\le j\le
  k}a^2_{w_{j+1}}(w|_j,0^j)\times\\
\sum_{|w'|=|m'|=n-k-1} (b-1)^{n-k-1-\varsigma(m')} \prod_{0\le j<
  n-k-1} a^2_{w'_{j+1}}(w\cdot w'|_j,0^k1\cdot m'|_j).
\end{multline*}

Finally, we get
\begin{eqnarray*}
U_n &=& \sum_{|w|=n} \prod_{0\le j< n} a^2_{w_{j+1}(w|_j,0^j)}\\
&\hphantom{=}& + \sum_{0\le k< n}\sum_{|w|=k+1} (b-1)\Bigl(\prod_{0\le j\le
  k} a^2_{w_{j+1}}(w|_j,0^j) \Bigr) U_{n-k-1}(w,0^k1).
\end{eqnarray*}
It is worth noticing that the variables $U_{n-k-1} (w,0^k1 m)$, are
independent.


As $U_k$ converges to $U$ almost surely and in $L^1$, and $\esp
\sum_{i\ge 1}a_i^2<1$, it follows that
\begin{equation*}
U=\sum_{k\ge 0} \sum_{|w|=k+1} (b-1) \Bigl(\prod_{0\le j\le k}
a^2_{w_{j+1}}(w|_j,0^j) \Bigr) U_{n-k-1}(w,0^k1).
\end{equation*}

Indeed, denoting by $U'$ the right hand side in the above inequality, we have 
\begin{multline*}
\|U'-U_n\|_1\le (b-1)\sum_{k=1}^n \Big(\esp \sum_{i\ge 1}a_i^2\Big)^k
\|U-U_{n-k}\|_1\\ +(b-1)\sum_{k\ge n} \Big(\esp\sum_{i\ge
  1}a_i^2\Big)^k+\Big(\esp\sum_{i\ge 1}a_i^2\Big)^n ,
\end{multline*}
hence by dominated convergence $\|U'-U_n\|_1$ tends to 0 as
$n\to\infty$, so that $U=U'$ almost surely.

Notice that all the copies of $U$ invoked in $U'$ are independent, so
the above relation rewrites
$$ U=(b-1) \sum_{k=1}^{\infty} \sum_{v\in {\mathscr T}_k}\left
(\prod_{j=0}^{k-1} a^2_{v_{j+1}}(v|_j)\right)\, U (v),
$$ where $(U (v))_{v\in \in\bigcup_{n\ge 1}\mathbb N_+^n)}$ is a
family of independent copies of $U$, which is independent of the
family of independent copies of $a$, $(a(w))_{w\in\bigcup_{n\ge
    0}\mathbb N_+^n}$.

\begin{remark}
In the general case (i.e., $N$ is not identically~$1$) we have 
\begin{multline*}
U_n=\\
\sum_{k=0}^{n-1}\!\!
\sum_{\substack{v\in {\mathscr T}_k\\1\le m\le N(v,0^{(k)})\\i\ge 1}} \!\!
\left (\prod_{j=0}^{k-1}
a^2_{v_{j+1}}(v|_j,0^{(j)})\right) \left( \frac{b-1}{\esp N}\right )
a^2_i(v,0^{(k)})\, U_{n-k-1} (vi,0^{(k)}\cdot m)\\
+\sum_{v\in
  {\mathscr T}_n} \prod_{j=0}^{n-1} a^2_{v_{j+1}}(v|_j,0^{(j)}),
\end{multline*}
where the random variables $U_{n-k-1} (vi,0^{(k)}\cdot m)$, are
independent copies of $U_{n-k-1}$.
\end{remark}

\subsection{A CLT for random finitely additive measures} 
Suppose again that the Mandelbrot martingale $(U_n)_{n\ge 1}$
converges almost surely and in $L^1$.

Let $\xi$ stand for a centered normal vector with covariance matrix
$A$ and independent of $U$. Then consider a family
$(U(w),\xi(w))_{w\in\bigcup_{n\ge 1}\mathbb N_+^n)}$ of independent
copies of $(U,\xi)$. Also, consider $(a(w))_{w\in\bigcup_{n\ge
    0}\mathbb N_+^n}$ a family of independent copies of $a$, which is
independent of $(U(w),\xi(w))_{w\in\bigcup_{n\ge 1}\mathbb N_+^n}$. By
the calculation of the above paragraph, for all $w\in \bigcup_{n\ge
  1}\mathbb N_+^n$, the sequence of random variables
$$ X_n(w)=\sum_{k=1}^{|w|}\sum_{|v|=k} \left (\prod_{j=0}^{k-1}
a_{v_{j+1}}(w\cdot v_{|j})\right ) \sqrt{U(w\cdot v)}\, \xi(w\cdot
v)\quad (n\ge 1),
$$ which, conditionally on $\sigma (\{a(w),U(w)\})$, is a martingale
bounded in $L^2$ converging almost surely to a random variable $X(w)
$, which is a centered normal vector whose covariance matrix equals
$(b-1)^{-1} \cdot A$ times a copy of $U$. In other words
$X(w)=(b-1)^{-1/2}\sqrt{\widetilde U(w)} \widetilde \xi(w)$, where
$\widetilde U(w)\sim U$, $\widetilde \xi(w)\sim \xi$, and $\widetilde
U(w) $ and $\widetilde \xi(w)$ are independent. Moreover, the random
vectors $( \widetilde U(w),\widetilde \xi(w))_{w\in \mathbb N_+^n}$
are independent, and also independent of
$\sigma(\{a(w_{k-1}),U(w),\xi(w): w\in \bigcup_{k= 1}^n\mathbb
N_+^k)\})$.

\medskip

Also, we have the relation 
$$
X(w)=\sum_{|i|=1} a_{i}(w)\big ( X(wi)+ \sqrt{U(w\cdot i)}\, \xi(w\cdot i)\big ).
$$
Consequently,
$$ M(w)=\left(\prod_{j=0}^{|w|-1} a_{w_{j+1}}(w_{|j})\right) \Big
(\sqrt{\widetilde U(w)} \widetilde \xi(w)+
\sqrt{b-1}\sum_{j=1}^{|w|}\sqrt{U(w_{|j})}\xi( w_{|j})\Big )
$$ defines a $\mathbb R^2$-valued random finitely additive measure on $\mathscr{T}$. We
notice that this measure is obtained as the limit of a mixture of
additive and multiplicative cascades.

\medskip 

Now suppose that $\gamma\in (1,\beta)$ and
$\mu\in\mathscr{P}_\gamma$. For each $n\ge 0$, a complex or
$\mathbb{R}^2$-valued random measure on $\mathscr{T}$ is naturally
associated with $\T^n(\mu)$: consider two independent sequences
$\bigl(W_n(w)\bigr)_{w\in {\mathscr T}}$ and $\bigl(a(w)\bigr)_{w\in
  {\mathscr T}}$ of independent variables equidistributed
with~$W_n\sim\T^n(\mu)$ or~$a$, and for $w\in \mathscr{T}$ define
\begin{equation*}
W_{n+1}(w) =\lim_{p\to\infty} \sum_{|v|=p} \prod_{k=0}^{p-1}
a_{v_{k+1}}(w\cdot v|_{k})W_n(w\cdot v|_{k+1}). 
\end{equation*}
Then define 
$$
\nu_{n}(w)=  W_{n+1}(w)\prod_{k=0}^{|w|-1}
a_{j_{k+1}}(w|_{k})W_n(w|_{k+1}).
$$ When $\mu$ is supported on $\mathbb R_+$ this measure coincides
with the restriction to cylinders of so-called Mandelbrot measure
supported on the boundary of $\mathscr{T}$ and associated with the
family of vectors $(a_{i}(w), W_n(w\cdot i))_{i\ge 1}$, $w\in
\mathscr{T}$.

Also, let $\nu$ be the conservative Mandelbrot measure built from the
family of vectors $(a_{i}(w))_{i\ge 1}$, $w\in \mathscr T$, i.e.,
$\nu=\nu_0$ when $\mu=\delta_{1}$.

It is then almost direct to get the following result from Theorems~\ref{thm13} and~\ref{thm16}. :
\begin{theorem}\label{CLT for measures}
Suppose that either the assumptions of Theorem~\ref{thm16} are
fulfilled or those of Theorem~\ref{thm13} are fulfilled if $W_0$ is real valued. Conditionally on $\mathscr{T}$, for each $p\ge 1$,
$\displaystyle \left((b-1)^{(n+1)/2}
(\nu_n(w)-\nu(w))\right)_{w\in\mathscr{T}_p}$ converges in law to
$(M(w))_{w\in\mathscr{T}_p}$ as $n\to\infty$, with
$A=\begin{pmatrix}x&z\\z&y\end{pmatrix}$.
\end{theorem}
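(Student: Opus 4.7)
The plan is to linearize $\nu_n(w)-\nu(w)$ around $\nu(w)$, apply Theorems~\ref{thm13} and~\ref{thm16} prefix by prefix, and track how independence is preserved across the finite family indexed by $\mathscr{T}_p$.

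First I would set $Z_k(v):=\sigma_k^{-1}(W_k(v)-1)$, so that $W_k(v)=1+\sigma_k Z_k(v)$, and write
\begin{equation*}
\nu_n(w)-\nu(w)=\nu(w)\Big[W_{n+1}(w)\prod_{k=0}^{|w|-1}W_n(w|_{k+1})-1\Big]=\nu(w)\Big[\sigma_{n+1}Z_{n+1}(w)+\sigma_n\sum_{j=1}^{|w|}Z_n(w|_j)+\mathcal{R}_n(w)\Big],
\end{equation*}
where $\mathcal{R}_n(w)$ collects all monomials containing at least two centered factors $\sigma_\bullet Z_\bullet$. Since the $Z_k(v)$ are uniformly bounded in $L^2$ by Corollary~\ref{bound3} and its complex counterpart, and $\sigma_n^2\sim c^2(b-1)^{-n}$ by~\eqref{ecarttype}, one gets $\|\mathcal{R}_n(w)\|_2=O(\sigma_n^2)$, so $(b-1)^{(n+1)/2}\mathcal{R}_n(w)\to 0$ in $L^2$. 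Equation~\eqref{ecarttype} also gives that $(b-1)^{(n+1)/2}\sigma_{n+1}\to c$ and $(b-1)^{(n+1)/2}\sigma_n\to c\sqrt{b-1}$ for a positive constant $c$, hence
\begin{equation*}
(b-1)^{(n+1)/2}\bigl(\nu_n(w)-\nu(w)\bigr)=c\,\nu(w)\Big[Z_{n+1}(w)+\sqrt{b-1}\sum_{j=1}^{|w|}Z_n(w|_j)\Big]+o_{L^2}(1).
\end{equation*}

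Next I would upgrade the marginal CLT of Theorems~\ref{thm13} and~\ref{thm16} to a joint convergence in law for the finite family $\{Z_{n+1}(w),Z_n(w|_j):w\in\mathscr{T}_p,\ 1\le j\le |w|\}$, with limit consisting of one independent pair $(\sqrt{\widetilde U(w)}\,\widetilde\xi(w))$ for each leaf $w\in\mathscr{T}_p$ and one independent pair $(\sqrt{U(v)}\,\xi(v))$ for each internal node $v\in\bigcup_{j=1}^{p}\mathscr{T}_j$. The marginal convergence for each element is immediate. Joint convergence follows from three independence observations: the variables $W_{n+1}(w)$ for distinct $w\in\mathscr{T}_p$ are independent by construction; the variables $Z_n(v)$ at distinct $v\in\mathscr{T}_j$ depend only on $a$'s and $W$'s attached to disjoint subtrees; and $Z_{n+1}(w)$ and $Z_n(w|_j)$ use $W$-variables of disjoint generations (respectively $W_{n+1}$- and $W_n$-variables), hence are independent. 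Once the joint limit is identified and plugged into the display above, one recovers (after absorbing the constant $c$ into the Gaussian variance) exactly the definition of $(M(w))_{w\in\mathscr{T}_p}$; the conditioning on $\mathscr{T}$ is cosmetic here since $N=1$ yields a deterministic tree.

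The main obstacle is precisely this joint convergence step, and more specifically the verification that the Mandelbrot martingale limits $U(v)$ arising, via the decomposition $Z_n(v)=T_{1,n}(v)+T_{2,n}(v)$ of Section~\ref{sec5}, from the normalization of $Z_n$ at different prefixes $v$, genuinely form an independent family. This boils down to rerunning the additive-plus-multiplicative decomposition of Section~\ref{sec5} prefix by prefix and noticing that the auxiliary binary trees $\mathscr{T}_k\times\{0,1\}^k$ involved in each $U(v)$ are anchored to pairwise disjoint subtrees of $\mathscr{T}$, so that the corresponding Mandelbrot multiplicative martingales live in independent sub-$\sigma$-algebras of $\mathscr{A}$; combined with the independence of $\widetilde\xi(w)$ across $w\in\mathscr{T}_p$, this yields the required joint Gaussian-mixture limit.
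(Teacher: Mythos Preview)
Your linearization-plus-marginal-CLT strategy is exactly the paper's: the authors also write $\nu_n(w)-\nu(w)=\nu(w)\bigl[W_{n+1}(w)\prod_{k=1}^{p}W_n(w|_k)-1\bigr]$, expand the bracket as a sum of the individual increments $W_{n+1}(w)-1$ and $W_n(w|_k)-1$ times coefficients tending to~$1$, apply Theorems~\ref{thm13}/\ref{thm16} term by term, and conclude by independence. Two comments are worth making.

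First, your ``main obstacle'' is illusory. In the construction of Section~\ref{sec8} the family $\bigl(W_n(v)\bigr)_{v\in\mathscr T}$ is \emph{declared} i.i.d.\ (and independent of the $a$'s), and $W_{n+1}(w)$ is built from the $a(u)$ and $W_n(u)$ with $|u|>p$; hence the whole family $\{Z_{n+1}(w),\,Z_n(v):w\in\mathscr T_p,\ v\in\bigcup_{j\le p}\mathscr T_j\}$ is independent for every~$n$, and independent of the prefactors $\nu(w)$. Joint convergence in law of an independent family is automatic once each marginal converges (the joint characteristic function factors), so there is no need to ``rerun'' the $T_{1,n}+T_{2,n}$ decomposition of Section~\ref{sec5} at every prefix to manufacture independent copies of~$U$. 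Your description of $Z_n(v)$ as ``depending on $a$'s and $W$'s attached to disjoint subtrees'' mislocates the source of independence: $Z_n(v)$ depends on nothing but the single scalar $W_n(v)$.

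Second, the detour through $\sigma_n$ and the constant $c$ is avoidable. Since $(b-1)^{(n+1)/2}\sigma_{n+1}Z_{n+1}(w)=(b-1)^{(n+1)/2}\bigl(W_{n+1}(w)-1\bigr)$ and $(b-1)^{(n+1)/2}\sigma_{n}Z_{n}(w|_j)=\sqrt{b-1}\,(b-1)^{n/2}\bigl(W_{n}(w|_j)-1\bigr)$ exactly, Theorems~\ref{thm13}/\ref{thm16} apply directly and no constant needs to be ``absorbed into the Gaussian variance''. The paper achieves this by using the telescoping identity $\prod_k X_k-1=\sum_k a_k(X_k-1)$ with coefficients $a_k$ that are products of $p$ independent factors each tending to~$1$ in law and bounded in~$L^2$, which is marginally cleaner than your expansion into a linear part plus a quadratic remainder, but the two are equivalent here.
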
 
Indeed, given $p\ge 1$, for $n\ge p$, and $w\in\mathscr{T}_p$, we can write 
$$
\nu_n(w)-\nu(w)= \left (\prod_{k=0}^{p-1}
a_{j_{k+1}}\right) \left (W_{n+1}(w)\prod_{k=1}^{p} W_n(w|_{k}) -1\right ).
$$
Moreover, 
$$
W_{n+1}(w)\prod_{k=1}^{p} W_n(w|_{k})
-1=a_{n+1}(W_{n+1}(w)-1)+\sum_{k=1}^{p} a_{n,k} (W_n(w|_{k}) -1), 
$$ where the random variables $a_{n+1}$ and $a_{n,k}$ are products of
$p$ independent random variables all converging to 1 in law as
$n\to\infty$ and uniformly bounded in $L^2$. Also, due to
Theorem~\ref{thm13}, for each $1\le k \le p$,
$(b-1)^{n/2}(W_{n}(w_{|k})-1)$ converges in law to a copy
$\sqrt{U(w_{|k})}\xi(w_{|k})$ of $\sqrt{U}\xi$, as well as
$(b-1)^{(n+1)/2}(W_{n+1}(w)-1)$ to such a $\sqrt{\widetilde
  U(w)}\widetilde \xi(w)$. Due to the independence properties of the
random variables defining $\nu_n$, we get the desired conclusion.

\begin{remark}
It seems not clear at the moment to associate a functional CLT with a
general distribution for $N$.
\end{remark}

\subsection{A Functional CLT for random functions on [0,1]}
 We still assume that we are in the quadratic case. Moreover, we
 assume that $\mathscr{T}$ is a $c$-adic tree, with $c\ge 2$ (this
 means that $a_j=0$ for $j>c$); it is easily seen that one must have
 $c\ge b$.  If $w\in \mathscr{T}$, the closed $c$-adic interval
 naturally encoded by $w$ is denoted $I_w$, and given a complex or
 $\mathbb R^2$-valued function $f$ defined over $[0,1]$, the increment
 of $f$ over $I_w$ is denoted $\Delta (f,I_w)$.

\medskip

Suppose that $q=1$ and the martingale $(U_n)_{n\ge 1}$ converges in
$L^{1+\epsilon}$. Then, with the notations of the previous section, it
is direct that for all $p\ge 1$,
$$ \esp \sum_{w\in\mathscr{T}_p} |M(w)|^2= \mathrm{O}\Big (p^2\Big (
\sum_{i=1}^ca_i^2\Big)^p\Big )= \mathrm{O}(c^{-p\gamma})
$$ for some $\gamma>0$. It follows that $\sup_{w\in \mathscr{T}_p}
|M(w)|$ tends to 0 exponentially fast. Consequently, the process $F$
defined on the $c$-adic numbers of $[0,1]$ by $F(0)=0$ and $\Delta
(F,I_w)=M(w)$ (this definition is consistent since $M$ is a measure)
extends to a unique complex-valued H\"older continuous function over
$[0,1]$, still denoted $F$.

\medskip

Now suppose $b>2$, $\gamma\in (1,\beta)$ and
$\mu\in\mathscr{P}_\gamma$. At first, the complex-valued process
defined for each $n\ge 1$, by $G_n(0)=0$ and $\Delta
(G_n,I_w)=\nu_n(w)$ for each $w\in \mathscr{T}$ can be shown to extend
to a unique H\"older continuous function (see \cite[Theorem
  2.1]{BJM}), hence if $\mu\neq\delta_{1}$, the relations $F_n(0)=0$
and $\Delta (F_n,I_w)=\displaystyle (b-1)^{(n+1)/2}(\nu_n(w)-\nu(w))$
define a unique H\"older continuous function.

\begin{theorem}\label{CLTFunc}
Suppose that either the assumptions of Theorem~\ref{thm16} are
fulfilled or those of Theorem~\ref{thm13} are fulfilled if $W_0$ is real valued. Suppose also that $\mathscr{T}$ is a $c$-adic tree. The sequence $(F_n)_{n\ge 1}$
converges in law to $F$ as $n\to\infty$.
\end{theorem}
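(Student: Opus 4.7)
\emph{Proof plan.} Convergence in law of $(F_n)_{n\ge 1}$ to $F$ in $C([0,1])$ will follow from the classical recipe: (i)~convergence of finite-dimensional distributions on a dense subset, together with (ii)~tightness in the uniform topology. The first step is essentially given by Theorem~\ref{CLT for measures}; the real work is to establish a uniform H\"older modulus bound for the sequence $(F_n)$.

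\emph{Finite-dimensional distributions.} For each $p\ge 1$ and each $c$-adic point $t=k/c^p\in[0,1]$, $F_n(t)$ is a fixed linear combination of the increments $(\Delta(F_n,I_w))_{w\in\mathscr{T}_p}$, while the same linear combination of $(M(w))_{w\in\mathscr{T}_p}$ gives $F(t)$. Theorem~\ref{CLT for measures} provides exactly the joint convergence in law of the first family to the second, for every $p$. Hence the finite-dimensional distributions of $F_n$ at arbitrary $c$-adic points converge to those of $F$. Since the $c$-adic numbers are dense in $[0,1]$ and both $F_n$ and $F$ are continuous, any weak subsequential limit of $(F_n)$ in $C([0,1])$ agrees in law with $F$.

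\emph{Tightness.} Tightness of $(F_n)$ in $C([0,1])$ will follow from a uniform H\"older-modulus estimate along the lines of \cite[Theorem~2.1]{BJM}. Using the $c$-adic structure, it suffices to find $r\ge 2$, $\alpha>0$, and $C<\infty$ independent of $n$ such that
\[
\sum_{w\in\mathscr{T}_p}\esp|\Delta(F_n,I_w)|^r\le C\,c^{-p\alpha}\qquad(p\ge 1).
\]
To obtain this, decompose
\[
\Delta(F_n,I_w)=(b-1)^{(n+1)/2}\prod_{k=0}^{|w|-1}a_{w_{k+1}}(w|_k)\cdot R_n(w),\quad R_n(w)=W_{n+1}(w)\prod_{k=1}^{|w|}W_n(w|_k)-1,
\]
and use that the $a(w|_k)$-factors are independent of $W_{n+1}(w)$ and of the $W_n(w|_k)$ so that the expectation factors. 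As in the proof of Theorem~\ref{CLT for measures}, $R_n(w)$ writes as a sum of $|w|+1$ terms of the form $\eta\cdot(W_{\cdot}-1)$ with $\eta$ a product of $W_n$'s uniformly bounded in $L^r$, and Corollary~\ref{cor18} (in either the real or complex version) provides $\|W_n-1\|_r=\mathrm{O}(\sigma_n)=\mathrm{O}((b-1)^{-n/2})$. Combined with \eqref{ecarttype}, Minkowski and H\"older then yield $(b-1)^{r(n+1)/2}\esp|R_n(w)|^r\le C(|w|+1)^r$ uniformly in $n$, so that summing in $w$ produces the bound $C(p+1)^r u_r^{-p}$, where $u_r^{-1}=\esp\sum_{j=1}^c a_j^r$.

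\emph{Main obstacle.} The decisive step is therefore to prove $u_r>c^\beta$ for some $\beta>0$ and some admissible $r\in(2,3]$, so that the polynomial factor $(p+1)^r$ is absorbed by the exponential decay $u_r^{-p}$. Since $u_2=b$ and $c\ge b$, $u_2\ge 1$, and strict convexity of $r\mapsto\log u_r$ (together with the non-triviality of $a$ and the assumption $u=u_3<\infty$) allows one to leverage the sharp moment hypothesis $(u-1)^2>(b-1)^3$ of Theorems~\ref{thm13}--\ref{thm16} to produce the required inequality. An analogous computation for $\esp|M(w)|^r$, starting from the explicit expression of $M(w)$ and using that $U\in L^{1+\varepsilon}$ (via Lemma~\ref{martingale2} and Corollary~\ref{smallterms}), yields H\"older regularity of the limit $F$ and vindicates the tightness. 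Combining the resulting tightness with Step~1 via Prokhorov closes the argument.
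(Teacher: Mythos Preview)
Your overall architecture matches the paper's: reduce to finite-dimensional convergence (supplied by Theorem~\ref{CLT for measures}) plus tightness, and establish the latter by controlling $\sum_{|w|=p}\esp|\Delta(F_n,I_w)|^r$ uniformly in~$n$. The paper, however, carries this out with $r=2$, which makes your ``main obstacle'' evaporate. Since $q=1$ one has $u_2=b>2$, and the same telescoping you describe gives $\sum_{|w|=p}\esp|\Delta(F_n,I_w)|^2=\mathrm{O}\bigl(p^2\, b^{-p}\bigr)$ uniformly in~$n$, which is already $\mathrm{O}(c^{-p\gamma})$ for any $0<\gamma<\log b/\log c$. No third-moment input (Corollary~\ref{bound3} or its complex analogue) is needed for tightness---that hypothesis has already done its job inside Theorems~\ref{thm13} and~\ref{thm16}. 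The paper then converts the $L^2$ sum bound into a modulus-of-continuity estimate via Chebyshev and the standard chaining inclusion, exactly as in~\cite{BPW}.

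Your detour through $r\in(2,3]$ would also succeed---concavity of $r\mapsto\log u_r$ together with $u_2=b>2$ and $u_3=u>2$ yields $u_r>1$ throughout $[2,3]$, so the ``obstacle'' is illusory in any case---but it imports machinery (the $L^3$ corollaries, a convexity argument) that the second-moment route sidesteps entirely.
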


In \cite{BPW} we obtained this result when $a_j=c^{-1}$ for all $j$,
in which case $U=1$ almost surely, and with $\mu$ supported on
$\mathbb R_+$, which implies that $y=z=0$ and so $F$ is real valued.

\begin{proof} 
Due to Theorem~\ref{CLT for measures} we only have to prove the
tightness of the sequence of laws of the functions $F_n$, $n\ge
1$. This is quite similar to the proof of Proposition~9 in~\cite{BPW},
but for reader's convenience we include some details. The same
arguments as those used to prove Theorem~\ref{CLT for measures} imply
that for all $n\ge 1$, for all $p\ge 1$
$$ \esp\Big (\sum_{w\in\mathscr{T}_p} |\Delta (F_n,I_w)|^2\Big)=
  \mathrm{O}\Big (p^2\Big ( \sum_{i=1}^ca_i^2\Big)^p\Big )=
  \mathrm{O}(p^2c^{-p \gamma}),
$$
where $ \mathrm{O}$ is uniform with respect to $n$. For any $t>0$ this yields 
$$ \mathbb P(\exists \, w\in \mathscr T_p,\ |\Delta (F_n,I_w)|\ge t
\,c^{-p\gamma/4})\le \mathrm{O}(t^{-2} p^2c^{-p \gamma/2}).$$ Let
$\omega(F_n,\cdot)$ stand for the modulus of continuity of $F_n$.  Fix
$\varepsilon>0$. It is standard that if $\delta\in (0,1)$ and
$p_\delta=-\log_c (\delta)$,
\begin{eqnarray*}
\bigl\{\omega( F_n,\delta)\ge 2(c-1)\,\varepsilon \bigr\}
&\subset& \left\{ \sum_{p\ge p_\delta} \sup_{w\in \mathscr T_p}
\Delta(F_n,I_w) > \varepsilon\right\}\\
&\subset& \bigcup_{p\ge p_\delta}\left\{ \sup_{w\in  \mathscr T_p}
\Delta(F_n,I_w) >
(1-c^{-\gamma/4})\,c^{p_\delta \gamma/4}\,\varepsilon\,c^{-p\gamma/4}\right\},
\end{eqnarray*}
so 
$$ \mathbb P(\omega( F_n,\delta)\ge
2(c-1)\,\varepsilon)=\mathrm{O}\left (\frac{c^{-p_\delta
    \gamma/2}}{(1-c^{-\gamma/4})^2\varepsilon^2}\sum_{p\ge p_\delta}
p^2c^{-p \gamma/2}\right )
$$
uniformly in $n\ge 1$. Consequently, 
\begin{equation*}
\lim_{\delta\to 0} \sup_{n\ge 1} \mathbb{P}\bigl(\omega
(F_n,\delta)>2(c-1)\,\varepsilon\bigr) =0,
\end{equation*}
which yields the desired tightness (see \cite{Bil}).

\end{proof}

\subsection{Multifractal analysis of the increments of the limit
  process $F$} We work under the assumptions of the previous section
defining $F$ as a non trivial H\"older continuous function.  At first
we notice that
$$ \Delta(F,I_w)=\nu(w)\, \Big (\sqrt{\widetilde U(w)} \widetilde
\xi(w)+ \sqrt{b-1}\sum_{j=1}^n\sqrt{U(w_{|j})}\xi( w_{|j})\Big ).
$$ To simplify the purpose, we assume that the $a_i$, $i\ge 1$ do not
vanish almost surely. If $x\in (0,1)$, denote by $w_n(x)$ the $c$-adic
word $w$ of generation $n$ encoding the unique semi-open to the right
$c$-dic interval which contains $x$. The sequence $\displaystyle\Big
(\frac{\log(\nu(w_n(x)))}{n},\frac{\Delta(F,I_{w_n(x)}))}{\sqrt{b-1}n
  \, \nu(w_n(x))}\Big )$, $n\ge 1$, provides a fine description of the
asymptotic behavior of $\Delta(F,I_{w_n(x)})$. It is essentially the
$\mathbb R^3$-valued branching random walk with independent components
associated with the random vectors
$(\log(a_i(w)),\sqrt{U(wi)}\xi(wi))_{1\le i\le c}$, $w\in\mathscr{T}$.

\medskip

For all subset $K$ of $\mathbb{R}^2$ set 
$$ E(K)= \left\{x\in (0,1): \bigcap_{N\ge 1}\overline{\Big \{\Big
  (\frac{\log(\nu(w_n(x)))}{n},\frac{\Delta(F,I_{w_n(x)}))}{\sqrt{b-1}n\,
    \nu(w_n(x))}\Big ):n\ge N\Big\}}=K\right\}.
$$

For $(q_1,{q}_2)\in\mathbb R\times \mathbb R^2$, set 
$$
P(q_1,{q}_2)=\log\Big (\esp \sum_{i=1}^c a_i^{q_1}\Big ) + \log \esp
e^ {\langle {q}_2 |\sqrt{U}\xi\rangle},
$$
and for $(\gamma_1,\mathbf{\gamma}_2)\in\mathbb R\times \mathbb R^2$, let 
$$ P^*(\gamma_1,\gamma_2)=\inf\{P(q_1,q_2)-\gamma_1q_1-\langle
\gamma_2|q_2\rangle: (q_1,q_2)\in\mathbb R\times \mathbb R^2\}
$$ 
be the concave Legendre transform of $P$ at $(\gamma_1,\gamma_2)$. 


\medskip

As a consequence of the general study of the multifractal behavior of
vector valued branching random walks achieved in \cite{AB}, we have:
\begin{theorem}
With probability 1, for all compact connected subsets $K$ of
$\mathbb{R}^3$, we have
$$
\dim E(K)= \frac{1}{\log(c)} \inf\{P^*(\gamma): \gamma\in K\},
$$ where $\dim$ stands for the Hausdorff dimension and a negative
dimension means that $E(K)$ is empty.  \end{theorem}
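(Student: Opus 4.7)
The plan is to reduce the problem to the multifractal analysis of a vector-valued branching random walk on $\mathscr{T}$ and then invoke the general result of \cite{AB}. First, from the decomposition $\Delta(F,I_w)=\nu(w)\bigl(\sqrt{\widetilde U(w)}\widetilde\xi(w)+\sqrt{b-1}\sum_{j=1}^n\sqrt{U(w|_j)}\xi(w|_j)\bigr)$ recalled at the start of the subsection, and from the identity $\log\nu(w_n(x))=\sum_{k=0}^{n-1}\log a_{w_{k+1}}(w_n(x)|_k)$ (which holds since $\nu$ is the conservative Mandelbrot measure associated with $a$), the $\mathbb R^3$-valued pair defining $E(K)$ equals
\begin{equation*}
\left(\frac{1}{n}\sum_{k=0}^{n-1}\log a_{w_{k+1}}(w_n(x)|_k),\ \frac{1}{n}\sum_{j=1}^{n}\sqrt{U(w_n(x)|_j)}\,\xi(w_n(x)|_j)\right)+\mathrm{o}(1),
\end{equation*}
where the $\mathrm{o}(1)$ error is $(\sqrt{b-1}\,n)^{-1}\sqrt{\widetilde U(w_n(x))}\widetilde\xi(w_n(x))$. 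A first--Borel--Cantelli argument applied to $\max_{|w|=n}|\sqrt{\widetilde U(w)}\widetilde\xi(w)|$, using the $L^{1+\varepsilon}$-boundedness of $(U_n)_{n\ge 1}$ combined with the Gaussian tails of $\widetilde\xi$ conditionally on $\widetilde U$, shows this error tends to $0$ uniformly in $x$ almost surely, so it does not modify the cluster set appearing in the definition of $E(K)$.

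Next I recognise this as the Cesaro average $n^{-1}S_n$ of the $\mathbb R^3$-valued branching random walk on the $c$-adic tree $\mathscr{T}$ whose step from a node $w$ to its $i$-th child is $S_i(w)=\bigl(\log a_i(w),\,\sqrt{U(wi)}\,\xi(wi)\bigr)$, the environments $\{(a(w),(U(wi),\xi(wi))_{1\le i\le c})\}_{w\in\mathscr{T}}$ being independent and identically distributed. Using the independence of $a(w)$ from the $(U(wi),\xi(wi))_i$ and the identical distribution across children, the annealed log-Laplace transform of a single generation reads
\begin{equation*}
\log\esp\sum_{i=1}^{c}e^{q_1\log a_i(w)+\langle q_2|\sqrt{U(wi)}\xi(wi)\rangle}=\log\esp\sum_{i=1}^c a_i^{q_1}+\log\esp e^{\langle q_2|\sqrt{U}\xi\rangle}=P(q_1,q_2),
\end{equation*}
so the pressure $P$ of the statement is exactly the one that governs the large deviations of $n^{-1}S_n$ along random branches of $\mathscr{T}$.

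The theorem then follows from the general multifractal analysis of vector-valued branching random walks obtained in \cite{AB}, which gives the Hausdorff dimension of the cluster-set level set for any compact connected $K$, on a single probability-one event, as $(\log c)^{-1}\inf_{\gamma\in K}P^*(\gamma)$. The hypotheses to verify are standard: finiteness of $P$ on an open neighborhood of the origin, which holds because $\esp\sum_i a_i^{q_1}<\infty$ near $q_1=1$ by the integrability assumed throughout, and $\esp e^{\langle q_2|\sqrt U\xi\rangle}=\esp e^{\frac12\langle Aq_2|q_2\rangle U}<\infty$ for $q_2$ small since $U\in L^{1+\varepsilon}$; together with the supercriticality of the branching, guaranteed by $c\ge 2$ and the almost-sure positivity of the $a_i$.

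The main obstacle is establishing the result uniformly over all compact connected $K$ on a single almost sure event rather than for each $K$ separately. The connectedness of $K$ is essential: it ensures that the standard two-sided argument — Chernoff-type covering for the upper bound, and construction of tilted inhomogeneous Mandelbrot measures (Gibbs-like measures of the form $\prod_j e^{\langle q|S_{w_{j+1}}(w_{|j})\rangle-P(q)}$, with $q$ varying in a compact range corresponding to $K$) concentrated on $E(\{\gamma\})$ for each $\gamma\in K$ for the lower bound — yields the exact Legendre-transform value; the uniformity in $K$ is then obtained by a countable dense approximation argument, as carried out in \cite{AB}.
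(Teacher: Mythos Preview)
Your approach coincides with the paper's: the paper offers no proof beyond the sentence preceding the statement, which identifies the pair $\bigl(\log\nu(w_n(x))/n,\ \Delta(F,I_{w_n(x)})/(\sqrt{b-1}\,n\,\nu(w_n(x)))\bigr)$ as ``essentially the $\mathbb R^3$-valued branching random walk with independent components associated with the random vectors $(\log(a_i(w)),\sqrt{U(wi)}\xi(wi))_{1\le i\le c}$'', and then declares the theorem a consequence of~\cite{AB}. Your reduction to this branching random walk, the computation of $P$, and the appeal to~\cite{AB} are exactly this argument fleshed out.

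One point in your hypothesis check is wrong, however. You assert that $\esp\, e^{\frac12\langle Aq_2,q_2\rangle U}<\infty$ for small $q_2$ ``since $U\in L^{1+\varepsilon}$''. Integrability in $L^{1+\varepsilon}$ does not give any exponential moment; in fact the Remark immediately following the theorem in the paper makes explicit that in regime~(1) (namely when $((b-1)^p+1)\esp\sum_{j}a_j^{2p}\ge 1$ for some $p>1$) one has $\esp\, e^{rU}=\infty$ for every $r>0$, so the domain of $P$ collapses to $\mathbb R\times\{(0,0)\}$. The theorem of~\cite{AB} is formulated so as to accommodate such restricted effective domains, hence the conclusion survives, but your stated justification for finiteness of $P$ near the origin does not. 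The same caution applies to your Borel--Cantelli control of $\max_{|w|=n}|\sqrt{\widetilde U(w)}\widetilde\xi(w)|$: with only $L^{1+\varepsilon}$ moments on $U$ the tail of $\sqrt{U}\xi$ can be polynomial, and a union bound over $c^n$ nodes then fails; a sharper argument (or an appeal to how \cite{AB} handles such perturbations) is needed there.
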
 This is a non
trivial extension of the result obtained in \cite{BPW} in the case
where $a_j=c^{-1}=b^{-1}$ for all $j$ (hence $\nu$ is the Lebesgue
measure and $U=1$) and $\mu$ is supported on $\mathbb R_+$, which
implies that the multifractal analysis reduces back to that of a
centered Gaussian branching random walk in $\mathbb R$.

\begin{remark}
It is worth specifying that $\esp e^ {\langle {q}_2
  |\sqrt{U}\xi\rangle}|U=e^{UQ(q_2)}$, where $Q$ is a non-negative
non-degenerate quadratic form which is positive definite if and only
if $A$ is invertible. Moreover, we saw in Section~\ref{clt61} that $A$
is  invertible  if and only if $\mathbb P(W\in\mathbb
C\setminus\mathbb R)>0$.
In addition,  by \cite[Theorem 2.1]{Liu96} one
has three situations for the behavior of the moment generating
function of $U$:

(1) there exists $p>1$ such that $\esp U^p=\infty$,
i.e. $((b-1)^p+1)\esp \sum_{i\ge 1}a_j^{2p}\ge 1$. Then, for $r\ge 0$
one has $\esp e^{rU}<\infty$ only if $r=0$. In particular, if $Q$ is
positive definite, then the domain of $P$ reduces to $\mathbb
{R}\times\{(0,0)\}$.

(2) If $\esp U^p<\infty$ for all $p\ge 1$ and
$\mathrm{ess\,sup}((b-1)^{p_0}+1)\sum_{i= 1}^ca_j^{2p_0}<1$ for some
$p_0>1$, then $\esp e^{rU}<\infty$ for all $r\ge 0$, and the domain of
$P$ is $\mathbb R^3$.

(3) If $\esp U^p<\infty$ and $\mathrm{ess\,sup}((b-1)^{p}+1)
\sum_{i\ge 1}a_j^{2p}\ge 1$ for all $p\ge 1$, then $\esp
e^{rU}<\infty$ for some $r>0$, hence the domain of $P$ contains
$\mathbb{R}\times V$, where $V$ is a neighborhood of $(0,0)$.

\end{remark}

\section{Final remarks on fixed points of nonlinear smoothing
  transformations} 

One can wonder whether $\smooth_2$ may have fixed points in the space
$\mathcal P_+$ of probability measures on $\mathbb R_+$ with infinite
first moment, like $\smooth_1$ does (see
\cite{DuLi,liu98,BiKy3,AlBiMe}. It turns out that this is not the case
under mild conditions. Suppose that $\#\{i\ge 1: a_i>0\}$ is
bounded. Let $\mu\in \mathcal P_+$ be a fixed point of $\smooth_2$
with $ \moment_1(\mu)=\infty$. Write the equality $\mu=\smooth_2
(\mu)$ in the form $Y=\sum_{i\ge 1} (a_i Y_i)\widetilde Y_i$, so that
$\mu$ is a fixed point of the mapping $\smooth_1$ defined with
$(\widetilde a_i=a_i Y_i)_{i\ge 1}$. We can use the theory of the
fixed point of $\smooth_1$ \cite{DuLi,AlMe12} to claim that there must
exist a unique $\alpha\in (0,1]$ such that $\esp \sum_{i\ge 1}
a_i^\alpha Y_i^\alpha=1$ and $\esp \sum_{i\ge 1} a_i^\alpha Y_i^\alpha
\log (a_i^\alpha Y_i^\alpha)\le 0$. In particular, $\esp
Y^\alpha<\infty$, so $\alpha<1$. Moreover, there exists a random
variable $Z$, namely a fixed points of the mapping $\smooth_1$ defined
with $(a_i^\alpha Y_i^\alpha)_{i\ge 1}$, such that
$Y=\mathscr{L}(Z^{1/\alpha} X)$, where $X$ is a positive stable law of
index $\alpha$. In particular, $\esp Y^\alpha=(\esp Z)\esp
X^\alpha=\infty$, which is a contradiction.\medskip

Let us close the paper with a natural question: under what necessary
and sufficient condition on $(a_i)_{i\ge 1}$ does the  martingale
$(Z_n)_{n\ge 1}$ of Section~\ref{existence} converge to a non
degenerate random variable~$Z_\infty$?  

At least we know that $\esp Z_\infty=(\esp Z_\infty)^2$, hence in case
of non degeneracy the convergence holds in $L^1$.

\end{document}